\DeclareMathAlphabet{\pazocal}{OMS}{zplm}{m}{n}
\newcommand{\Z}{\mathbbm Z}
\newcommand{\N}{\mathbbm N}
\newcommand{\R}{\mathbbm R}
\newcommand{\Rn}{\mathbbm R^n_x}
\newcommand{\eps}{\varepsilon}
\newcommand{\la}{\left\langle}
\newcommand{\ra}{\right\rangle}
\newcommand{\ral}{\right\rangle_{L^2}}
\newcommand{\Lt}{{L^2}}
\newcommand{\Lto}{{L^2}}
\newcommand{\Dn}{\Delta_\nu}
\newtheorem{thm}{Theorem}[section]
\newtheorem{prop}[thm]{Proposition}
\newtheorem{lem}[thm]{Lemma}
\newdefinition{rem}[thm]{Remark}
\DeclareMathOperator{\supp}{supp}
\DeclareMathOperator{\real}{Re}
\DeclareMathOperator{\Lip}{Lip}
\DeclareMathOperator{\Log}{Log}
\DeclareMathOperator{\spec}{spec}
\numberwithin{equation}{section}
\author[add1,email1]{D. Del Santo}
\address[add1]{Dipartimento di Matematica e Geoscienze\\
Universit\`a degli Studi di Trieste\\
Via Valerio 12/1, I-34127 Trieste, Italy}
\author[add2,email2]{Ch. P. J\"ah}
\address[add2]{Institut f\"ur Angewandte Analysis\\
Fakult\"at f\"ur Mathematik und Informatik\\
Technische Universit\"at Bergakademie Freiberg\\
Pr\"uferstrasse 9, D-09596 Freiberg, Germany}
\author[add1,email3]{M. Prizzi}
\begin{document}

\begin{frontmatter}

\title{Conditional stability for backward parabolic equations with $\Log\Lip_t \times \Lip_x$-coefficients}

\begin{abstract}
In this paper we present an improvement of [Math. Ann. {\bf 345} (2009), 213--243], where the authors proved a result concerning continuous dependence for backward parabolic operators whose coefficients are Log-Lipschitz in $t$ and $C^2$ in $x$. The $C^2$ regularity with respect to $x$ had to be assumed for technical reasons. Here we remove this assumption, replacing it with Lipschitz-continuity. The main tools in the proof are Littlewood-Paley theory and Bony's paraproduct as well as a result of Coifman and Meyer [Ast\'erisque {\bf 57}, 1978, Th. 35].
\end{abstract}

\begin{keyword}
Conditional stability, backward parabolic equation, low regularity coefficients, weighted energy estimate, Bony's paraproduct \\ \hfill \\
\MSC[2010] 35B30 \sep 34A12 \sep 35A02 
\end{keyword}

\end{frontmatter}

\setcounter{tocdepth}{2}

\section{Introduction}
In this paper we study the continuous dependence of solutions to the Cauchy problem for 
a backward parabolic operator, namely 
\begin{equation} 
\label{BWop}
Pu = \partial_t u + \sum\limits_{j,k=1}^n \partial_{x_j}(a_{jk}(t,x)\partial_{x_k}u) = 0
\end{equation}
on the strip $[0,T] \times \R^n_x$ 
with data
\begin{equation} 
\label{BWdata}u(0,x)= u_0(x)
\end{equation}
 in $\R^n_x$.
 The coefficients are supposed to be measurable, real valued and bounded. The matrix $(a_{jk})_{j,k=1,\dots, n}$ is symmetric and positive definite, i.e.
 there exists $\kappa>0$ such that
 \begin{equation} 
 \label{ellipticity}
\sum\limits_{j,k=1}^n a_{jk}(t,x)\xi_j\xi_k\geq  \kappa|\xi|^2,
\end{equation}
for all $\xi\in \R^n_\xi$.

It is well known that the Cauchy problem (\ref{BWop}), (\ref{BWdata}) it is not well--posed in the sense of Hadamard \cite{Hadamard1952}, \cite{Hadamard1964}. On the one side the smoothing effect of the parabolic operator prevent to have existence results in any reasonable function space and on the other side (relatively) elementary examples show that also uniqueness is not valid without additional assumptions on the solutions and on the operator (see \cite{Tychonov}; for a more precise discussion on uniqueness of the solutions to the Cauchy problem for a backward parabolic equation we quote the papers \cite{Miller1973}, \cite{Mandache}, \cite{DSP2005}, \cite{DSP2012}, \cite{DSJP2013}).

In the celebrated paper \cite{John1960},  John introduced the notion of well--behaved problem in which also not well--posed problem can be included: roughly speaking a problem is well-behaved if its solutions in a space $\pazocal H$ depend 
continuously on the data belonging to a space $\pazocal K$, provided they satisfy a prescribed bound in possibly another space  $\pazocal H'$.

The well-behavedness for (\ref{BWop}), (\ref{BWdata}) in the space 
\begin{equation}
\label{H}
\pazocal H=C^0([0,T],L^2(\R^n_x))\cap C^0([0,T),H^1(\R^n_x))\cap C^1([0,T),L^2(\R^n_x))
\end{equation} 
with continuous dependence with respect to the data in $L^2(\R^n_x)$, can be deduced from the so called logarithmic convexity of the norm of the solutions to (\ref{BWop}),
as proved by Agmon and Nirenberg in \cite{AN1963}. A similar result was obtained by Glagoleva in \cite{Glagoleva1963} and in a more precise and general form by Hurd in \cite{Hurd1967}. Hurd's result can be summarized as follows:

\smallskip
\textit{suppose that the coefficients $a_{jk}$ are Lipschitz--continuous; for every $T' \in (0,T)$ and $D>0$, there exist $\rho>0$, $\delta \in (0,1)$ and $M >0$ such that if $u \in \pazocal H$ ($\pazocal H$ defined in (\ref{H})),  is a solution of $Pu \equiv 0$ on $[0,T]\times \R^n_x$ with $\|u(0,\cdot)\|_{L^2} \leq \rho$ and $\|u(T,\cdot)\|_{L^2} \leq D$, then \begin{equation} \label{illustration3}
\sup\limits_{t \in [0,T']} \|u(t,\cdot)\|_{\Lt} \leq M \|u(0,\cdot)\|_{\Lt}^\delta,
\end{equation} where the constants $\rho$, $M$ and $\delta$ depend only on $T'$, $D$, the ellipticity constant of $P$ and the Lipschitz constant of the coefficients with respect to $t$.}
\smallskip

Hurd's proof relies on some (complicate) energy estimates and it turns out that Lipschitz--continuity of the coefficients $a_{jk}$ is an essential requirement.

In the present paper we are interested in relaxing the regularity hypothesis on the coefficients $a_{jk}$. Our starting point are the results contained in  \cite{DSP2009}. In that paper  an example showed that if the coefficients $a_{jk}$ are not Lipschitz--continuous in time then the estimate (\ref{illustration3}) does not hold in general, and if the coefficients are $\log$--Lipschitz--continuous in time then a condition weaker than  (\ref{illustration3}) is valid. However, in order to obtain this weaker estimate, a technical difficulty imposed to assume the $C^2$--regularity with respect to the space variables. Here we  overcome this point and we remove this supplementary and not natural requirement. Our result is the following:

\smallskip
\textit{suppose that the coefficients $a_{jk}$ are Lipschitz--continuous with respect to  $x$ and log--Lipschitz--continuous with respect to $t$; for every $T' \in (0,T)$, $D>0$ and $s\in (0,1)$, there exist $\rho > 0$, $\delta \in (0,1)$ and $M$, $N>0$ such that if $u \in \pazocal H$ is a solution of $Pu \equiv 0$ on $[0,T] \times \R^n_x$ with $\|u(0,\cdot)\|_{L^2} \leq \rho$ and $\|u(T,\cdot)\|_{L^2} \leq D$,  then \begin{equation*}
\sup\limits_{t \in [0,T']} \|u(t,\cdot)\|_{L^2} \leq M \exp\left(-N|\log(\|u(0,\cdot)\|_{H^{-s}})|^\delta\right),
\end{equation*} where the constants $\rho$, $M$, $N$ and $\delta$ depend only on $T'$, $D$, $s$, the ellipticity constant of $P$, the Lipschitz constant of the coefficients with respect to $x$ and  the Log-Lipschitz constant of the coefficients with respect to $t$.}
\smallskip

The main tool in proving this statement is Bony's paraproduct (see \cite{Metivier}) and a theorem by Coifman and Meyer \cite[Th. 35]{CM1978}, which makes the estimate of a commutator more effective.

\paragraph{Outline of the content} In Sub--Section \ref{subsec:MainRes}, we state our main theorems and make some remarks regarding the comparison with the results of \cite{DSP2009}.

In Sub--Section \ref{sec:LPBPP}, we present elements of the Littlewood-Paley theory and we develop the necessary machinery of Bony's paraproduct for our proof. After that we proof auxiliary estimates that will be crucial for the proof of our weighted energy estimate in Sub--Sections \ref{sec:Aux1} and \ref{sec:Aux2}. Some proofs are shifted to the appendix in order to make the main results easier to read.

In Section \ref{sec:MainProof}, we prove a weighted energy estimate for solutions of \eqref{BWeq} from which the stability result Theorem \ref{thm:Stab} follows. The derivation of the stability result from the weighted energy estimate is shown in Section \ref{sec:Derivation}.

\section{Results} \label{sec:MainRes}               

\subsection{Notation}

We consider the backward-parabolic equation \begin{equation} \label{BWeq}
Pu = \partial_t u + \sum\limits_{j,k=1}^n \partial_{x_j}(a_{jk}(t,x)\partial_{x_k}u) = 0
\end{equation} on the strip $[0,T] \times \R^n_x$. We suppose that \begin{itemize}
\item for all $(t,x) \in [0,T] \times \R^n_x$ and for all $j,k = 1, \dots, n$, \begin{equation*}
a_{jk}(t,x) = a_{kj}(t,x);
\end{equation*}
\item there exists an $\kappa \in (0,1)$ such that for all $(t,x,\xi) \in [0,T] \times \R^n_x \times \R^n_\xi$, \begin{equation}
\kappa |\xi|^2 \leq \sum\limits_{j,k=1}^n a_{jk}(t,x)\xi_j \xi_k \leq \frac{1}{\kappa} |\xi|^2;
\end{equation}
\item for all $j,k=1,\dots,n$, $a_{jk} \in \Log\Lip([0,T],L^\infty(\R^n_x)) \cap L^\infty([0,T], \Lip(\R^n_x))$. 
\end{itemize}
We set \begin{eqnarray*}
&& A_{LL} := \sup\Big\{ \frac{|a_{jk}(t,x)-a_{jk}(s,x)|}{|t-s||1+\log|t-s||} : j,k = 1, \dots, n, \\
&& \qquad \qquad \qquad\qquad\qquad\qquad \qquad (t,s,x) \in [0,T]^2 \times \R^n_x, \, 0 < |s-t| \leq T  \Big\}, \\[0.3 cm]
&& A := \sup\{ \|\partial_x^\alpha a_{jk}(t,\cdot)\|_{L^\infty} : |\alpha| \leq 1, \, t \in [0,T] \}.
\end{eqnarray*}

\begin{rem} \label{rem:LowerOrder} If one would like to include lower order terms in \eqref{BWeq}, one has to suppose that those are $L^\infty$ with respect $t$ and also $\Lip$ with respect to $x$. The constants will then additionally depend on constants $B$ and $C$ similary defined to $A$. \end{rem}

\begin{rem} We will often use a letter, say $C$, to denote a generic numerical constant; and different appearances of the letter $C$ will not necessarily denote the same numerical constant, even in the same line of text. When a constant actually depends on one of the parameters of the problem, it shall be indicate by an index. Sometimes it might be necessary to differentiate between constants so that we will count them with an upper index.
\end{rem}

\subsection{Main results - stability and weighted energy estimates} \label{subsec:MainRes} 

We denote by \begin{equation*}
\pazocal H := C^0([0,T], L^2(\R^n_x)) \cap C^0([0,T),H^{1}(\R^n_x)) \cap C^1([0,T), L^2(\R^n_x))
\end{equation*} the space of solutions of \eqref{BWeq} for which we prove the stability result.

First we restate the precise local stability result of \cite{DSP2009}; we also want to compare the two estimates in the sequel. Keep in mind that in this case the constant $A$ also contains the $L^\infty$ norms of the second spatial derivative of the principal part coefficients.

\begin{thm}{\textbf{(Th. 1 in \cite{DSP2009})}} \label{thm:DSP2009} There exist a positive constant $\alpha_1$ and, setting $\sigma := \min\{T,\frac{1}{\alpha_1}\}$, 
$\bar{\sigma} = \frac{\sigma}{8}$, there exist constants $\rho$, $\delta$, $M$ and $N$, such that, whenever $u \in \pazocal H$ is a solution of \eqref{BWeq} with $\|u(0,\cdot)\|_{L^2} \leq \rho$, the inequality 
$$\sup\limits_{t \in [0,\bar{\sigma}]} \|u(t,\cdot)\|_{L^2} 
\leq M(1+\|u(\sigma, \cdot)\|_{L^2}) \exp(-N(|\log(\|u(0,\cdot)\|_\Lt)|^\delta)
$$
holds true.

The constant $\alpha_1$ depends only on $A_{LL}$, $A$, $\kappa$ and $n$, while the constants $\rho$, $\delta$, $M$ and $N$
depend on $A_{LL}$, $A$, $\kappa$, $n$ and $T$.
\end{thm}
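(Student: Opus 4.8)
The plan is to establish the theorem by a logarithmic-convexity argument of Agmon--Nirenberg type, carried out for a weighted, frequency-localised energy and adapted to the mere $\LL$-regularity in $t$ through a frequency-dependent regularisation of the coefficients. Writing the equation as $\partial_t u = -L(t)u$ with $L(t)u=\sum_{j,k}\partial_{x_j}(a_{jk}\partial_{x_k}u)$ self-adjoint and negative, the ill-posedness manifests as the growth of $H(t):=\|u(t)\|_{\Lt}^2$; the heart of the matter is to show that $\log H$ is convex up to a controlled defect, from which an interpolation (three-lines) bound between $t=0$ and $t=\sigma$ follows. Differentiating twice and using ellipticity and symmetry, the only term obstructing convexity is $\la\dot L(t)u,u\ra=-\sum_{j,k}\la\dot a_{jk}\partial_{x_k}u,\partial_{x_j}u\ra$, and it is precisely this term that is not controllable when $\dot a_{jk}$ carries the logarithmic singularity permitted by the $\LL$ hypothesis.

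To tame $\dot a_{jk}$, I would decompose $u=\sum_\nu\Delta_\nu u$ by Littlewood--Paley theory and, on the block $u_\nu:=\Delta_\nu u$ (frequencies $|\xi|\sim 2^\nu$), replace $a_{jk}(t,\cdot)$ by its mollification $a_{jk}^{(\nu)}(t,\cdot)$ in time at the parabolic scale $2^{-2\nu}$. The $\LL$ bound then yields $\|a_{jk}-a_{jk}^{(\nu)}\|_{L^\infty}\lesssim A_{LL}\,\nu\,2^{-2\nu}$ and $\|\partial_t a_{jk}^{(\nu)}\|_{L^\infty}\lesssim A_{LL}\,\nu$, so the coefficient is now smooth in $t$ at the cost of a single factor $\nu=\log 2^\nu$. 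I would then build an approximate energy $E(t)=\sum_\nu e^{2\beta(t)\,\nu}\|u_\nu(t)\|_{\Lt}^2$ whose $\nu$-dependent weight $e^{2\beta(t)\nu}$ is designed to absorb this logarithmic loss; the role of the weight is exactly to dominate the now bounded-per-block contribution of $\partial_t a^{(\nu)}_{jk}$ when the pieces are summed against it.

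Differentiating $\log(E+\varepsilon)$ twice and integrating by parts, the leading terms reassemble, by the symmetry and ellipticity of $(a_{jk})$, into a convexity inequality of the form $\tfrac{d^2}{dt^2}\log(E(t)+\varepsilon)\ge -C$ on an interval $[0,\sigma]$, with $\sigma=\min\{T,1/\alpha_1\}$ and $\alpha_1$, $C$ depending only on $A_{LL},A,\kappa,n$. The main obstacle, and the real content of the estimate, is the error produced by the localisation: the commutators $\partial_{x_j}[\Delta_\nu,a_{jk}]\partial_{x_k}u$ together with the replacement of $a_{jk}$ by $a_{jk}^{(\nu)}$ must be summed in $\nu$ against the weight and absorbed into the good elliptic term $\kappa\sum_\nu e^{2\beta\nu}\|\nabla u_\nu\|_{\Lt}^2$. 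It is here that the $C^2_x$-regularity is spent: estimating $[\Delta_\nu,a_{jk}]$ with two spatial derivatives to spare gives enough decay in $\nu$ to close the summation. (Replacing this crude commutator estimate by Bony's paraproduct and the Coifman--Meyer theorem, so that only $\Lip_x$ is required, is exactly what the present paper achieves.)

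Integrating the convexity inequality from $0$ to $\sigma$ gives, for $t\in[0,\bar\sigma]$ with $\bar\sigma=\sigma/8$, an interpolation bound $E(t)\le C\,E(0)^{1-\theta}E(\sigma)^{\theta}$ with $\theta=\theta(t)$ bounded away from $1$ (which is why one stays below $\sigma/8$). Undoing the weight, reinstating the regularisation error, and then optimising the free parameter (the threshold in $\nu$, equivalently the slope $\beta$, balanced against the $\LL$ loss) turns the naive Hölder interpolation $r\mapsto r^\theta$ into the sub-Hölder modulus $\exp(-N|\log r|^\delta)$ with $\delta\in(0,1)$, evaluated at $r=\|u(0,\cdot)\|_{\Lt}$; the a priori smallness $\|u(0,\cdot)\|_{\Lt}\le\rho$ keeps us in the range where this optimisation is admissible, while the endpoint energy supplies the factor $(1+\|u(\sigma,\cdot)\|_{\Lt})$. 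This yields the stated inequality with $\rho,\delta,M,N$ depending only on $A_{LL},A,\kappa,n,T$, as claimed.
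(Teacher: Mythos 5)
This theorem is not proved in the present paper at all: it is quoted verbatim from \cite{DSP2009}, and the paper's own contribution (Theorem \ref{thm:Stab}) is proved by a weighted energy estimate, Proposition \ref{thm:Energy}. Measured against that method (which is essentially the method of \cite{DSP2009} as well), your proposal correctly identifies several genuine ingredients: the Littlewood--Paley localisation, the time-mollification of the coefficients at the frequency-dependent scale $\eps=2^{-2\nu}$ with the resulting logarithmic losses $\|\partial_t a^{(\nu)}\|_{L^\infty}\lesssim A_{LL}\,\nu$ and $\|a-a^{(\nu)}\|_{L^\infty}\lesssim A_{LL}\,\nu\,2^{-2\nu}$, a frequency weight $e^{2\beta(t)\nu}$ implementing a loss of derivatives, and the role of the $C^2_x$ hypothesis in the commutator estimates.

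The gap is at the central step. The inequality $\frac{d^2}{dt^2}\log(E(t)+\eps)\geq -C$ with $C$ depending only on $A_{LL}$, $A$, $\kappa$, $n$ cannot hold for an energy comparable to $\|u(t,\cdot)\|_{L^2}^2$: integrated, it would give the H\"older estimate \eqref{illustration3}, and \cite{DSP2009} exhibits a counterexample showing that \eqref{illustration3} fails for Log-Lipschitz coefficients (even $x$-independent ones). Concretely, the mollification errors produce per-block terms of size $\nu\,2^{2\nu}\|u_\nu\|^2$; the elliptic term supplies only $\kappa\,2^{2\nu}\|u_\nu\|^2$ and the derivative of your frequency weight only $|\dot\beta|\,\nu\,\|u_\nu\|^2$, so neither absorbs them, and no uniform convexity defect survives. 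The missing idea is the \emph{time} weight $e^{-\beta\Phi_\lambda\left(\frac{t+\tau}{\beta}\right)}$, where $\Phi_\lambda$ solves the nonlinear ODE \eqref{eq:DefPhi} built from the Log-Lipschitz modulus $\mu(s)=s(1+|\log s|)$: for $\nu\gtrsim\Phi_\lambda'$ the bad terms are absorbed by the good term $-\alpha\nu 2^{2\nu}\|v_\nu\|^2$ generated by the interaction of the loss of derivatives with ellipticity, while for $\nu\lesssim\Phi_\lambda'$ they are absorbed by $\frac{t+\tau}{\beta}\Phi_\lambda''=-\lambda\Phi_\lambda'(1+|\log\Phi_\lambda'|)$. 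The sub-H\"older modulus $\exp(-N|\log r|^\delta)$ then comes from choosing the free parameter $\beta$ so that $e^{-\beta\Phi_\lambda(\tau/\beta)}=\|u(0,\cdot)\|^{-1}$ and invoking the asymptotics of $\psi_\lambda=\Phi_\lambda'$ (Lemma \ref{weight-properties}); it cannot be recovered, as you suggest, by ``optimising'' a H\"older bound that already held with uniform constants, since such a bound does not exist. You would need to build this weight (or an equivalent device tied to the modulus $\mu$) into the energy before any convexity or interpolation argument can close.
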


Let us stress again that the constants $\alpha_1$, $\rho$,  $\delta$, $M$, $N$ depend also on similar constants $B$ and $C$ if one considers also lower order terms. See Remark \ref{rem:LowerOrder}.

The next results improves Theorem \ref{thm:DSP2009}: now the principal part coefficients are only Lipschitz continuous with respect to $x$.

\begin{thm}{\textbf{Conditional stability (local).}} \label{thm:Stab} Let $s \in (0,1)$. There exist a positive constant 
$\alpha_1$ and, setting $\sigma := \min\{T,\frac{1-s}{\alpha_1}\}$, 
$\bar{\sigma} = \frac{\sigma}{8}$, there exist constants $\rho$, $\delta$, $M$ and $N$, such that, whenever $u \in \pazocal H$ is a solution of \eqref{BWeq} with $\|u(0,\cdot)\|_{H^{-s}} \leq \rho$, the inequality 
 \begin{eqnarray} \label{est:Stab} \begin{aligned}
&\sup\limits_{t \in [0,\bar{\sigma}]} \|u(t,\cdot)\|_{L^2} \\
&\qquad \leq M\Big(1+\frac{1}{\sigma}\max\limits_{t \in [\frac{5}{8}\sigma,\frac{7}{8}\sigma]}\|u(t,\cdot)\|_\Lt\Big) \exp(-N(|\log(\|u(0,\cdot)\|_{H^{-s}})|^\delta)
\end{aligned}\end{eqnarray} holds true.

The constant $\alpha_1$ depends only on $A_{LL}$, $A$, $\kappa$, $s$ and $n$, while the constants $\rho$, $\delta$, $M$ and $N$
depend on $A_{LL}$, $A$, $\kappa$, $s$, $n$ and $T$.
\end{thm}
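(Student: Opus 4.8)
The plan is to run the logarithmic convexity method of Agmon and Nirenberg, applied to a suitably weighted norm of the solution and after regularizing the coefficients in time, the weight being designed to absorb the loss caused by the merely Log-Lipschitz dependence in $t$. First I would perform a Littlewood--Paley decomposition $u = \sum_\nu \Dn u$ and, on the $\nu$-th dyadic block, replace each $a_{jk}$ by a time-mollification $a_{jk}^{(\nu)}$ at scale $\eps_\nu \sim 2^{-\nu}$. The definition of $A_{LL}$ then yields the two competing bounds $\|a_{jk}^{(\nu)}(t,\cdot)-a_{jk}(t,\cdot)\|_{L^\infty} \lesssim A_{LL}\,\eps_\nu|\log\eps_\nu|$ and $\|\partial_t a_{jk}^{(\nu)}(t,\cdot)\|_{L^\infty} \lesssim A_{LL}|\log\eps_\nu|\sim A_{LL}\nu$; the first measures the error committed by freezing $a^{(\nu)}$ in the equation, the second is the loss that the weight must pay for. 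The Lipschitz-in-$x$ bound $A$ enters only through the spatial commutators treated next.

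The genuinely new point, and the step I expect to be the main obstacle, is the treatment of the principal operator once it is localized in frequency. Commuting $\Dn$ past $\partial_{x_j}(a_{jk}\partial_{x_k}\cdot)$ produces commutators that the $C^2$ argument of \cite{DSP2009} dispatched by two integrations by parts; with $a_{jk}$ only Lipschitz in $x$ this is no longer available. Instead I would expand, by Bony's paraproduct, $a_{jk}\partial_{x_k}u = T_{a_{jk}}\partial_{x_k}u + T_{\partial_{x_k}u}a_{jk} + R(a_{jk},\partial_{x_k}u)$. The paraproduct $T_{a_{jk}}$ produces, after localization, a symmetric elliptic operator with the same ellipticity constant $\kappa$, which is exactly the ``good'' operator driving the convexity; the two remaining terms, together with the commutator $[\Dn,T_{a_{jk}}]$, are perturbations that must be bounded by $A$ times quantities one order lower. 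Here Theorem~35 of Coifman and Meyer \cite{CM1978} is the decisive tool: it furnishes precisely the commutator estimate that keeps these remainders within the budget that ellipticity and the weight can afford. Matching the output of that theorem to the dyadic weights, and keeping track of the low-frequency block, is the delicate part.

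With these estimates I would establish the weighted energy estimate by applying the convexity computation not to $\|u(t)\|_\Lt^2$ but to $y(t) := \|u(t)\|_{H^{\omega(t)}}^2$, with a profile $\omega$ satisfying $\omega(0) = -s$, so that $y(0) = \|u(0,\cdot)\|_{H^{-s}}^2$. Writing \eqref{BWeq} as $\partial_t u = -A(t)u$, where $A(t) = \sum_{j,k}\partial_{x_j}(a_{jk}\partial_{x_k}\cdot)$ is symmetric and nonpositive, the Agmon--Nirenberg identity gives $\frac{d^2}{dt^2}\log y \geq G(t) - R(t)$, in which $G(t)$ is the nonnegative principal part (the Cauchy--Schwarz gap, reinforced by parabolicity) and $R(t)$ gathers the time-regularization loss $\langle \partial_t A^{(\nu)}\,u,u\rangle$, now finite and growing only like $A_{LL}\nu$ in the frequency, together with the paraproduct remainders of the previous step. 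Choosing the slope of $\omega$, and a free large parameter in the weight, so that $G$ dominates $R$ yields the approximate convexity $\frac{d^2}{dt^2}\log y \geq -C$ on $[0,\sigma]$; the admissibility of this slope over the interval, run at its maximal value $\sim\alpha_1$, is what forces a relation $\alpha_1\sigma \lesssim 1-s$, hence $\sigma = \min\{T,(1-s)/\alpha_1\}$. This is the weighted energy estimate underlying Theorem~\ref{thm:Stab}.

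Finally I would derive \eqref{est:Stab}. From $\frac{d^2}{dt^2}\log y \geq -C$ the function $t \mapsto \log y(t) + \frac{C}{2}t^2$ is convex, so interpolating between $t = 0$ and a point of the intermediate window $[\frac{5}{8}\sigma,\frac{7}{8}\sigma]$ gives a three-factor inequality $y(t) \leq C'\,y(0)^{1-\lambda(t)}\big(\max_{\tau\in[\frac58\sigma,\frac78\sigma]}y(\tau)\big)^{\lambda(t)}$ with $\lambda(t)\in(0,1)$ small for $t\in[0,\bar\sigma]$. Using $y(0) = \|u(0,\cdot)\|_{H^{-s}}^2$, controlling the window values of $y$ by the $\Lt$ norms there, and restoring the full norm $\|u(t,\cdot)\|_\Lt$ for $t\in[0,\bar\sigma]$ by combining Sobolev interpolation with a standard local parabolic energy estimate — which is where the factor $\tfrac1\sigma$ and the maximum over $[\frac58\sigma,\frac78\sigma]$ arise — one reaches an estimate of the type $\|u(t,\cdot)\|_\Lt \lesssim (1+\tfrac1\sigma\max_{[\frac58\sigma,\frac78\sigma]}\|u\|_\Lt)\,\|u(0,\cdot)\|_{H^{-s}}^{\lambda(t)}$. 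The exponent $\lambda(t)$ still carries the free parameter of the weight; optimizing over it, the step where the Log-Lipschitz modulus turns a clean power into a logarithmic one, converts $\|u(0,\cdot)\|_{H^{-s}}^{\lambda}$ into $\exp(-N|\log\|u(0,\cdot)\|_{H^{-s}}|^\delta)$ and fixes $\delta\in(0,1)$ together with the constants $\rho$, $M$, $N$.
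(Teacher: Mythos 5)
You have assembled the right toolbox (Littlewood--Paley blocks, time-mollification at a dyadic scale, Bony's paraproduct, the Coifman--Meyer commutator theorem, and a loss of derivatives whose slope forces $\sigma=\min\{T,(1-s)/\alpha_1\}$), but the central analytic step is asserted rather than supplied, and in the form you state it cannot be correct. You claim that, after absorbing the residual $R(t)$, one reaches $\frac{d^2}{dt^2}\log y(t)\geq -C$ with $C$ a constant. A bounded convexity defect yields, via the three-point argument you then invoke, a genuine H\"older-type bound $y(t)\leq C'\,y(0)^{1-\lambda(t)}Y^{\lambda(t)}$ with $\lambda(t)$ fixed by the geometry of the interval; since $\|u(0,\cdot)\|_{H^{-s}}\leq\|u(0,\cdot)\|_{\Lt}$, translating back to $L^2$ gives an estimate of the form \eqref{illustration3}, which the counterexample of \cite{DSP2009} excludes for coefficients that are only Log-Lipschitz in $t$ (even $x$-independent ones, hence Lipschitz in $x$). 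The logarithmic deterioration in \eqref{est:Stab} must therefore already live inside the differential inequality, and this is precisely the paper's key construction that your outline omits: the weight $\Phi_\lambda$ is built as a solution of the nonlinear ODE \eqref{eq:DefPhi}, whose right-hand side carries the modulus $\mu(s)=s(1+|\log s|)$, so that the convexity $\Phi''_\lambda$ is strong enough to absorb, at low frequencies $2^{2\nu}\lesssim\Phi'_\lambda$, the cross terms of size $\Phi'_\lambda\,\nu\,\|v_\nu\|^2_{\Lt}$ that the term $\alpha\,\nu\,2^{2\nu}\|v_\nu\|^2_{\Lt}$ coming from the loss of derivatives can only handle at high frequencies. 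The final form $\exp(-N|\log\|u(0,\cdot)\|_{H^{-s}}|^{\delta})$ then follows from the explicit asymptotics of $\psi_\lambda$ and $\Lambda_\lambda^{-1}$ (Lemma \ref{weight-properties}) after fixing the free parameter $\beta$ by $e^{-\beta\Phi_\lambda(\tau/\beta)}=\|u(0,\cdot)\|_{H^{-s}}^{-1}$; ``optimizing a free parameter'' inside a H\"older exponent has no analogue if the convexity defect really were a constant.

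Two further points would break the argument even at the level of the perturbative terms. First, the plain paraproduct is neither positive nor symmetric: $a(x)\geq\kappa>0$ does not imply $\real\la T_a u|u\ral\geq c\|u\|^2_{\Lt}$, which is why the paper works with the modified paraproduct $T^m_a$ of \eqref{PPDef} with $m$ large (Proposition \ref{prop:Pos}) and separately controls $(T^m_a-(T^m_a)^{\ast})\partial_{x_j}$ (Proposition \ref{Adjoint}); the ``symmetric elliptic operator with the same ellipticity constant $\kappa$'' you rely on does not exist without that modification. Second, the mollification scale $\eps_\nu\sim2^{-\nu}$ is too coarse for a second-order operator: the freezing error contributes $\eps_\nu|\log\eps_\nu|\,2^{2\nu}\sim\nu\,2^{\nu}$ per block, which after Cauchy--Schwarz against $\partial_t v_\nu$ produces $\nu^2 2^{2\nu}\|v_\nu\|^2_{\Lt}$ and overwhelms the available good term $\alpha\,\nu\,2^{2\nu}\|v_\nu\|^2_{\Lt}$. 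One needs $\eps_\nu=2^{-2\nu}$, as in the paper, so that both the freezing error and $|\partial_t a_{\eps_\nu}|\sim|\log\eps_\nu|$ cost only a factor comparable to $\nu$.
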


Iterating the local result of Theorem \ref{thm:Stab} a finite number of times, one obtains the following global continuous dependence result.

\begin{thm}{\textbf{Conditional stability (global).}} \label{thm:StabGlob} Let $s \in (0,1)$. Then, for $T' \in (0,T)$ and $D>0$ there exist positive constants$\rho'$, $\delta'$, $M'$ and $N'$, depending only on $A_{LL}$, $A$, $\kappa$, $n$, $s$ and $T'$ such that if $u \in \pazocal H$ is a solution of \eqref{BWeq} satisfying $\|u(0,\cdot)\|_{H^{-s}} \leq \rho$ and $\sup_{t\in 0,T}\|u(t,\cdot)\|_{L^2} \leq D$, the inequality \begin{eqnarray*}
\sup\limits_{t \in [0,T']} \|u(t,\cdot)\|_{\Lt} \leq M' \exp\big(-N'|\log(\|u(0,\cdot)\|_{H^{-s}})|^{\delta'}\big)
\end{eqnarray*} holds true. \end{thm}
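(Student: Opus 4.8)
The plan is to iterate the local estimate of Theorem \ref{thm:Stab}, exploiting the time-translation invariance of the coefficient class and using the a priori bound $\sup_{t\in[0,T]}\|u(t,\cdot)\|_{L^2}\le D$ to turn the inner maximum in \eqref{est:Stab} into a fixed multiplicative constant. The key structural observation is that if $u$ solves \eqref{BWeq} on $[0,T]$, then for any base point $t_0$ the shifted function $v(t,\cdot):=u(t+t_0,\cdot)$ solves a backward parabolic equation whose coefficients $a_{jk}(\cdot+t_0,\cdot)$ have exactly the same ellipticity constant $\kappa$ and the same quantities $A$ and $A_{LL}$; hence Theorem \ref{thm:Stab} applies to every such $v$ with one and the same set of constants $\alpha_1,\rho,\delta,M,N$.

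First I would fix the geometry of the iteration. I would shrink $\sigma$ if necessary so that $\sigma\le T-T'$; this guarantees that for every base point $t_i:=i\bar\sigma$ with $t_i\le T'$ the shifted solution is defined on a full window $[0,\sigma]$ inside $[0,T]$, so the look-ahead maximum in \eqref{est:Stab} is controlled by the a priori bound $D$. Setting $L:=\lceil T'/\bar\sigma\rceil$, a finite number depending only on $T'$, $T$, $s$ and $\alpha_1$, the intervals $[t_i,t_{i+1}]$, $0\le i\le L-1$, cover $[0,T']$.

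Next I would run the recursion. Writing $b_i:=\|u(t_i,\cdot)\|_{H^{-s}}$, $y_i:=|\log b_i|$ and $K:=1+D/\sigma$, the shifted local estimate on $[t_i,t_{i+1}]$ reads
\[
\sup_{t\in[t_i,t_{i+1}]}\|u(t,\cdot)\|_{L^2}\ \le\ MK\exp\!\big(-N\,y_i^{\delta}\big),
\]
and, since $s>0$, the elementary embedding $\|\cdot\|_{H^{-s}}\le\|\cdot\|_{L^2}$ yields $b_{i+1}\le MK\exp(-Ny_i^{\delta})$, i.e. $y_{i+1}\ge N y_i^{\delta}-\log(MK)$. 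Choosing the global threshold $\rho'$ small enough that $y_0\ge|\log\rho'|$ is large forces $Ny_i^{\delta}\ge 2\log(MK)$ to persist through the finitely many steps, so $y_{i+1}\ge\tfrac12 Ny_i^{\delta}$; an immediate induction then gives $y_{i}\ge\big(\tfrac{N}{2}\big)^{(1-\delta^{i})/(1-\delta)}y_0^{\delta^{i}}=:\phi_i$. This simultaneously keeps every $b_i$ below $\rho$ and below $1$, so each application of Theorem \ref{thm:Stab} is legitimate, and for $y_0$ large the sequence $\phi_i$ is decreasing in $i$.

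Finally I would assemble the pieces: bounding each local estimate by its worst value $\phi_{L-1}$,
\[
\sup_{t\in[0,T']}\|u(t,\cdot)\|_{L^2}\ \le\ \max_{0\le i\le L-1}MK\exp\!\big(-N\phi_i^{\delta}\big)\ \le\ MK\exp\!\big(-N\phi_{L-1}^{\delta}\big),
\]
and inserting $\phi_{L-1}$ produces the claimed form with $M'=MK$, $\delta'=\delta^{L}$ and $N'=N\big(\tfrac N2\big)^{\delta(1-\delta^{L-1})/(1-\delta)}$, all depending only on $A_{LL},A,\kappa,n,s,T',T$ and $D$. I expect the main obstacle to be the bookkeeping of this recursion: because $\delta<1$ each step genuinely degrades the smallness (the exponent contracts from $\delta^i$ to $\delta^{i+1}$), and one must verify that no $b_i$ is pushed above the threshold $\rho$ within the $L$ steps. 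This is precisely the point that dictates the quantitative choice of $\rho'$ and where the finiteness of $L$—hence the strict inequality $T'<T$—is essential.
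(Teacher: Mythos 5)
Your proposal is correct and follows exactly the route the paper indicates (the paper itself only remarks that Theorem \ref{thm:StabGlob} is obtained by ``iterating the local result of Theorem \ref{thm:Stab} a finite number of times'', without writing out the recursion). Your bookkeeping of the iterated exponents $\delta^{i}$, the use of the a priori bound $D$ to control the look-ahead maximum, and the choice of $\rho'$ to keep every $\|u(t_i,\cdot)\|_{H^{-s}}$ below the local threshold are precisely the details that make that one-line argument rigorous.
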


\begin{rem} Theorems \ref{thm:Stab} and \ref{thm:StabGlob} hold also if one considers equation \eqref{BWeq} with lower order terms. As already mentioned, one has to assume Lipschitz-regularity in $x$ and the additional dependence of the constants on the $L^\infty$-norm and the $\Lip$-norm of those coefficients. \end{rem}

\subsubsection{Weighted energy estimates} \label{sec:WeightedEE}

The proof of Theorem \ref{thm:Stab} relies on an appropriate weighted energy estimate. The choice of the weight function is connected with the modulus of continuity with respect to $t$ as in \cite{DSP2009}. A similar situation occurred in \cite{DSP2005}, \cite{DSP2012}, where backward-uniqueness for parabolic operators by means of suitable Carleman estimates was obtained. In both cases, the weight function was deduced as a solution of a second order non-linear ordinary differential equation.

Let us now introduce the weight function that we are going to use here. For $s>0$, let $\mu(s)=s(1+|\log(s)|)$. For $\tau \geq 1$, we define \begin{equation*}
\theta(\tau) := \int_{\frac{1}{\tau}}^1 \frac{1}{\mu(s)} ds = \log(1+|\log(\tau)|).
\end{equation*} The function $\theta : [1,+\infty) \rightarrow [0,+\infty)$ is bijective and strictly increasing. For $y \in (0,1]$ and $\lambda > 1$, we set $\psi_\lambda(y) = \theta^{-1}(-\lambda \log(y)) = \exp(y^{-\lambda}-1)$ and we define \begin{equation*}
\Phi_\lambda(y) := -\int_y^1 \psi_\lambda(z) dz.
\end{equation*} The function $\Phi_\lambda :(0,1] \rightarrow (-\infty,0]$ is bijective and strictly increasing; moreover, it satisfies \begin{equation} \label{eq:DefPhi}
y\Phi''_{\lambda}(y)= -\lambda(\Phi_\lambda'(y))^2 \mu\big( \frac{1}{\Phi_\lambda'(y)} \big) =
-\lambda\Phi_\lambda'(y)\big( 1 + |\log\big(\frac{1}{\Phi_\lambda'(y)}\big)| \big).
\end{equation} This is the second order differential equation we mentioned above. The reason for this choice is made clear in \cite[Sec. 2]{DSP2009}. The computations in \cite{DSP2005}, \cite{DSP2012} lead to a different differential equation and consequently to a different weight. 
In the next lemma, we collect some properties of the functions $\psi_\lambda$ and $\Phi_\lambda$. The proof is left to the reader.

\begin{lem}\label{weight-properties} Let $\zeta>1$. Then, for $y\leq 1/\zeta$, \begin{equation*}
\psi_\lambda(\zeta y) = \exp(\zeta^{-\lambda}-1)(\psi_\lambda(y))^{\zeta^{-\lambda}}.
\end{equation*} Define $\Lambda_\lambda(y) := y \Phi_\lambda(1/y)$. Then the function $\Lambda_\lambda : [1,+\infty) \rightarrow (-\infty,0]$ is bijective and \begin{equation*}
\lim_{z \rightarrow -\infty} -\frac{1}{z}\psi_\lambda\big( \frac{1}{\Lambda_\lambda^{-1}(z)} \big) = +\infty.
\end{equation*}
\end{lem}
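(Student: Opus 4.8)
The plan is to dispatch the three assertions separately, using throughout the closed form $\psi_\lambda(y)=\exp(y^{-\lambda}-1)$ and the identity $\Phi_\lambda'(y)=\psi_\lambda(y)$, which follows immediately from differentiating $\Phi_\lambda(y)=-\int_y^1\psi_\lambda(z)\,dz$. The functional equation is purely algebraic: the hypothesis $y\le 1/\zeta$ ensures $\zeta y\le 1$, so $\psi_\lambda(\zeta y)$ is defined, and then $\psi_\lambda(\zeta y)=\exp(\zeta^{-\lambda}y^{-\lambda}-1)$, whereas $\exp(\zeta^{-\lambda}-1)(\psi_\lambda(y))^{\zeta^{-\lambda}}=\exp\bigl(\zeta^{-\lambda}-1+\zeta^{-\lambda}(y^{-\lambda}-1)\bigr)$. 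Expanding the exponent in the second expression gives $\zeta^{-\lambda}y^{-\lambda}-1$, so the two sides coincide. No estimates are needed here.

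For the bijectivity of $\Lambda_\lambda(y)=y\,\Phi_\lambda(1/y)$ I would argue by continuity together with strict monotonicity. Differentiating gives $\Lambda_\lambda'(y)=\Phi_\lambda(1/y)-\tfrac1y\psi_\lambda(1/y)$, and since $\Phi_\lambda\le 0$ on $(0,1]$ while $\psi_\lambda>0$, this is strictly negative; hence $\Lambda_\lambda$ is strictly decreasing. The endpoint values are $\Lambda_\lambda(1)=\Phi_\lambda(1)=0$ and, using $y\ge 1$ together with $\Phi_\lambda(1/y)\to-\infty$ as $y\to+\infty$ (which holds because $\Phi_\lambda:(0,1]\to(-\infty,0]$ is onto), one gets $\Lambda_\lambda(y)\le\Phi_\lambda(1/y)\to-\infty$. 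A continuous, strictly decreasing function on $[1,+\infty)$ with these two limits is a bijection onto $(-\infty,0]$.

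The third assertion is the only substantive point and is where I expect the main work. Setting $y=\Lambda_\lambda^{-1}(z)$ and then $w=1/y$, the limit $z\to-\infty$ corresponds to $y\to+\infty$, hence to $w\to0^+$, and the quantity in question becomes $-\tfrac1z\psi_\lambda(1/y)=-\dfrac{w\,\psi_\lambda(w)}{\Phi_\lambda(w)}=\dfrac{w\,\psi_\lambda(w)}{|\Phi_\lambda(w)|}$. It therefore suffices to show that $|\Phi_\lambda(w)|/(w\,\psi_\lambda(w))\to0$ as $w\to0^+$. Both $|\Phi_\lambda(w)|=\int_w^1\psi_\lambda$ and $w\,\psi_\lambda(w)$ tend to $+\infty$ (the integral diverges since $\psi_\lambda$ is non-integrable at the origin, which is exactly why $\Phi_\lambda(0^+)=-\infty$), so this is an $\infty/\infty$ form. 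Applying L'H\^opital's rule, with $\frac{d}{dw}|\Phi_\lambda(w)|=-\psi_\lambda(w)$ and $\frac{d}{dw}\bigl(w\,\psi_\lambda(w)\bigr)=\psi_\lambda(w)\bigl(1-\lambda w^{-\lambda}\bigr)$, the ratio of derivatives collapses to $\dfrac{w^\lambda}{\lambda-w^\lambda}$, which tends to $0$. The driving mechanism is simply that the extremely rapid growth of $\psi_\lambda$ near $0$ renders the tail integral $\int_w^1\psi_\lambda$ negligible against $w\,\psi_\lambda(w)$; the only care required is to check the hypotheses of L'H\^opital (both functions diverging, the derivative ratio having a limit), after which the claimed limit $+\infty$ follows.
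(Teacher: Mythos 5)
Your proof is correct: the functional equation is the direct algebraic verification, the bijectivity follows from the sign of $\Lambda_\lambda'$ together with the endpoint values, and the L'H\^opital computation for the limit (with the derivative ratio collapsing to $w^\lambda/(\lambda-w^\lambda)\to 0$) checks out, including the verification that $\int_0^1\psi_\lambda$ diverges and that the denominator's derivative does not vanish near $0$. The paper explicitly leaves this proof to the reader, so there is no written argument to compare against, but yours is the natural one and is complete.
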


With these preparations, we are ready to state the energy estimate which will be needed to prove Theorem \ref{thm:Stab}.

\begin{prop}{\textbf{Weighted energy estimate.}} \label{thm:Energy} Let $s \in (0,1)$. Then, there exist positive constants $\bar{\lambda}>1$, $\bar{\gamma} $, $\alpha_1$ and $M>0$ such that, setting $\alpha := \max\{\alpha_1, T^{-1}\}$, $\sigma := \frac{1-s}{\alpha}$, $\tau  := \frac{\sigma}{4}$, letting $\beta \geq \sigma + \tau$ be a free parameter, then, for $u \in \pazocal H$ a solution of equation \eqref{BWeq}, one has \begin{eqnarray} \label{MainIn}
\nonumber && \int_0^p e^{2\gamma t}e^{-2\beta \Phi_\lambda\left( \frac{t+\tau}{\beta} \right)} \|u(t,\cdot)\|_{H^{1-s-\alpha t}}^2 dt \\
\nonumber && \qquad \qquad \leq M \Big( (p+\tau)e^{2\gamma p}e^{-2\beta \Phi_\lambda\left( \frac{p+\tau}{\beta} \right)} \|u(p,\cdot)\|_{H^{1-s-\alpha p}}^2 \\
\label{EnergyEst1} && \qquad \qquad \qquad \qquad + \tau \Phi'_\lambda\left( \frac{\tau}{\beta} \right) e^{-2\beta \Phi_\lambda\left( \frac{\tau}{\beta} \right)} \|u(0,\cdot)\|_{H^{-s}}^2 \Big)
\end{eqnarray} for all $p \in [0,\frac{7}{8}\sigma]$, $\lambda \geq \bar{\lambda}$ and $\gamma \geq \bar{\gamma}$.
The constant $\alpha_1$ depends only on $A_{LL}$, $A$, $\kappa$, $s$ and $n$, while the constants $\bar\lambda$, $\bar\gamma$  and $M$ 
depend on $A_{LL}$, $A$, $\kappa$, $s$, $n$ and $T$.

\end{prop}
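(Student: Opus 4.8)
The plan is to run a frequency-localized weighted energy argument, using Littlewood–Paley theory to compensate for the $\Log\Lip$-regularity in $t$ and Bony's paraproduct together with Coifman–Meyer to compensate for the mere $\Lip$-regularity in $x$. Decompose $u = \sum_\nu \Dn u$ and work with the localized energy
\begin{equation*}
E_\nu(t) := e^{2\gamma t}\, e^{-2\beta \Phi_\lambda\left(\frac{t+\tau}{\beta}\right)}\, 2^{2\nu(1-s-\alpha t)}\,\|\Dn u(t,\cdot)\|_{\Lt}^2 ,
\end{equation*}
so that summing over $\nu$ reconstructs (up to Littlewood–Paley equivalences) the weighted $H^{1-s-\alpha t}$-norm appearing in \eqref{MainIn}. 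Note that for $t\in[0,p]$ with $p\leq\frac78\sigma$ and $\beta\geq\sigma+\tau$ the argument $\frac{t+\tau}{\beta}$ stays in $(0,1)$, so $\Phi_\lambda$ and $\psi_\lambda=\Phi_\lambda'$ are well defined along the computation, and the time-dependent Sobolev index $1-s-\alpha t$ stays strictly positive.

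Next I would differentiate $E_\nu$. The logarithmic derivative of the weight produces the factor $2\gamma - 2\Phi_\lambda'\!\left(\frac{t+\tau}{\beta}\right) - 2\alpha\nu\log 2$, while the term $2\real\, e^{2\gamma t}e^{-2\beta\Phi_\lambda}2^{2\nu(1-s-\alpha t)}\langle \partial_t \Dn u,\Dn u\rangle$ is rewritten by means of the equation \eqref{BWeq}, giving $\partial_t \Dn u = -\sum_{j,k}\partial_{x_j}\Dn(a_{jk}\partial_{x_k}u)$, and then integrating by parts in $x$. The principal (diagonal) contribution $\real\langle a_{jk}\Dn\partial_{x_k}u,\partial_{x_j}\Dn u\rangle$ has, by ellipticity \eqref{ellipticity}, size $\gtrsim \kappa\, 2^{2\nu}\|\Dn u\|_{\Lt}^2$ with the \emph{unfavourable} backward-parabolic sign; this is the term the weight must dominate. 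The deviation $\Dn(a_{jk}\partial_{x_k}u) - a_{jk}\Dn\partial_{x_k}u$ is the commutator $[\Dn,a_{jk}]\partial_{x_k}u$, and here is where only $\Lip$-regularity in $x$ is used: writing $a_{jk}\partial_{x_k}u$ through Bony's decomposition into paraproducts plus remainder and invoking the Coifman–Meyer theorem (Th.\ 35 of \cite{CM1978}), one bounds this commutator by $C A\, c_\nu\, 2^{\nu(s+\alpha t-1)}\|u\|_{H^{1-s-\alpha t}}$ with $(c_\nu)\in\ell^2$, which is exactly the replacement for the integration by parts that the old proof could only perform under $C^2$-regularity.

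To handle the $\Log\Lip$-dependence in $t$, I would, in the diagonal term, substitute $a_{jk}(t,x)$ by a time-mollified coefficient $a_{jk}^{(\nu)}$ at scale $\eps_\nu\sim 2^{-\nu}$. The substitution error is controlled by $A_{LL}\,\mu(\eps_\nu)\,2^{2\nu}\sim A_{LL}\,2^{\nu}(1+\nu)$ and the time-derivative of $a_{jk}^{(\nu)}$ by $A_{LL}(1+|\log\eps_\nu|)\sim A_{LL}\,\nu$; these are precisely the quantities that the $\Phi_\lambda''$-term absorbs, since the defining ODE \eqref{eq:DefPhi} equates $\frac1\beta\Phi_\lambda''\!\left(\frac{t+\tau}{\beta}\right)$ with $-\lambda\Phi_\lambda'\,\mu\!\left(1/\Phi_\lambda'\right)$, i.e.\ with the $\Log\Lip$-modulus of the weight itself. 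Choosing $\lambda\geq\bar\lambda$ and $\gamma\geq\bar\gamma$ large enough (and $\alpha$ large enough to make the derivative-loss term $-2\alpha\nu\log 2$ help) renders the net coefficient of $\|\Dn u\|_{\Lt}^2$ in $E_\nu'(t)$ nonpositive, so that integrating $E_\nu'$ over $[0,p]$ and summing over $\nu$ leaves only the boundary contributions at $t=p$ and $t=0$; the parabolic gain of one derivative between consecutive energy levels, together with the factor $\Phi_\lambda'\!\left(\frac{\tau}{\beta}\right)=\psi_\lambda\!\left(\frac{\tau}{\beta}\right)$ carried by the $t=0$ endpoint, reorganizes the initial term into the stated $\|u(0,\cdot)\|_{H^{-s}}^2$.

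The main obstacle I anticipate is the simultaneous management of the two low-regularity mechanisms in Steps above: showing that the spatial commutator estimate (valid under $\Lip_x$ via Coifman–Meyer) and the time-regularization error (of $\Log\Lip_t$ type) are both quantitatively dominated by the single weight through \eqref{eq:DefPhi}, with all $\nu$-dependent losses remaining $\ell^2$-summable after multiplication by $2^{2\nu(1-s-\alpha t)}$. Verifying that the thresholds $\bar\lambda$, $\bar\gamma$, $\alpha_1$ can be fixed uniformly, depending only on $A_{LL}$, $A$, $\kappa$, $s$, $n$ (and $T$), is the delicate point; the auxiliary estimates of Sub-Sections \ref{sec:Aux1} and \ref{sec:Aux2} are presumably precisely the packaging of these commutator and regularization bounds needed to close the argument.
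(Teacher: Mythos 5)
Your toolbox is the right one (Littlewood--Paley localization, the modified paraproduct $T^m_a$ with Coifman--Meyer for the spatial commutator, time-mollification of $a$ at a frequency-dependent scale, the ODE \eqref{eq:DefPhi}), but the core of the energy argument has a genuine gap. You differentiate $E_\nu(t)=e^{2\gamma t}e^{-2\beta\Phi_\lambda((t+\tau)/\beta)}2^{2\nu(1-s-\alpha t)}\|\Delta_\nu u\|^2_{L^2}$, i.e.\ you pair the equation with $\Delta_\nu u$ itself; differentiating the weight then produces the term $-2\Phi_\lambda'\big(\frac{t+\tau}{\beta}\big)E_\nu(t)$, and this term cannot be neutralized by making "the net coefficient of $\|\Delta_\nu u\|^2$ in $E_\nu'$ nonpositive": $\Phi_\lambda'(\tau/\beta)=\psi_\lambda(\tau/\beta)=\exp((\tau/\beta)^{-\lambda}-1)$ is unbounded in the free parameter $\beta$ (and grows with $\lambda$), while the competing contributions $2\gamma$ and $\kappa 2^{2\nu}$ are fixed and of size $O(1)$ for small $\nu$. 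Moreover $\Phi_\lambda''$ --- and hence the ODE \eqref{eq:DefPhi} you appeal to --- never actually enters $E_\nu'$ as you have set things up, since $E_\nu$ contains $\Phi_\lambda$ only through $e^{-2\beta\Phi_\lambda}$, whose $t$-derivative involves only $\Phi_\lambda'$. The paper resolves precisely this point by pairing the equation for $v_\nu=2^{-(s+\alpha t)\nu}\Delta_\nu w$ with the multiplier $(t+\tau)\partial_t v_\nu$ rather than with $v_\nu$: the dangerous term becomes $-(t+\tau)\Phi_\lambda'\langle v_\nu,\partial_tv_\nu\rangle$, which integrates by parts in $t$ into a total derivative plus $\frac12\Phi_\lambda'\|v_\nu\|^2+\frac12\frac{t+\tau}{\beta}\Phi_\lambda''\|v_\nu\|^2$, and the ODE \eqref{eq:DefPhi} with $\lambda>1$ makes this remainder negative.

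This is not cosmetic, because the same integration by parts is the only mechanism producing the right-hand side of \eqref{MainIn}. In your scheme the $t=0$ boundary contribution is $\sum_\nu E_\nu(0)\approx e^{-2\beta\Phi_\lambda(\tau/\beta)}\|u(0,\cdot)\|^2_{H^{1-s}}$, a full derivative stronger than the stated $\|u(0,\cdot)\|^2_{H^{-s}}$, and nothing in your sketch supplies the factor $\tau\Phi_\lambda'(\tau/\beta)$; "the parabolic gain of one derivative between consecutive energy levels" is not a mechanism. In the paper both the factor and the $H^{-s}$ index come from evaluating $-\frac12\frac{d}{dt}\big((t+\tau)\Phi_\lambda'\big(\frac{t+\tau}{\beta}\big)\|v_\nu\|^2\big)$ at $t=0$, where $v_\nu(0)=2^{-s\nu}w_\nu(0)$, while the elliptic boundary term at $t=0$ (which would give $H^{1-s}$) is simply discarded because Proposition \ref{prop:Pos} gives it a favourable sign. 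Two further corrections: the coercive term $\kappa2^{2\nu}\|v_\nu\|^2$ is not "the term the weight must dominate" --- it is what generates the left-hand side of \eqref{MainIn} after integration in $t$, and the weight's job is instead to dominate the commutator and time-regularization errors; and the multiplier $(t+\tau)\partial_tv_\nu$ also supplies the quantity $(t+\tau)\|\partial_tv_\nu\|^2$ into which several error terms (the mollification error $T^m_a-T^m_{a_\nu}$ hit by two spatial derivatives, the non-self-adjointness of $T^m_{a_\nu}$, the $(a-T^m_a)$ remainder) are absorbed by Cauchy--Schwarz. Without that multiplier, all of these absorptions, and the appearance of the factors $(p+\tau)$ and $\tau$ in the boundary terms, have to be rebuilt from scratch.
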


Notice that this energy inequality undergoes a loss of derivatives. This phenomenon also occurred in \cite{CL1995}, \cite{CM2008} in the context of hyperbolic equations with Log-Lipschitz coefficients.

\section{Littlewood-Paley theory and Bony's paraproduct} 

In this section, we review some elements of the Littlewood-Paley decomposition which we shall use throughout this paper to define Bony's paraproduct. The proofs which are not contained in this section can be found in \cite{DSP2009}, \cite{DSP2012} and also \cite{Metivier}.

\subsection{Littlewood-Paley decomposition} \label{sec:LPBPP}

Let $\chi \in C_0^\infty(\R)$ with $0 \leq \chi(s) \leq 1$ be an even function and such that $\chi(s)=1$ for $|s| \leq 11/10$ and $\chi(s)=0$ for $|s| \geq 19/10$. We now define $\chi_k(\xi) = \chi(2^{-k}|\xi|)$ for $k \in \Z$ and $\xi \in \R^n_\xi$. Denoting by $\pazocal F$ the Fourier-transform $x \rightarrow \xi$ and by $\pazocal F^{-1}$ its inverse, we define the operators \begin{eqnarray*}
S_{-1}u = 0 &\text{and}&  S_ku = \chi_k(D_x)u = \pazocal F^{-1}(\chi_k(\cdot)\pazocal F(u)(\cdot)), \, k \geq 0, \\
\Delta_0 u = S_0 u &\text{and}& \Delta_k u = S_k u-S_{k-1}u, \, k \geq 1.
\end{eqnarray*} 
We define 
\begin{equation*}
\spec(u) := \supp(\pazocal F(u))
\end{equation*} and we will use the abbreviation $\Delta_k u = u_k$. For $u \in \pazocal S'(\Rn)$, we have \begin{eqnarray*}
u = \lim\limits_{k \rightarrow +\infty} S_k u = \sum\limits_{k \geq 0} \Delta_k u
\end{eqnarray*} in the sense of $\pazocal S'(\R^n_x)$.

We shall make use of the classical 
\begin{prop}[Bernstein's inequalities] \label{PropBernstein} 
Let $u \in \pazocal S'(\R^n_x)$. Then, for $\nu\geq 1$, 
\begin{equation} \label{bern1}
2^{\nu-1}\|u_\nu\|_{\Lt} \leq \|\nabla_x u_\nu\|_{L^2} \leq 2^{\nu+1} \|u_\nu\|_{\Lt}.
\end{equation} The right inequality of \eqref{bern1} holds also for $\nu = 0$.
\end{prop}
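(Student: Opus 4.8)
The plan is to reduce everything to Plancherel's theorem together with the fact that, for $\nu \geq 1$, the spectrum of $u_\nu = \Dn u$ is confined to a dyadic annulus on which $|\xi|$ is comparable to $2^\nu$. Since $\widehat{\nabla_x u_\nu}(\xi) = i\xi\,\widehat{u_\nu}(\xi)$, Plancherel gives
\begin{equation*}
\|\nabla_x u_\nu\|_{\Lt}^2 = \int_{\Rn} |\xi|^2\, |\widehat{u_\nu}(\xi)|^2 \, d\xi,
\end{equation*}
so both inequalities will follow at once from pointwise lower and upper bounds on $|\xi|$ over $\spec(u_\nu)$, applied under the integral sign.

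The first step is therefore to locate $\spec(u_\nu)$. For $\nu \geq 1$ we have $\widehat{u_\nu} = (\chi_\nu - \chi_{\nu-1})\,\widehat{u}$, so it suffices to determine where $\chi_\nu - \chi_{\nu-1}$ is nonzero. From the definition of $\chi$, the cutoff $\chi_{\nu-1}(\xi) = \chi(2^{-(\nu-1)}|\xi|)$ equals $1$ on $\{|\xi| \leq 2^{\nu-1}\cdot\tfrac{11}{10}\}$, while $\chi_\nu(\xi) = \chi(2^{-\nu}|\xi|)$ vanishes on $\{|\xi| \geq 2^\nu\cdot\tfrac{19}{10}\}$. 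On the smaller ball both cutoffs equal $1$, and on the complement of the larger ball both vanish, so their difference is supported in the annulus $\{2^\nu\cdot\tfrac{11}{20} \leq |\xi| \leq 2^\nu\cdot\tfrac{19}{10}\}$. The point of the specific numerical values built into $\chi$ is precisely that this annulus sits inside $\{2^{\nu-1} \leq |\xi| \leq 2^{\nu+1}\}$, since $\tfrac{11}{20} > \tfrac12$ and $\tfrac{19}{10} < 2$.

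With this localization the estimate is immediate: on $\spec(u_\nu)$ one has $2^{\nu-1} \leq |\xi| \leq 2^{\nu+1}$, hence $2^{2(\nu-1)}|\widehat{u_\nu}|^2 \leq |\xi|^2|\widehat{u_\nu}|^2 \leq 2^{2(\nu+1)}|\widehat{u_\nu}|^2$ pointwise; integrating and using Plancherel once more to rewrite $\int |\widehat{u_\nu}|^2 \, d\xi = \|u_\nu\|_{\Lt}^2$ yields \eqref{bern1}. For $\nu = 0$ one has $\widehat{u_0} = \chi_0\,\widehat{u}$ with $\chi_0$ supported in $\{|\xi| \leq \tfrac{19}{10}\} \subseteq \{|\xi| \leq 2\}$, which gives only the upper bound $\|\nabla_x u_0\|_{\Lt} \leq 2\|u_0\|_{\Lt}$; the lower bound genuinely fails here because $\spec(u_0)$ contains a neighbourhood of the origin, where $|\xi|$ is not bounded below. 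I do not expect any real obstacle: the only point requiring care is the bookkeeping of the constants $\tfrac{11}{10}$ and $\tfrac{19}{10}$, which must be checked to confirm that the support really lies inside the clean dyadic annulus $\{2^{\nu-1} \leq |\xi| \leq 2^{\nu+1}\}$ rather than a slightly larger one.
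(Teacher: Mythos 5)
Your argument is correct and is exactly the classical Plancherel-plus-spectral-localization proof that the paper itself omits (it defers to the standard references): the bookkeeping with $\tfrac{11}{20}$ and $\tfrac{19}{10}$ is right, the resulting annulus does sit inside $\{2^{\nu-1}\leq|\xi|\leq 2^{\nu+1}\}$, and your remark on why the lower bound fails for $\nu=0$ is accurate. Nothing to add.
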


In the following two propositions, we recall the characterization of the classical Sobolev spaces and Lipschitz-continuous functions via Littlewood-Paley decomposition.

\begin{prop} \label{SobolevLP} Let $s \in \R$. Then, a tempered distribution $u \in \pazocal S'(\Rn)$ belongs to $H^s(\Rn)$ iff the following two conditions hold: \begin{enumerate}[(i)]
\item for all $k \geq 0$, $\Delta_k u \in L^2(\Rn)$
\item the sequence $\{\delta_k\}_{k \in \N}$, where $\delta_k := 2^{ks}\|\Delta_k u\|_{L^2}$, belongs to $l^2(\N)$.
\end{enumerate} Moreover, there exists $C_s \geq 1$ such that for all $u \in H^s(\R^n_x)$, we have \begin{equation*}
\frac{1}{C_s}\|u\|_{H^s} \leq \| \{ \delta_k \}_k \|_{l^2} \leq C_s \|u\|_{H^s}.
\end{equation*}
\end{prop}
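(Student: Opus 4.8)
The plan is to reduce everything to Plancherel's theorem together with the support geometry of the Littlewood--Paley multipliers, tracking the comparison constants carefully so as to obtain the two-sided bound. Writing $\varphi_0 = \chi_0$ and $\varphi_k = \chi_k - \chi_{k-1}$ for $k \geq 1$, one has $\pazocal F(\Delta_k u) = \varphi_k \pazocal F(u)$, so that by Plancherel's theorem
\[
\|\Delta_k u\|_{L^2}^2 = \int_{\R^n_\xi} |\varphi_k(\xi)|^2 \,|\pazocal F(u)(\xi)|^2 \, d\xi .
\]

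First I would record two elementary geometric facts about the $\varphi_k$. From the definition of $\chi$, the support of $\varphi_0$ is contained in $\{|\xi| \leq 19/10\}$, while for $k \geq 1$ the support of $\varphi_k$ lies in the dyadic annulus $\{(11/10)2^{k-1} \leq |\xi| \leq (19/10)2^k\}$. Consequently $\varphi_k \varphi_j \equiv 0$ whenever $|k-j| \geq 2$, so at most two of the functions $\varphi_k$ are nonzero at any given point; since moreover $|\varphi_k| \leq 1$, this yields $\sum_{k \geq 0} |\varphi_k(\xi)|^2 \leq 2$ uniformly in $\xi$. On $\supp \varphi_k$ one also has $\langle \xi \rangle^2 = 1 + |\xi|^2 \sim 2^{2k}$, hence there exist constants $0 < c_s \leq C_s$ depending only on $s$ and $n$ with
\[
c_s\, 2^{2ks} \leq \langle \xi \rangle^{2s} \leq C_s\, 2^{2ks} \quad \text{on } \supp \varphi_k ,
\]
the case $k=0$ being handled separately through the bound $1 \leq \langle \xi \rangle^2 \leq C$ there.

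For the right-hand inequality I would assume $u \in H^s$. Using $2^{2ks} \leq c_s^{-1}\langle \xi \rangle^{2s}$ on $\supp \varphi_k$ and summing the Plancherel identities,
\[
\sum_{k \geq 0} \delta_k^2 = \sum_{k \geq 0} 2^{2ks}\|\Delta_k u\|_{L^2}^2 \leq c_s^{-1}\int_{\R^n_\xi} \Big(\sum_{k \geq 0}|\varphi_k(\xi)|^2\Big)\langle \xi\rangle^{2s}|\pazocal F(u)(\xi)|^2\, d\xi \leq C\,c_s^{-1}\|u\|_{H^s}^2 ,
\]
the last step using the uniform overlap bound. This simultaneously establishes (i), (ii) and the estimate $\|\{\delta_k\}\|_{l^2} \leq C_s\|u\|_{H^s}$. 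For the converse I would start from the telescoping identity $\sum_{k \geq 0}\varphi_k \equiv 1$ (consistent with $u = \sum_k \Delta_k u$ in $\pazocal S'$), giving $\pazocal F(u) = \sum_k \varphi_k\, \pazocal F(u)$ pointwise. Because at most two terms are nonzero at each $\xi$, Cauchy--Schwarz yields $\big|\sum_k \varphi_k \pazocal F(u)\big|^2 \leq C \sum_k |\varphi_k|^2|\pazocal F(u)|^2$; combining this with $\langle \xi\rangle^{2s} \leq C_s 2^{2ks}$ on $\supp \varphi_k$ gives
\[
\|u\|_{H^s}^2 = \int_{\R^n_\xi} \langle \xi \rangle^{2s}|\pazocal F(u)|^2\, d\xi \leq C \sum_{k \geq 0} 2^{2ks}\int_{\R^n_\xi} |\varphi_k|^2|\pazocal F(u)|^2\, d\xi = C \sum_{k \geq 0} \delta_k^2 ,
\]
so that $\|u\|_{H^s} \leq C_s \|\{\delta_k\}\|_{l^2}$ and, in particular, $u \in H^s$ whenever (i) and (ii) hold.

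I expect the only genuine care to lie in the bookkeeping: verifying the finite-overlap constant and the weight comparison uniformly in $k$ with constants depending only on $s$ and $n$, treating the $k=0$ term and the sign of $s$ correctly when raising $\langle \xi \rangle^2 \sim 2^{2k}$ to the power $s$. There is no serious obstacle, since this is the standard Plancherel-plus-dyadic-decomposition argument, but these constants are exactly what must be controlled to produce the stated two-sided bound.
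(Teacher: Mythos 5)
Your proof is correct and follows the standard Plancherel-plus-dyadic-support argument; the paper itself offers no proof of this proposition, deferring to \cite{DSP2009}, \cite{DSP2012} and \cite{Metivier}, where essentially this same argument appears, so your route coincides with the intended one. The only point you pass over quickly is that in the converse direction one should first note that $\pazocal F(u)$ is locally square-integrable (any compact set meets only finitely many of the sets $\supp \varphi_k$, on each of which $\varphi_k \pazocal F(u) \in L^2$ by (i)), so that the integral $\int \langle \xi \rangle^{2s} |\pazocal F(u)(\xi)|^2 \, d\xi$ is meaningful and its finiteness, obtained from (ii) via your finite-overlap bound, indeed yields $u \in H^s$.
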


\begin{prop} \label{RevSobolevLP}  Let $s \in \R$ and $R > 2$. If a sequence $\{u_k\}_{k \in \N} \subset L^2(\Rn)$ satisfies \begin{enumerate}[(i)]
\item $\spec(u_{0}) \subseteq \{|\xi| \leq R \}$ and $\spec(u_k) \subseteq \{R^{-1} 2^{k} \leq |\xi| \leq 2R 2^{k}\}$ for all $k \geq 1$ and 
\item the sequence $\{\delta_k\}_{k \in \N}$, where $\delta_k := 2^{ks}\|u_k\|_{L^2}$, belongs to $l^2(\N)$
\end{enumerate} then $u = \sum\limits_{k \geq 0} u_k \in H^s(\Rn)$ and there exists $C_s \geq 1$ such that we have \begin{equation*}
\frac{1}{C_s}\|u\|_{H^s} \leq \| \{ \delta_k\}_k \|_{l^2} \leq C_s \|u\|_{H^s}.
\end{equation*} When $s > 0$, it is enough to assume that for all $k \geq 0$, \begin{equation*}
\spec(u_k) \subseteq \{|\xi| \leq R 2^k \}
\end{equation*} holds true. \end{prop}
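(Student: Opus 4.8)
\emph{The plan} is to extract everything from the \textbf{bounded overlap} of the frequency shells in hypothesis~(i) together with the elementary comparison $(1+|\xi|^2)^s \asymp 2^{2ks}$ on $\spec(u_k)$. I would first record the geometric fact underlying~(i): writing $A_0:=\{|\xi|\le R\}$ and $A_k:=\{R^{-1}2^k\le|\xi|\le 2R\,2^k\}$ for $k\ge 1$, the ratio of outer to inner radius of each shell is the fixed number $2R^2$, so there is an integer $N=N(R)$ with $A_k\cap A_{k'}=\emptyset$ whenever $|k-k'|>N$; equivalently every $\xi$ belongs to at most $2N+1$ of the shells. Granting this, for the annulus hypothesis I would argue directly on the Fourier side: since $\pazocal F(u)=\sum_k\pazocal F(u_k)$ has at most $2N+1$ nonzero summands at each $\xi$, Cauchy--Schwarz gives $|\pazocal F(u)(\xi)|^2\le(2N+1)\sum_k|\pazocal F(u_k)(\xi)|^2$, and integrating against $(1+|\xi|^2)^s$, using Plancherel and $(1+|\xi|^2)^s\le C_s 2^{2ks}$ on $\spec(u_k)$, yields $\|u\|_{H^s}^2\le C_s\sum_k 2^{2ks}\|u_k\|_{\Lt}^2=C_s\|\{\delta_k\}\|_{l^2}^2$. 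The same estimate applied to the tails $\sum_{k>m}u_k$ shows the partial sums form a Cauchy sequence in $H^s$, so the series converges \emph{in $H^s$} (not merely in $\pazocal S'(\Rn)$) and the bound survives the limit.

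The genuinely different case is the relaxed hypothesis $\spec(u_k)\subseteq\{|\xi|\le R\,2^k\}$ allowed when $s>0$: now each support contains a fixed ball, the overlap is infinite, and the almost-orthogonality of the first paragraph fails. Here I would route through Proposition~\ref{SobolevLP}. Applying the honest block $\Delta_j$ to $u=\sum_k u_k$ and using the frequency support of the multiplier defining $\Delta_j$, the piece $\Delta_j u_k$ can only be nonzero when the shell of $\Delta_j$ meets $\{|\xi|\le R2^k\}$, i.e. when $k\ge j-c$ for some $c=c(R)$; this gives the one-sided band relation $\Delta_j u=\sum_{k\ge j-c}\Delta_j u_k$ and hence $2^{js}\|\Delta_j u\|_{\Lt}\le\sum_{k\ge j-c}2^{(j-k)s}\delta_k$. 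The kernel $m\mapsto 2^{ms}\mathbbm{1}_{\{m\le c\}}$ (with $m=j-k$) lies in $l^1$ \emph{precisely because $s>0$}, so Young's inequality bounds $\|\{2^{js}\|\Delta_j u\|_{\Lt}\}\|_{l^2}$ by $C_s\|\{\delta_k\}\|_{l^2}$, and Proposition~\ref{SobolevLP} turns this into $\|u\|_{H^s}\le C_s\|\{\delta_k\}\|_{l^2}$. For the annulus hypothesis the identical computation produces a \emph{finitely} supported kernel $2^{ms}\mathbbm{1}_{\{|m|\le N\}}$, reproving the synthesis bound for every $s\in\R$ and giving a second, self-contained route.

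Together these give the first inequality $\tfrac{1}{C_s}\|u\|_{H^s}\le\|\{\delta_k\}\|_{l^2}$ of the displayed equivalence, together with the convergence claim. The reverse inequality $\|\{\delta_k\}\|_{l^2}\le C_s\|u\|_{H^s}$ I expect to be the \textbf{main obstacle}: since the shells in~(i) overlap, the individual pieces $u_k$ are \emph{not} recovered from $u$ by a single frequency-localized projector, so controlling $\{\delta_k\}$ by $\|u\|_{H^s}$ cannot be read off from the band relation and is the delicate point. I would approach it by applying Proposition~\ref{SobolevLP} to $u$ and estimating each $\|u_k\|_{\Lt}$ against the honest blocks $\|\Delta_j u\|_{\Lt}$ with $|j-k|\le N$, bounding the overlap contributions by means of the precise shell width $R$; in the relaxed regime the sign of $s$ is again essential to sum the half-infinite dyadic tail, which is exactly why the last clause is restricted to $s>0$. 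Throughout, the overlap count $N$ and the kernel sum $\sum_{m\le c}2^{ms}$ depend only on $R$ and $s$, which is what produces the uniform constant $C_s$.
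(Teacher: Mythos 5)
Your treatment of the substantive half of the statement --- convergence of the series and the synthesis bound $\tfrac{1}{C_s}\|u\|_{H^s}\le\|\{\delta_k\}_k\|_{l^2}$ --- is correct in both regimes. The bounded-overlap/Plancherel argument for the annulus hypothesis is sound for every $s\in\R$ (the point being the \emph{two-sided} comparison $c_R\,2^{2k}\le 1+|\xi|^2\le C_R\,2^{2k}$ on the $k$-th shell, which is sign-of-$s$ independent), the Cauchy-sequence argument upgrades the convergence from $\pazocal S'$ to $H^s$, and in the ball case the band relation $\Delta_j u=\sum_{k\ge j-c}\Delta_j u_k$ plus Young's inequality with the half-infinite kernel $\sum_{m\le c}2^{ms}$ (finite precisely when $s>0$) is exactly right. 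Note the paper itself offers no proof of this proposition, deferring to \cite{DSP2009}, \cite{DSP2012}, \cite{Metivier}; the argument given there is your second route, through Proposition \ref{SobolevLP} and Young's inequality, so your Plancherel route is a clean, equally valid alternative. One presentational caveat: run the estimates on finite partial sums first and let them define $u$ in the limit, since applying $\Delta_j$ termwise to $\sum_k u_k$ presupposes the convergence you are in the course of proving.

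The genuine gap is the right-hand inequality $\|\{\delta_k\}_k\|_{l^2}\le C_s\|u\|_{H^s}$, which you defer as the ``main obstacle'' with a sketched strategy. That strategy cannot be completed, because the inequality is false as stated. Since $R>2$, consecutive shells overlap in a set with nonempty interior: for instance $A_1\cap A_2\supseteq\{4/R\le|\xi|\le 4R\}$. Take $v\ne 0$ with $\spec(v)$ contained in that set, and put $u_1=v$, $u_2=-v$, $u_k=0$ for all other $k$. Hypotheses (i) and (ii) hold, yet $u=\sum_k u_k=0$ while $\|\{\delta_k\}_k\|_{l^2}=(2^{2s}+2^{4s})^{1/2}\|v\|_{\Lt}>0$, so no constant $C_s$ can exist. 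The structural reason is the one you half-identify yourself: the map $\{u_k\}_k\mapsto u$ is not injective, so no norm of $u$ can dominate any norm of the sequence; in particular your plan of bounding $\|u_k\|_{\Lt}$ by the blocks $\|\Delta_j u\|_{\Lt}$ with $|j-k|\le N$ founders exactly on cancellation, since $\Delta_j u$ only sees $\sum_{k'}\Delta_j u_{k'}$. The two-sided equivalence is correct only for the canonical choice $u_k=\Delta_k u$, which is Proposition \ref{SobolevLP}; in Proposition \ref{RevSobolevLP} the display should be read (and in the cited sources is stated) as the one-sided bound $\|u\|_{H^s}\le C_s\|\{\delta_k\}_k\|_{l^2}$, which is also the only part ever used in the paper. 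In short: you proved exactly the part of the claim that is true and needed, and the part you could not prove is not provable.
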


\begin{prop} \label{CharLip} A function $a \in L^\infty(\R^n_x)$ belongs to $\Lip(\R^n_x)$ iff \begin{equation*}
\sup\limits_{k \in \N_0} \|\nabla_x (S_k a)\|_{L^\infty} < +\infty.
\end{equation*} Moreover, there exists a positive constant $C$ such that if $a \in \Lip(\R^n_x)$, then \begin{equation*}
\|\Delta_k a\|_{L^\infty} \leq C 2^{-k} \|a\|_{\Lip}, \quad \text{and} \quad \|\nabla_x (S_k a)\|_{L^\infty} \leq C \|a\|_{\Lip}. \end{equation*} \end{prop}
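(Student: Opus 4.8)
The plan is to reduce everything to the convolution representation of the Littlewood--Paley blocks and to exploit the cancellation built into their Fourier symbols. Write $\varphi := \pazocal F^{-1}(\chi(|\cdot|))$, an even Schwartz function with $\int_{\Rn}\varphi\,dx = \chi(0) = 1$; the scaling rule for the Fourier transform then gives $S_k a = \varphi_k * a$ with $\varphi_k(x) = 2^{kn}\varphi(2^k x)$. For $k \geq 1$ one has $\chi_k - \chi_{k-1} = \eta(2^{-k}\cdot)$ with $\eta(\xi) := \chi(|\xi|) - \chi(2|\xi|)$; since $\chi \equiv 1$ near $0$, the function $\eta$ vanishes on a neighbourhood of the origin, so $\tilde\psi := \pazocal F^{-1}\eta$ is Schwartz with $\int_{\Rn}\tilde\psi\,dx = 0$, and $\Delta_k a = \tilde\psi_k * a$ with $\tilde\psi_k(x) = 2^{kn}\tilde\psi(2^k x)$.

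First I would prove the two quantitative estimates assuming $a \in \Lip(\Rn)$. Since $\int_{\Rn}\nabla_x\varphi_k\,dz = 0$ (the integral of a derivative of a Schwartz function), the value $a(x)$ can be subtracted:
\[
\nabla_x(S_k a)(x) = \int_{\Rn}\nabla_x\varphi_k(x-y)\,(a(y)-a(x))\,dy,
\]
whence $|\nabla_x(S_k a)(x)| \leq \|a\|_{\Lip}\int_{\Rn}|z|\,|\nabla_x\varphi_k(z)|\,dz$. A change of variables shows that this last integral equals $\int_{\Rn}|w|\,|\nabla\varphi(w)|\,dw =: C_\varphi$, which is finite and independent of $k$, giving $\|\nabla_x(S_k a)\|_{L^\infty} \leq C_\varphi\|a\|_{\Lip}$. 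The identical cancellation argument applied to $\tilde\psi_k$ yields, for $k \geq 1$, $\|\Delta_k a\|_{L^\infty} \leq \|a\|_{\Lip}\int|z|\,|\tilde\psi_k(z)|\,dz = 2^{-k}\|a\|_{\Lip}\int|w|\,|\tilde\psi(w)|\,dw$, which is the claimed $C2^{-k}\|a\|_{\Lip}$ bound; the case $k=0$ is immediate from $\|\Delta_0 a\|_{L^\infty} = \|\varphi_0 * a\|_{L^\infty} \leq \|\varphi\|_{L^1}\|a\|_{L^\infty}$. In particular both displayed inequalities and the ``only if'' direction follow at once.

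For the converse, suppose $M := \sup_{k\geq 0}\|\nabla_x(S_k a)\|_{L^\infty} < \infty$ with $a \in L^\infty(\Rn)$. Each $S_k a$ is smooth, so the fundamental theorem of calculus along segments gives $|S_k a(x) - S_k a(x')| \leq M|x-x'|$ uniformly in $k$. On the other hand $\{\varphi_k\}$ is an approximate identity, since $\int\varphi = 1$ and $\varphi$ admits the integrable radially decreasing majorant $C(1+|\cdot|)^{-n-1}$; standard approximate-identity theory thus gives $S_k a = \varphi_k * a \to a$ at every Lebesgue point of $a$, hence almost everywhere. Letting $k \to \infty$ in the uniform Lipschitz bound between two Lebesgue points yields $|a(x)-a(x')| \leq M|x-x'|$ on a set of full measure, so the continuous representative of $a$ is Lipschitz with seminorm $\leq M$ and $a \in \Lip(\Rn)$.

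I expect the only delicate point to be this final limiting step: one must ensure that the uniform gradient bounds on the mollifications $S_k a$ genuinely transfer to $a$ itself, which is precisely where the a.e. convergence $S_k a \to a$ is used (alternatively, uniform boundedness of $\{S_k a\}$ in $L^\infty$ together with the equicontinuity coming from the uniform Lipschitz bound would give the same conclusion via Arzel\`a--Ascoli and a diagonal argument). The forward estimates are by contrast routine once the vanishing mean of $\nabla_x\varphi_k$ and of $\tilde\psi_k$ is exploited, the scale invariance of the weighted $L^1$-norms of the kernels being the mechanism that produces the uniform bound in the first estimate and the gain of $2^{-k}$ in the second.
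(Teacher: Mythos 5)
Your proof is correct, and since the paper itself does not prove Proposition \ref{CharLip} (it defers the proof to \cite{DSP2009}, \cite{DSP2012} and \cite{Metivier}), the relevant comparison is with the standard argument in those references, which is exactly what you give: the convolution representation $S_k a=\varphi_k\ast a$, $\Delta_k a=\tilde\psi_k\ast a$, the vanishing moments $\int\nabla\varphi_k=0$ and $\int\tilde\psi_k=0$ combined with the scale-invariance of the weighted kernel norms to get the two quantitative bounds, and approximate-identity convergence (or Arzel\`a--Ascoli) to transfer the uniform Lipschitz bounds on $S_k a$ back to $a$ for the converse. No gaps.
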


\subsection{Bony's (modified) paraproduct} \label{LPBPP}

Let $a \in L^\infty(\R^n_x)$. Then, Bony's paraproduct of $a$ and $u \in H^s(\R^n_x)$ is defined as \begin{equation*}
T_a u = \sum\limits_{k \geq 3} S_{k-3}a \Delta_k u.
\end{equation*} For the proof of our continuous dependence result it is essential that $T_a$ is a positive operator. Unfortunately, this is not implied by $a(x) \geq \kappa > 0$. Therefore, we have to modify the paraproduct a little bit. We introduce the operator \begin{equation} \label{PPDef}
T_a^m u = S_{m-1}aS_{m+2}u + \sum\limits_{k \geq m+3} S_{k-3}a\Delta_k u,
\end{equation} where $m \in \N_0$; note $T^0_a = T_a$. As shall be shown in one of the subsequent propositions, the operator $T_a^m$ is a positive operator for positive $a$ provided that $m$ is sufficiently large. The  proofs of the subsequent propositions can be found in \cite{DSP2012}. We give proofs only if there are important points to them which are not contained in \cite{DSP2012}.

\begin{prop} \label{MappingProp} Let $m \in \N$, $s \in \R$ and $a \in L^\infty(\R^n_x)$. Then, $T_a^m$ maps $H^{s}(\R^n_x)$ continuously into $H^{s}(\R^n_x)$, i.e. there exists a constant $C_{m,s} >0$ such that \begin{equation*}
\|T_a^m u\|_{H^s} \leq C_{m,s} \|a\|_{L^\infty}\|u\|_{H^s}.
\end{equation*} If $m \in \N_{\geq3}$, $s \in (0,1)$ and $a \in L^\infty(\R^n_x) \cap \Lip(\R^n_x)$ then $a-T_a^m$ maps $H^{-s}(\R^n_x)$ continuously into $H^{1-s}(\R^n_x)$, i.e. there exists a constant $C_{m,s}>0$ such that \begin{equation*}
\|au-T_a^mu\|_{H^{1-s}} \leq C_{m,s}\|a\|_{\Lip}\|u\|_{H^{-s}}.
\end{equation*}
$C_{m,s}$ is independent of $s$ if $s$ is chosen in a compact subset of $(0,1)$.
\end{prop}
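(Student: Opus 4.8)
The plan is to prove both mapping statements in Proposition \ref{MappingProp} by expanding the operators via the Littlewood-Paley decomposition and then invoking the spectral characterizations of Sobolev spaces (Propositions \ref{SobolevLP} and \ref{RevSobolevLP}) together with Bernstein's inequalities (Proposition \ref{PropBernstein}). The two claims are of different natures: the first is a crude boundedness statement that should follow from almost-orthogonality, whereas the second is a regularity \emph{gain} of one derivative and is where the Lipschitz hypothesis and the real work enter.

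For the first statement, I would write $T_a^m u = S_{m-1}a\, S_{m+2}u + \sum_{k\geq m+3} S_{k-3}a\,\Delta_k u$ and estimate each dyadic block. The crucial structural fact is that each summand $S_{k-3}a\,\Delta_k u$ has its spectrum contained in an annulus comparable to $\{|\xi|\sim 2^k\}$: since $\spec(S_{k-3}a)\subseteq\{|\xi|\leq C 2^{k-3}\}$ and $\spec(\Delta_k u)\subseteq\{|\xi|\sim 2^k\}$, the product is spectrally localized near $2^k$ because the low-frequency factor only shifts the annulus by a bounded proportion. This places us exactly in the setting of the converse Proposition \ref{RevSobolevLP}. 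Using $\|S_{k-3}a\,\Delta_k u\|_{L^2}\leq \|S_{k-3}a\|_{L^\infty}\|\Delta_k u\|_{L^2}\leq \|a\|_{L^\infty}\|\Delta_k u\|_{L^2}$, I multiply by $2^{ks}$, observe that the sequence $\{2^{ks}\|\Delta_k u\|_{L^2}\}_k$ is in $\ell^2$ with $\ell^2$-norm controlled by $\|u\|_{H^s}$ (Proposition \ref{SobolevLP}), and conclude $\|T_a^m u\|_{H^s}\leq C_{m,s}\|a\|_{L^\infty}\|u\|_{H^s}$; the first term $S_{m-1}a\,S_{m+2}u$ is a single low-frequency block handled identically.

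For the second statement, the object to control is the \emph{paraproduct remainder}, i.e. the difference between genuine multiplication by $a$ and the paraproduct operator. I would first note the telescoping decomposition of the true product $au=\sum_{k}(S_{k-3}a)\,\Delta_k u + \sum_k (a-S_{k-3}a)\,\Delta_k u$ and similarly handle the low-frequency correction coming from the modified first block of $T_a^m$; subtracting $T_a^m u$ leaves terms of two types. The terms in which a high frequency of $a$ meets the low or comparable frequency of $u$ (the ``reverse paraproduct'' and the ``diagonal/resonant'' parts) are precisely where the spectral support is no longer a clean annulus, so I would instead estimate them directly in $H^{1-s}$ using the telescoped form $a-S_{k-3}a=\sum_{j\geq k-3}\Delta_j a$ and the Lipschitz decay estimate $\|\Delta_j a\|_{L^\infty}\leq C 2^{-j}\|a\|_{\Lip}$ from Proposition \ref{CharLip}. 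Writing $au-T_a^m u$ as a double sum over $(j,k)$ and grouping by the output frequency, the $2^{-j}$ gain from each $\Delta_j a$ supplies exactly one extra derivative: summing $\sum_{j\geq k-3}2^{-j}$ against $\|\Delta_k u\|_{L^2}$ converts the $H^{-s}$ norm of $u$ into an $H^{1-s}$ bound on the remainder. The restriction $s\in(0,1)$ guarantees that the geometric series controlling the low-output-frequency contributions converges and that the $\ell^2$-summation in Proposition \ref{RevSobolevLP} is valid.

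The main obstacle I expect is the bookkeeping of the resonant and reverse interactions in the second claim, where after subtracting $T_a^m$ the remaining pieces are sums $\sum \Delta_j a\,\Delta_k u$ with $j$ and $k$ close or with $j$ large: here one must regroup according to the frequency of the \emph{output} rather than of either factor, verify that each reassembled block has controlled spectral support so that Proposition \ref{RevSobolevLP} applies, and check that the $2^{-j}\|a\|_{\Lip}$ decay beats the potential frequency growth $2^{(1-s)k}$ uniformly. Keeping the $s$-dependence of the constant uniform over compact subsets of $(0,1)$ is a matter of bounding the relevant geometric ratios $\sum_j 2^{(1-2s)j}$-type sums away from their endpoints, which is where I would be careful to record that $C_{m,s}$ stays bounded as claimed.
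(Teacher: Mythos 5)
Your proposal is correct and follows essentially the same route as the paper: the first claim via spectral localization of each block $S_{k-3}a\,\Delta_k u$ in an annulus of size $2^k$ plus Proposition \ref{RevSobolevLP}, and the second via the very decomposition the paper records in \eqref{eq:AuxDef1} as $(a-T_a^m)w=\Omega_1 w+\Omega_2 w$ (reverse paraproduct plus resonant part), the Lipschitz dyadic decay $\|\Delta_j a\|_{L^\infty}\leq C2^{-j}\|a\|_{\Lip}$ from Proposition \ref{CharLip}, and regrouping by output frequency. The only slips are cosmetic (e.g.\ $a-S_{k-3}a=\sum_{j\geq k-2}\Delta_j a$, an off-by-one in your index), and your remarks on where $s>0$ and $s<1$ are each needed match the paper's use of the two cases of Proposition \ref{RevSobolevLP}.
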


Next we state a positivity result for $T_a^m$.

\begin{prop} \label{prop:Pos} Let $a \in L^\infty(\R^n_x) \cap \Lip(\R^n_x)$ and suppose that $a(x) \geq \kappa >0$ for all $x \in \R^n_x$. Then, there exists a constant $m_0 = m_0(\kappa, \|a\|_{\Lip})$ such that \begin{equation*}
\real \la T_a^m u | u \ra_{L^2} \geq \frac{\kappa}{2}\|u\|_{L^2},
\end{equation*} for all $u \in L^2(\R^n_x)$ and $m \geq m_0$. A similar result is true for vector-valued functions if $a$ is replaced by a positive symmetric matrix.
\end{prop}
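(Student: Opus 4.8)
The plan is to reduce the positivity of $T_a^m$ to the \emph{pointwise} positivity of multiplication by the nonnegative function $a-\kappa$, exploiting that the modified paraproduct is built precisely so as to approximate multiplication by $a$ in $L^2$ once the low frequencies are handled correctly. First I would write $a=\kappa+b$ with $b:=a-\kappa\ge 0$, $b\in L^\infty(\R^n_x)\cap\Lip(\R^n_x)$. Since the low-pass operators fix constants (indeed $\chi_N(0)=\chi(0)=1$ gives $S_N\kappa=\kappa$ for every $N$), we have $S_{m-1}a=\kappa+S_{m-1}b$ and $S_{k-3}a=\kappa+S_{k-3}b$; substituting into \eqref{PPDef} and using $S_{m+2}u+\sum_{k\ge m+3}\Delta_k u=u$ yields
\begin{equation*}
T_a^m u=\kappa\,u+T_b^m u .
\end{equation*}
Thus it suffices to show $\real \la T_b^m u \,|\, u \ra_{L^2}\ge -\tfrac{\kappa}{2}\|u\|_{L^2}^2$ for $m$ large.

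The key observation is that $T_b^m$ differs from multiplication by $b$ only by a term that is small in $L^2$ when $m$ is large. Writing $bu=b\,S_{m+2}u+\sum_{k\ge m+3}b\,\Delta_k u$ and subtracting \eqref{PPDef} I would obtain
\begin{equation*}
bu-T_b^m u=(b-S_{m-1}b)\,S_{m+2}u+\sum_{k\ge m+3}(b-S_{k-3}b)\,\Delta_k u .
\end{equation*}
By Proposition \ref{CharLip}, $\|\Delta_\ell b\|_{L^\infty}\le C2^{-\ell}\|b\|_{\Lip}$, so summing tails gives $\|b-S_N b\|_{L^\infty}\le C2^{-N}\|b\|_{\Lip}$ for every $N\ge0$. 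Estimating the first term by $\|b-S_{m-1}b\|_{L^\infty}\|S_{m+2}u\|_{L^2}$ and the series by the triangle inequality followed by Cauchy--Schwarz in $k$ (using $\sum_{k\ge m+3}2^{-2(k-3)}\le C2^{-2m}$ together with $\sum_k\|\Delta_k u\|_{L^2}^2\le C\|u\|_{L^2}^2$), I expect
\begin{equation*}
\|bu-T_b^m u\|_{L^2}\le C\,2^{-m}\|b\|_{\Lip}\,\|u\|_{L^2}.
\end{equation*}

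Finally I would conclude: since $b\ge0$, one has $\real\la bu\,|\,u\ra_{L^2}=\int b\,|u|^2\,dx\ge0$, hence
\begin{equation*}
\real\la T_b^m u\,|\,u\ra_{L^2}=\int b\,|u|^2\,dx-\real\la bu-T_b^m u\,|\,u\ra_{L^2}\ge -C2^{-m}\|b\|_{\Lip}\,\|u\|_{L^2}^2 .
\end{equation*}
Choosing $m_0=m_0(\kappa,\|a\|_{\Lip})$ so that $C2^{-m_0}\|b\|_{\Lip}\le\kappa/2$ then gives $\real\la T_a^m u\,|\,u\ra_{L^2}\ge\kappa\|u\|_{L^2}^2-\tfrac{\kappa}{2}\|u\|_{L^2}^2=\tfrac{\kappa}{2}\|u\|_{L^2}^2$ for all $m\ge m_0$ (note $\|b\|_{\Lip}$ is controlled by $\kappa$ and $\|a\|_{\Lip}$). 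The matrix case is identical: with $B:=A-\kappa I\ge0$ symmetric one again has $T_A^m u=\kappa u+T_B^m u$ and the same $L^2$ approximation bound, while, splitting $u=v+iw$ into real and imaginary parts, $\real\la Bu\,|\,u\ra_{L^2}=\int\big(\langle B(x)v,v\rangle+\langle B(x)w,w\rangle\big)\,dx\ge0$ since $B(x)\ge0$. I expect no serious obstacle here; the only point requiring genuine care — and the whole reason for modifying the paraproduct — is that the extra low-frequency block $S_{m-1}b\,S_{m+2}u$ is exactly what makes $bu-T_b^m u$ (rather than merely $bu-T_b u$) small, since for the \emph{unmodified} paraproduct the low-frequency remainder $b\,S_2u$ is $O(\|b\|_{L^\infty}\|u\|_{L^2})$ and does not decay as $m\to\infty$.
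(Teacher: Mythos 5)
Your proof is correct. The paper itself gives no proof of Proposition \ref{prop:Pos} but defers to \cite{DSP2012}, and the argument there is essentially the one you give: write $\real\la T_a^m u\,|\,u\ral = \real\la au\,|\,u\ral - \real\la (a-T_a^m)u\,|\,u\ral$ and use the $O(2^{-m}\|a\|_{\Lip})$ bound on the $L^2$-operator norm of $a-T_a^m$ coming from $\|a-S_Na\|_{L^\infty}\lesssim 2^{-N}\|a\|_{\Lip}$, so your splitting $a=\kappa+b$ is only a cosmetic variant (note also that the stated lower bound should read $\tfrac{\kappa}{2}\|u\|_{L^2}^2$, as your scaling-consistent version has it).
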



The next proposition is needed since $T_a^m$ is not self-adjoint. However, the operator $(T_a^m-(T_a^m)^\ast) \partial_{x_j}$ is of order $0$ and maps, if $a$ is Lipschitz, $L^2$ continuously into $L^2$.  

\begin{prop} \label{Adjoint} Let $m \in \N$, $a \in L^\infty(\R^n_x) \cap \Lip(\R^n_x)$ and $u \in L^2(\R^n_x)$. Then, there exists a constant $C_{m} > 0$ such that \begin{equation*}
\|(T_a^m-(T_a^m)^\ast) \partial_{x_j}u\|_\Lt \leq C_{m}\|a\|_{\Lip(\R^n)}\|u\|_{L^2}.
\end{equation*} \end{prop}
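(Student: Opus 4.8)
The plan is to reduce the estimate to a dyadic commutator bound by expanding $T_a^m$ and its adjoint into their defining blocks and exploiting the spectral localization of each piece. First I would write, for $u \in L^2(\R^n_x)$,
\begin{equation*}
T_a^m \partial_{x_j} u = S_{m-1}a \, S_{m+2}\partial_{x_j} u + \sum_{k \geq m+3} S_{k-3}a \, \Delta_k \partial_{x_j} u,
\end{equation*}
and compute the formal adjoint blockwise, noting that $S_\ell$ and $\Delta_k$ are self-adjoint Fourier multipliers with real even symbols while multiplication by the real-valued $S_{k-3}a$ is self-adjoint. Thus $(T_a^m)^\ast \partial_{x_j} u = S_{m+2}(S_{m-1}a \cdot S_{m+2}\partial_{x_j}u) + \sum_{k\geq m+3}\Delta_k(S_{k-3}a\,\Delta_k\partial_{x_j}u)$ — more precisely, the adjoint moves the spectral projector onto the opposite factor. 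The operator $(T_a^m - (T_a^m)^\ast)\partial_{x_j}$ then becomes a sum over $k$ of commutator-type terms of the form $[\Delta_k, S_{k-3}a]\partial_{x_j}u$ (plus the analogous low-frequency term with $S_{m+2}$).

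The key step is to estimate each dyadic commutator $[\Delta_k, S_{k-3}a]\partial_{x_j}u$ in $L^2$. The standard mechanism is to write $\Delta_k = \varphi_k(D_x)$ with $\varphi_k(\xi) = \varphi(2^{-k}\xi)$, so that $[\Delta_k, S_{k-3}a]$ has an integral kernel whose $L^1$-norm carries a gain of one dyadic factor $2^{-k}$ coming from the smoothness of the symbol against the Lipschitz bound $\|\nabla_x(S_{k-3}a)\|_{L^\infty} \leq C\|a\|_{\Lip}$ furnished by Proposition~\ref{CharLip}. This $2^{-k}$ gain exactly compensates the derivative $\partial_{x_j}$ acting on $u$, which by Bernstein's inequalities (Proposition~\ref{PropBernstein}) costs a factor $2^{k+1}$ on the spectrally localized piece $\Delta_k \partial_{x_j} u$. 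Hence each term contributes $\lesssim \|a\|_{\Lip}\|\Delta_k u\|_{L^2}$ (after accounting for the finitely many neighbouring blocks that survive the spectral support conditions). Because the operators $\Delta_k$ have almost-orthogonal ranges, summing in $k$ via the Littlewood--Paley characterization (Proposition~\ref{SobolevLP} with $s=0$) and an $\ell^2$ almost-orthogonality argument yields the claimed bound $\|(T_a^m - (T_a^m)^\ast)\partial_{x_j}u\|_{L^2} \leq C_m \|a\|_{\Lip}\|u\|_{L^2}$.

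The main obstacle I anticipate is making the commutator gain rigorous and tracking the spectral bookkeeping: one must verify that after taking the adjoint, the difference $T_a^m - (T_a^m)^\ast$ genuinely assembles into commutators with the low-frequency coefficient $S_{k-3}a$ rather than leaving an uncontrolled term of positive order, and that the near-diagonal interactions (the finitely many $k'$ with $|k-k'| \leq 3$ whose Fourier supports overlap) are handled uniformly. The cleanest route is to invoke the commutator kernel estimate directly: writing $[\Delta_k, b]v(x) = \int 2^{kn}\check\varphi(2^k(x-y))\,(b(y)-b(x))\,v(y)\,dy$ with $b = S_{k-3}a$, the mean-value inequality together with $\|\nabla b\|_{L^\infty} \leq C\|a\|_{\Lip}$ and $\int |z|\,2^{kn}|\check\varphi(2^k z)|\,dz \lesssim 2^{-k}$ gives the $2^{-k}$ gain by Schur's test, and the rest is summation. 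This is the same cancellation that underlies the Coifman--Meyer framework cited in the introduction, here in its simplest one-commutator incarnation.
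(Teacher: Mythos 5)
Your proposal is correct and follows what is essentially the intended route: the paper itself does not prove Proposition~\ref{Adjoint} but defers to \cite{DSP2012}, and your reduction of $(T_a^m-(T_a^m)^\ast)\partial_{x_j}$ to the dyadic commutators $[\Delta_k,S_{k-3}a]\partial_{x_j}$, estimated via the first-order commutator bound of Coifman--Meyer type (the kernel/mean-value argument giving the $2^{-k}$ gain against $\|\nabla_x(S_{k-3}a)\|_{L^\infty}\leq C\|a\|_{\Lip}$) and then summed by spectral almost-orthogonality, is exactly the mechanism the paper uses elsewhere, e.g.\ in the appendix proof of Lemma~\ref{lemmacomm} via \cite[Th.~35]{CM1978}. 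The only slip is your first displayed formula for $(T_a^m)^\ast\partial_{x_j}u$, which inserts an extra inner projector; your subsequent correction (the adjoint simply moves $S_{m+2}$, resp.\ $\Delta_k$, onto the other factor) is the right one, and the spectral localization you invoke to replace $u$ by finitely many neighbouring blocks $\Delta_{k'}u$ is what makes the $k$-sum square-summable.
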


\subsection{Auxiliary estimates for $a-T_a^m$} \label{sec:Aux1}

Let $m\geq 3$. We set \begin{equation} \label{eq:AuxDef1}
(a-T_a^m)w = \sum\limits_{k \geq m} \Delta_ka S_{k-3}w + \sum\limits_{k \geq m}\big( \sum\limits_{\substack{j \geq 0 \\ |j-k| \leq 2}} \Delta_k a \Delta_j w \big) = \Omega_1 w + \Omega_2 w.
\end{equation} For our proof of the energy estimate from which we derive the stability result we need some estimates for terms involving $\Delta_\nu((a-T_a^m)w)$. To handle these terms, we introduce a second Littlewood-Paley decomposition depending on a parameter $\mu$ and we look at $\sum_{\mu \geq 0} \Delta_\nu((a-T_a^m)w_\mu)$. To derive estimates for those terms we need appropriate estimates for $\Delta_\nu\Omega_1 w_\mu$ and $\Delta_\nu \Omega_2 w_\mu$. Let us first analyze the spectra of $\Delta_\nu \Omega_1 w$ and $\Dn \Omega_2 w$: From the definition of $S_k$ and $\Delta_k$ in Section \ref{sec:LPBPP} we see that 
\begin{equation*}
\spec(\Delta_k a S_{k-3}w) \subseteq \{2^{k-2} \leq |\xi| \leq 2^{k+2} \}
\end{equation*} 
and therefore 
\begin{equation*}
\Dn \Omega_1 w = \sum\limits_{\substack{k \geq m \\ |k-\nu|\leq 2}} \Dn(\Delta_k a S_{k-3}w)
\end{equation*} since $\Dn (\Delta_k a S_{k-3}w) \equiv 0$ for $|\nu - k| \geq 3$. Replacing now $w$ by $w_\mu$ we get \begin{equation*}
\spec(S_{k-3}w_\mu) \subseteq \left\{ \begin{array}{ccl}
\emptyset &:& k \leq \mu +1, \\[0.3 cm]
\{2^{\mu-1} \leq |\xi| \leq 2^{\mu+1}\} &:& k \geq \mu+2,
\end{array} \right.
\end{equation*} and obtain from this 
\begin{equation*}
\spec(\Delta_k a S_{k-3}w_\mu) \subseteq \left\{ \begin{array}{ccl}
\emptyset &:& k \leq \mu + 1, \\[0.3 cm]
\{ |\xi| \leq 2^{k+2} \} &:& k = \mu+2,\\[0.3 cm]
\{2^{k-2} \leq |\xi| \leq 2^{k+2} \} &:& k \geq \mu+2.
\end{array} \right. 
\end{equation*} 
With this we get 
$$
\Dn \Omega_1 w_\mu = \sum\limits_{\substack{k \geq \max\{m,\,\mu+2 \}\\ |\nu -k|\leq 1}} \Dn (\Delta_k a S_{k-3}w_\mu).
$$
Further, we also get $\Dn \Omega_1 w_\mu \equiv 0$ for $\nu \leq \mu-1$. Now we look at $\Delta_\nu \Omega_2 w_\mu$. We have 
\begin{eqnarray*}
\Delta_\nu \Omega_2 w_\mu &=& \Delta_\nu\big(\sum\limits_{k \geq m}\big( \sum\limits_{\substack{j \geq 0 \\ |j-k| \leq 2}} \Delta_k a \Delta_j w_\mu \big)\big) \\
&=& \Delta_\nu\big( \sum\limits_{ |\mu-j| \leq 1} \;\sum\limits_{ |j-k|\leq 2} \Delta_k a \Delta_j w_\mu\big)
\end{eqnarray*} 
since 
\begin{equation*}
\spec(\Delta_j(\Delta_\mu w)) \subseteq \left\{ \begin{array}{ccl}
\emptyset &:& |j-\mu| \geq 2, \\[0.3 cm]
\{2^{\mu-1} \leq |\xi| \leq 2^{\mu+1}\} &:& |j-\mu| \leq 1.
\end{array} \right.
\end{equation*} 
From that we get \begin{equation} \label{aux:est15}
\Dn \Omega_2 w_\mu = \Delta_\nu \big(\sum\limits_{|\mu-j|\leq 1} \;\sum\limits_{\substack{k \geq m \\ |k-j| \leq 2}} \Delta_k a \Delta_j(\Delta_\mu w)\big)
\end{equation} with 
\begin{equation*}
\spec ( \Dn \Omega_2 w_\mu ) \subseteq \{|\xi| \leq 2^{\mu+5}\}, \quad \nu \leq \mu + 5.
\end{equation*} For all $\nu \geq \mu+6$ we have $\Delta_\nu \Omega_2 w_\mu \equiv 0$.

We prove now some technical lemmas which we will use later on.

\begin{lem} \label{lem:Omega1} Let $s' \in (0,1)$, $m \in \N_0$, $a \in L^\infty(\R^n_x) \cap \Lip(\R^n_x)$ and $w \in L^2(\Rn)$. Then, there exist a constant $C>0$ and a sequence $\{c_\nu^{(\mu)}\}_{\nu \in \N_0} \in l^2(\N_0)$, depending on $\Delta_\mu w$, with $\|\{c_\nu^{(\mu)}\}_\nu\|_{l^2} \leq 1$ for all $\mu \geq 0$, such that \begin{equation} \label{eq:Aux1}
\|\Dn \Omega_1 w_\mu\|_\Lt \leq C 2^{-\nu(1-s')} c_{\nu}^{(\mu)} \|w_\mu\|_{L^2}.
\end{equation} \end{lem}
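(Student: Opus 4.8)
The plan is to estimate $\Dn\Omega_1 w_\mu$ termwise, using the spectral localization already established above, and then to \emph{define} the coefficients $c_\nu^{(\mu)}$ as suitably normalized ratios, spending the gap $s'>0$ to get the $l^2$-summability. First I would record the per-term bound. By the spectral analysis carried out just before the statement,
$$\Dn\Omega_1 w_\mu = \sum_{\substack{k \geq \max\{m,\,\mu+2\} \\ |\nu-k|\leq 1}} \Dn(\Delta_k a\, S_{k-3}w_\mu),$$
a sum of at most three terms. Since every Littlewood--Paley multiplier is bounded by $1$ on the Fourier side, Plancherel gives $\|\Dn g\|_{\Lt}\leq \|g\|_{\Lt}$ and $\|S_{k-3}w_\mu\|_{\Lt}\leq\|w_\mu\|_{\Lt}$; combining these with H\"older's inequality and the Lipschitz estimate $\|\Delta_k a\|_{L^\infty}\leq C2^{-k}\|a\|_{\Lip}$ from Proposition \ref{CharLip} yields
$$\|\Dn(\Delta_k a\, S_{k-3}w_\mu)\|_{\Lt} \leq \|\Delta_k a\|_{L^\infty}\,\|S_{k-3}w_\mu\|_{\Lt}\leq C2^{-k}\|a\|_{\Lip}\,\|w_\mu\|_{\Lt}.$$
Because $|\nu-k|\leq1$ forces $2^{-k}\leq 2\cdot 2^{-\nu}$, summing the (at most three) contributions gives $\|\Dn\Omega_1 w_\mu\|_{\Lt}\leq C_0\|a\|_{\Lip}\,2^{-\nu}\|w_\mu\|_{\Lt}$ for a purely numerical $C_0$.

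Then I would factor $2^{-\nu}=2^{-\nu(1-s')}\,2^{-\nu s'}$ and set
$$c_\nu^{(\mu)} := \frac{\|\Dn\Omega_1 w_\mu\|_{\Lt}}{C\,2^{-\nu(1-s')}\|w_\mu\|_{\Lt}}$$
(with $c_\nu^{(\mu)}:=0$ when $w_\mu=0$), where $C$ is a constant to be fixed. By construction \eqref{eq:Aux1} then holds with equality, so it remains only to choose $C$ so that $\|\{c_\nu^{(\mu)}\}_\nu\|_{l^2}\leq1$ uniformly in $\mu$. Using the bound of the previous paragraph,
$$\sum_{\nu\geq0}\big(c_\nu^{(\mu)}\big)^2 \leq \frac{C_0^2\|a\|_{\Lip}^2}{C^2}\sum_{\nu\geq0}2^{-2\nu s'} = \frac{C_0^2\|a\|_{\Lip}^2}{C^2}\cdot\frac{1}{1-2^{-2s'}},$$
the geometric series converging precisely because $s'>0$. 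Choosing $C^2 := C_0^2\|a\|_{\Lip}^2/(1-2^{-2s'})$ makes the right-hand side at most $1$, which proves the claim; thus $C$ depends only on $s'$ and $\|a\|_{\Lip}$.

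The calculation is otherwise routine, so the only real point to watch is the construction of the normalizing sequence: one must check that the $l^2$-norm of $\{c_\nu^{(\mu)}\}_\nu$ is bounded \emph{uniformly} in $\mu$, not merely finite for each fixed $\mu$. Here the spectral support property $\Dn\Omega_1 w_\mu\equiv 0$ for $\nu\leq\mu-1$ is reassuring, since the sum over $\nu$ is really a geometric tail beginning near $\nu=\mu$, and such a tail $\sum_{\nu\geq\mu+1}2^{-2\nu s'}$ is dominated by the full series independently of $\mu$; the uniformity is therefore automatic. I regard the strict positivity of $s'$ as the crux: with $s'=0$ the series diverges, the gain $2^{-\nu s'}$ disappears, and no such $l^2$ sequence could be extracted from a mere $2^{-\nu}$ decay.
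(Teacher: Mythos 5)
Your proof is correct, and it takes a genuinely more elementary route than the paper's. Both arguments start from the same reduction of $\Dn\Omega_1 w_\mu$ to a bounded number of terms $\Dn(\Delta_k a\, S_{k-3}w_\mu)$ with $|k-\nu|$ bounded, and both use the Lipschitz decay $\|\Delta_k a\|_{L^\infty}\leq C2^{-k}\|a\|_{\Lip}$; they diverge in how the factor $\|S_{k-3}w_\mu\|_{\Lt}$ is treated and how the normalizing sequence is built. The paper expands $S_{k-3}w_\mu$ into its Littlewood--Paley pieces, inserts the weights $2^{\pm js'}$, recognizes the resulting sequence $f_k^{(\mu)}$ as a convolution, controls its $l^2$ norm by Young's inequality (this is where $\|\{2^{-ks'}\}_k\|_{l^1}\leq C/s'$ uses $s'>0$), and only at the end passes from $\|w_\mu\|_{H^{-s'}}$ to $\|w_\mu\|_{\Lt}$ by the embedding $L^2\hookrightarrow H^{-s'}$. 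You instead use the crude multiplier bound $\|S_{k-3}w_\mu\|_{\Lt}\leq\|w_\mu\|_{\Lt}$, obtain the clean decay $C\|a\|_{\Lip}2^{-\nu}\|w_\mu\|_{\Lt}$, define $c_\nu^{(\mu)}$ as the literal normalized ratio, and spend the positivity of $s'$ on the geometric series $\sum_\nu 2^{-2\nu s'}$ instead; your observation that the resulting $l^2$ bound is uniform in $\mu$ (indeed your $c_\nu^{(\mu)}$ is dominated by a fixed sequence $C2^{-\nu s'}$ independent of $\mu$) is exactly the point that needs checking, and it holds. The shortcut is legitimate precisely because the input is a single dyadic block $w_\mu=\Delta_\mu w$ of an $L^2$ function, so the $L^2$ operator bound loses nothing; the paper's convolution argument is the variant that survives when the estimate is run on a general $w$ lying only in $H^{-s'}$, and its intermediate bound in terms of $\|w_\mu\|_{H^{-s'}}$ is the sharper statement. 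For the way the lemma is actually used later (applied to $\partial_{x_j}v_\mu$ and converted to $2^{\mu}\|v_\mu\|_{\Lt}$ via Bernstein's inequality), your version is fully sufficient, and your constant's dependence on $s'$ and $\|a\|_{\Lip}$ matches the paper's.
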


\begin{proof} From our considerations above we have that \begin{equation*}
\Dn \Omega_1 w_\mu = \sum\limits_{\substack{ |k-\nu|\leq 2}} \Dn(\Delta_k a S_{k-3}w_\mu)
\end{equation*} and therefore \begin{eqnarray}
\nonumber \|\Dn(\Omega_1 w_\mu)\|_{\Lt} &\leq& \sum\limits_{k: |k-\nu| \leq 1} \|\Delta_k a S_{k-3}w_\mu\|_\Lt \\
\nonumber &\leq& \sum\limits_{|k-\nu| \leq 2} \|\Delta_k a\|_{L^\infty} \|S_{k-3}w_\mu\|_\Lt \\
\nonumber &\leq& C \sum\limits_{|k-\nu| \leq 2} \|a\|_{\Lip} 2^{-k} \sum\limits_{j \leq k} \|\Delta_j w_\mu\|_\Lt \\
\nonumber &=& C \|a\|_{\Lip} \sum\limits_{ |k-\nu| \leq 2} 2^{-k} \sum\limits_{j \leq k} 2^{ks'}2^{-ks'}2^{js'}\underbrace{2^{-js'}\|\Delta_j w_\mu\|_\Lt}_{=: \eps_j^{(\mu)}} \\
\nonumber &\leq& C \|a\|_{\Lip} \sum\limits_{ |k-\nu| \leq 2} 2^{-(1-s')k} \underbrace{\sum\limits_{j \leq k} 2^{-(k-j)s'}\eps_j^{(\mu)}}_{=: f_k^{(\mu)}} \\
\nonumber &=& C \sum\limits_{ |k-\nu| \leq 2} \|a\|_{\Lip} 2^{-(1-s')k} f_k^{(\mu)} \\
\label{eq:Aux2} &\leq& C \|a\|_{\Lip} 2^{-(1-s')\nu}  \sum\limits_{ |k-\nu| \leq 2}f_k^{(\mu)},
\end{eqnarray} where $\{\eps_j^{(\mu)}\}_{j \in \N_0} \in l^2(\N_0)$ with $\|\{\eps_j^{(\mu)}\}\|_{l^2} \approx \|w_\mu\|_{H^{-s'}}$; see Proposition \ref{SobolevLP}. The sequence $\{f_k^{(\mu)}\}_{k \in \N_0}$ is a convolution of the sequences $\{\eps_j^{(\mu)}\}_{j \in \N_0}$ and $d_k := 2^{-ks'}$. Using Young's inequality, we obtain \begin{equation*}
\|\{f_k^{(\mu)}\}_k\|_{l^2} = \|\{\{\eps_j^{(\mu)}\} \ast_{(j)} \{d_k\}\}_k\|_{l^2} \leq \|\{d_k\}_k\|_{l^1} \|\{\eps_j^{(\mu)}\}_j\|_{l^2}.
\end{equation*} From the formula of the geometric series and the integral criterion, we obtain \begin{equation*}
\|\{d_k\}_k\|_{l^1} \leq \frac{1}{1-2^{-s'}} \leq \frac{C}{s'}
\end{equation*} and hence, we get \begin{equation*}
\|\{f_k\}_k\|_{l^2} \leq \frac{C}{s'} \|w_\mu\|_{H^{-s'}}.
\end{equation*} We define 
\begin{equation*}
c_\nu := \frac{f_{\nu-2}^{(\mu)} + f_{\nu-1}^{(\mu)} + f_\nu^{(\mu)} + f_{\nu+1}^{(\mu)}+ f_{\nu+2}^{(\mu)}}{C_{s'}\|w_\mu\|_{H^{-s'}}},
\end{equation*} 
where $C_{s'}$ can be chosen such that $\sum_{\nu \geq 0} (c_\nu^{(\mu)})^2 \leq 1$. With this, we get from \eqref{eq:Aux2} \begin{equation*}
\|\Dn \Omega_1 w_\mu\|_\Lt \leq C \|a\|_{\Lip} 2^{-(1-s')\nu} c_{\nu}^{(\mu)} \|w_\mu\|_{H^{-s'}}.
\end{equation*} Using the embedding of $L^2$ into $H^{-s'}$, we finally obtain \eqref{eq:Aux1}. \end{proof}

%

The next lemma deals with the estimate of $\Delta_\nu \Omega_2 w$.

\begin{lem} \label{lem:Omega2} Let $s' \in (0,1)$, $m \in \N_0$, $a \in L^\infty(\R^n_x) \cap \Lip(\R^n_x)$ and $w \in L^2(\R^n_x)$. Then, there exist a constant $C >0$ and a sequence $\{\tilde{c}_\nu^{(\mu)}\}_{\nu \in \N_0}\ \in l^2(\N_0)$, depending on $\Delta_\mu w$, with $\|\{\tilde{c}_\nu^{(\mu)}\}_\nu\|_{l^2} \leq 1$ for all $\mu \geq 1$, such that \begin{equation*}
\|\Dn \Omega_2 w_\mu\|_\Lt \leq C \|a\|_{\Lip} \tilde{c}_\nu^{(\mu)} 2^{-\mu} \|w_\mu\|_{L^2}.
\end{equation*} \end{lem}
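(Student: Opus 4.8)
My plan is to avoid estimating the blocks $\Dn\Omega_2 w_\mu$ one frequency at a time (which would cost a spurious factor growing with $\mu$) and instead to bound the whole function $\Omega_2 w_\mu$ in $\Lt$ and then redistribute this single estimate over $\nu$ by Littlewood--Paley almost-orthogonality.

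First I would record a crude $\Lt$ bound. Starting from the localized representation \eqref{aux:est15},
\[
\Dn\Omega_2 w_\mu = \Delta_\nu\Big(\sum_{|\mu-j|\leq 1}\;\sum_{\substack{k\geq m\\ |k-j|\leq 2}}\Delta_k a\,\Delta_j(\Delta_\mu w)\Big),
\]
I note that the indices obey $|j-\mu|\leq 1$ and hence $|k-\mu|\leq 3$, so the double sum has a number of terms bounded independently of $\mu$. Taking $\Lt$-norms, using that $\Delta_\nu$ and $\Delta_j$ are bounded on $\Lt$ uniformly, Hölder's inequality, and Proposition \ref{CharLip} in the form $\|\Delta_k a\|_{L^\infty}\leq C2^{-k}\|a\|_{\Lip}$ together with $2^{-k}\leq 2^{3}2^{-\mu}$, I obtain
\[
\|\Omega_2 w_\mu\|_\Lt \leq C\|a\|_{\Lip}\,2^{-\mu}\,\|w_\mu\|_\Lt .
\]

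The key step is then to turn this into an $l^2$-summable family in $\nu$. Since $\Omega_2 w_\mu\in\Lt=H^0$, Proposition \ref{SobolevLP} (with $s=0$) furnishes an almost-orthogonality constant $C_0$ with
\[
\sum_{\nu\geq 0}\|\Dn\Omega_2 w_\mu\|_\Lt^2 \leq C_0\,\|\Omega_2 w_\mu\|_\Lt^2 \leq C_0\,C^2\,\|a\|_{\Lip}^2\,2^{-2\mu}\,\|w_\mu\|_\Lt^2 .
\]
Setting $\tilde C:=\sqrt{C_0}\,C$ and (for $w_\mu\neq 0$, the case $w_\mu=0$ being trivial) defining
\[
\tilde c_\nu^{(\mu)} := \frac{\|\Dn\Omega_2 w_\mu\|_\Lt}{\tilde C\,\|a\|_{\Lip}\,2^{-\mu}\,\|w_\mu\|_\Lt},
\]
the previous display yields $\sum_\nu(\tilde c_\nu^{(\mu)})^2\leq 1$, while the definition is exactly the claimed inequality $\|\Dn\Omega_2 w_\mu\|_\Lt\leq \tilde C\|a\|_{\Lip}\tilde c_\nu^{(\mu)}2^{-\mu}\|w_\mu\|_\Lt$ (renaming $\tilde C$ to $C$). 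This produces the normalized family with the required uniform $l^2$-bound for every $\mu\geq 1$, which is the asserted range.

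The step I expect to be the real obstacle is precisely resisting a term-by-term estimate: the function $\Dn\Omega_2 w_\mu$ is in general nonzero for every $\nu\leq\mu+5$, and each block only admits the bound $C\|a\|_{\Lip}2^{-\mu}\|w_\mu\|_\Lt$, so a naive count of the $\mu+6$ nonvanishing frequencies would inflate the $l^2(\nu)$-norm by $\sqrt{\mu+6}$ and destroy the uniform bound $\|\{\tilde c_\nu^{(\mu)}\}\|_{l^2}\leq 1$. Controlling the full $l^2(\nu)$-sum at once through almost-orthogonality against the single norm $\|\Omega_2 w_\mu\|_\Lt$ is exactly what removes this spurious $\sqrt{\mu}$ loss while retaining the sharp $2^{-\mu}$ decay with an $l^2$-normalized coefficient sequence.
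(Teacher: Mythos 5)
Your proof is correct and follows essentially the same route as the paper: both estimate the full norm $\|\Omega_2 w_\mu\|_{L^2}$ via the triangle inequality on the finitely many terms of \eqref{aux:est15} together with $\|\Delta_k a\|_{L^\infty}\leq C2^{-k}\|a\|_{\Lip}$ and $|k-\mu|\leq 3$, and then redistribute this single bound over $\nu$ through the Littlewood--Paley ($L^2=H^0$) almost-orthogonality, normalizing to obtain the $l^2$-bounded family $\{\tilde c_\nu^{(\mu)}\}_\nu$. Your explicit warning against the term-by-term estimate and the resulting $\sqrt{\mu}$ loss is exactly the point the paper's construction of $\tilde c_\nu^{(\mu)}=c_\nu^{(\mu)}/\|\Omega_2 w_\mu\|_{L^2}$ is designed to avoid.
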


\begin{proof} Straightforward computations on \eqref{aux:est15} show that $\Omega_2w \in L^2(\R^n_x)$ if $w \in L^2(\R^n_x)$. Hence, there exists a sequence $\{c_\nu^{(\mu)}\}_{\nu \in \N_0}$, depending on $w_\mu$, with $\|\{c_\nu^{(\mu)}\}_\nu\|_{l^2} \approx \|\Omega_2 w_\mu\|_{L^2}$. From \eqref{aux:est15}, we obtain 
\begin{eqnarray*}
\|\Dn \Omega_2 w_\mu\|_\Lt &\leq&  \tilde{c}_\nu^{(\mu)}\|\Omega_2 w_\mu\|_\Lt \\
&\leq& \tilde{c}_\nu^{(\mu)}\sum\limits_{|\mu-j|\leq 1} \sum\limits_{\substack{k \geq m \\ |k-j| \leq 2}} \|\Delta_k a \Delta_j(\Delta_\mu w)\|_{L^2} \\
&\leq&  \tilde{c}_\nu^{(\mu)} \sum\limits_{|j-\mu|\leq 1} \sum\limits_{\substack{k \geq m \\ |j-k|\leq 2}} 2^{-k} \|a\|_{\Lip} \|w_\mu\|_\Lt \\
&\leq&  \|a\|_{\Lip} \tilde{c}_\nu^{(\mu)} 2^{-\mu} \|w_\mu\|_\Lt,
\end{eqnarray*} where $\tilde{c}_\nu^{(\mu)} = c_\nu^{(\mu)} / \|\Omega_2 w_\mu\|_{L^2}$. By construction we have $\sum_{\nu \geq 0} (\tilde{c}_\nu^{(\mu)})^2 \leq 1$ for all $\mu \geq 0$. This concludes the proof. \end{proof}

The next proposition is at the very heart of our proof and contains information about the behavior of the Littlewood-Paley pieces of $(a-T_a)w$.

\begin{prop} \label{prop:AuxP1} Let $s \in (0,1)$, $m \in \N_0$, $a \in L^\infty(\R^n_x)\cap \Lip(\R^n_x)$, $\alpha>0$ and $t \in [0,\frac{7}{8}\sigma]$, $\sigma := \frac{1-s}{\alpha}$. Then there exists a constant $C_{s}>0$ such that, for all $w \in \pazocal H$, we have \begin{eqnarray*}
&& \sum\limits_{\nu \geq 0} 2^{-(s+\alpha t)\nu}\la \partial_{x_i} \partial_t v_\nu(t,\cdot) | \Delta_\nu((a-T_a^m)\partial_{x_j} w(t,\cdot)) \ral \\
&& \qquad \qquad \leq \frac{1}{N} \sum\limits_{\nu \geq 0} \|\partial_t v_\nu(t,\cdot)\|_\Lt^2 + C_s N \sum\limits_{\nu \geq 0} 2^{2\nu}\|v_\nu(t,\cdot)\|_\Lt^2
\end{eqnarray*} for every $N >0$ and with $v_\nu = 2^{-(s+ \alpha t)\nu}w_\nu$. \end{prop}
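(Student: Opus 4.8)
The plan is to bound the left-hand side by $C_s\big(\sum_\nu\|\partial_t v_\nu\|_{\Lt}^2\big)^{1/2}\big(\sum_\nu 2^{2\nu}\|v_\nu\|_{\Lt}^2\big)^{1/2}$ and then to split this product by Young's inequality $XY\le N^{-1}X^2+\tfrac N4Y^2$, which (absorbing $C_s$ into the factor multiplying $Y$) produces exactly the two terms on the right with the stated $N$-dependence. Write $a_\nu:=\|\partial_t v_\nu(t,\cdot)\|_{\Lt}$ and $b_\nu:=2^\nu\|v_\nu(t,\cdot)\|_{\Lt}$. Since $\partial_t v_\nu$ is spectrally supported in the same dyadic shell as $w_\nu$, Bernstein's inequality (Proposition \ref{PropBernstein}) gives $\|\partial_{x_i}\partial_t v_\nu\|_{\Lt}\le 2^{\nu+1}a_\nu$, so by Cauchy--Schwarz each summand is at most $2^{\nu+1}2^{-(s+\alpha t)\nu}a_\nu\|\Dn((a-T_a^m)\partial_{x_j}w)\|_{\Lt}$. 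I then insert a second Littlewood--Paley decomposition $\partial_{x_j}w=\sum_{\mu\ge0}\partial_{x_j}w_\mu$ together with the splitting $(a-T_a^m)=\Omega_1+\Omega_2$ of \eqref{eq:AuxDef1}, reducing everything to the families $\Dn\Omega_1(\partial_{x_j}w_\mu)$ and $\Dn\Omega_2(\partial_{x_j}w_\mu)$, for which Lemmas \ref{lem:Omega1} and \ref{lem:Omega2}, applied with $W=\partial_{x_j}w$, supply the bounds.

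For the $\Omega_1$-piece the decisive point is to keep the regularity gain recorded inside the proof of Lemma \ref{lem:Omega1}, namely $\|\Dn\Omega_1 W_\mu\|_{\Lt}\le C2^{-(1-s')\nu}c_\nu^{(\mu)}\|W_\mu\|_{H^{-s'}}$, rather than its weaker $L^2$ consequence. As $W_\mu=\partial_{x_j}w_\mu$ is localized at frequency $2^\mu$, Proposition \ref{SobolevLP} gives $\|W_\mu\|_{H^{-s'}}\le C2^{(1-s')\mu}\|w_\mu\|_{\Lt}$, and substituting $\|w_\mu\|_{\Lt}=2^{(s+\alpha t)\mu}\|v_\mu\|_{\Lt}$ to express everything through $b_\mu$ collapses the $\nu,\mu$ powers into $2^{-\rho(\nu-\mu)}$ with $\rho:=(s+\alpha t)-s'$. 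Recalling that $\Dn\Omega_1 w_\mu\equiv0$ for $\nu\le\mu-1$, this kernel is supported on $\nu\ge\mu$, so the resulting series converges provided $\rho>0$; choosing $s':=s/2$ (admissible, since $0<s/2<s\le s+\alpha t$) gives $\rho\ge s/2$ uniformly in $t$, and the Lemma's constant $\sim 1/s'$ stays under control.

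For the $\Omega_2$-piece I apply Lemma \ref{lem:Omega2} to $W=\partial_{x_j}w$; there the factor $2^{-\mu}$ exactly compensates the cost $2^{\mu}$ of $\partial_{x_j}$, and after the same conversion to $b_\mu$ the kernel carries the power $2^{(1-s-\alpha t)(\nu-\mu)}$. The relevant support restriction is now $\Dn\Omega_2 w_\mu\equiv0$ for $\nu\ge\mu+6$, i.e. $\nu-\mu\le5$; since $t\le\tfrac78\sigma$ forces $s+\alpha t\le\tfrac78+\tfrac18 s<1$, the exponent $1-s-\alpha t$ is positive, so the kernel is bounded above on its support and decays as $\nu-\mu\to-\infty$, giving a summable series with a constant depending only on $s$.

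In both cases the contribution has the form $\sum_{\nu,\mu}k_{\nu-\mu}\,c_\nu^{(\mu)}a_\nu b_\mu$, where $k_l$ decays geometrically over the admissible range of the difference $l=\nu-\mu$ (namely $l\ge0$ for $\Omega_1$ and $l\le5$ for $\Omega_2$) and $\|\{c_\nu^{(\mu)}\}_\nu\|_{\ell^2}\le1$. I estimate it one diagonal at a time: fixing $l$, I bound $\sup_\mu|c_{\mu+l}^{(\mu)}|\le1$ and apply Cauchy--Schwarz in $\mu$ to get $\sum_\mu a_{\mu+l}b_\mu\le\|a\|_{\ell^2}\|b\|_{\ell^2}$, whence the total is at most $\big(\sum_l k_l\big)\|a\|_{\ell^2}\|b\|_{\ell^2}\le C_s\|a\|_{\ell^2}\|b\|_{\ell^2}$, the Lipschitz norm $\|a\|_{\Lip}$ and all numerical constants being absorbed into $C_s$; Young's inequality then yields the claimed estimate. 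The main obstacle---and the very reason the $C^2$-in-$x$ hypothesis of \cite{DSP2009} can now be dropped---is precisely the $\Omega_1$ estimate: the crude $L^2$ bound would lose a full $x$-derivative and make the $\nu$-series diverge, and it is only by retaining the $H^{-s'}$-smoothing of $a-T_a^m$ together with the freedom to take $s'<s$ that one recovers the summable off-diagonal decay $2^{-\rho(\nu-\mu)}$.
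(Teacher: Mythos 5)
Your proof is correct and follows essentially the same route as the paper's: the second microlocalization in $\mu$, the $\Omega_1+\Omega_2$ splitting, Bernstein's inequality, and crucially the $H^{-s'}$ form of Lemma \ref{lem:Omega1} with $s'<s$ to produce the summable off-diagonal decay $2^{-((s+\alpha t)-s')(\nu-\mu)}$ on $\nu\geq\mu$, with the complementary one-sided support and the positivity of $1-s-\alpha t$ handling the $\Omega_2$ piece. The only (cosmetic) difference is that you sum diagonals via a Schur-type argument and apply Young's inequality once at the end, whereas the paper applies the weighted Young inequality termwise and then sums the geometric series.
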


The proof of this proposition can be found in the appendix. Following the same ideas one can also prove

\begin{prop} Let $s \in (0,1)$, $m \in \N_0$, $a \in L^\infty(\R^n_x)\cap \Lip(\R^n_x)$, $\alpha>0$ and $t \in [0,\frac{7}{8}\sigma]$, $\sigma := \frac{1-s}{\alpha}$. Then there exists a constant $C_{s}>0$ such that, for all $w \in \pazocal H$, we have \begin{eqnarray*}
\sum\limits_{\nu \geq 0} 2^{-(s+\alpha t)\nu} \nu \la \partial_{x_i} v_\nu(t,\cdot) | \Delta_\nu((a-T_a^m)\partial_{x_j} w(t,\cdot)) \ral
\leq C_s \sum\limits_{\nu \geq 0} 2^{2\nu}\|v_\nu(t,\cdot)\|_\Lt^2
\end{eqnarray*} for every $N>0$ and with $v_\nu = 2^{-(s+\alpha t)\nu}w_\nu$. \end{prop}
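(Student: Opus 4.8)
The plan is to follow the proof of Proposition~\ref{prop:AuxP1} almost verbatim; the only differences are that the first slot of the inner product now carries the plain field $v_\nu$ (not $\partial_t v_\nu$) together with the polynomial weight $\nu$, and that the target is the single quadratic term $C_s\sum_\nu 2^{2\nu}\|v_\nu\|_\Lt^2$. Abbreviate the exponent by $b:=s+\alpha t$ and set $X_\nu:=2^\nu\|v_\nu\|_\Lt$, so that the right-hand side is exactly $\sum_\nu X_\nu^2$. First I would record the three structural facts that drive everything: by Bernstein's inequality (Proposition~\ref{PropBernstein}), $\|\partial_{x_i}v_\nu\|_\Lt\le 2^{\nu+1}\|v_\nu\|_\Lt$ and $\|\partial_{x_j}w_\mu\|_\Lt\le 2^{\mu+1}\|w_\mu\|_\Lt$; the identity $\|w_\mu\|_\Lt=2^{b\mu}\|v_\mu\|_\Lt=2^{-(1-b)\mu}X_\mu$ converts all $w$-norms into the $X_\mu$; and, crucially, the restriction $t\in[0,\tfrac78\sigma]$ with $\sigma=\tfrac{1-s}{\alpha}$ forces $b\le\tfrac{7+s}{8}<1$, so that $1-b\ge\tfrac{1-s}{8}>0$. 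This strictly positive gap, together with $b\ge s>0$, is what will make the weight $\nu$ harmless.

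Next I would insert the second Littlewood--Paley decomposition $\partial_{x_j}w=\sum_{\mu\ge0}\partial_{x_j}w_\mu$ and split, as in \eqref{eq:AuxDef1}, $\Delta_\nu((a-T_a^m)\partial_{x_j}w)=\sum_\mu\big(\Dn\Omega_1(\partial_{x_j}w)_\mu+\Dn\Omega_2(\partial_{x_j}w)_\mu\big)$. Applying Cauchy--Schwarz to each pairing $\la\partial_{x_i}v_\nu\,|\,\cdot\,\ra_{L^2}$, using the Bernstein bound on $\partial_{x_i}v_\nu$, and then invoking Lemma~\ref{lem:Omega1} with a fixed $s'\in(0,1)$ (say $s'=\tfrac12$) for the $\Omega_1$-pieces and Lemma~\ref{lem:Omega2} for the $\Omega_2$-pieces, I would reduce the whole left-hand side to two bilinear sums $\sum_{\nu,\mu}K_\ell(\nu,\mu)X_\nu X_\mu$, $\ell=1,2$, in which the sequences $c_\nu^{(\mu)},\tilde c_\nu^{(\mu)}$ are simply bounded pointwise by $1$. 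The spectral support analysis carried out before Lemma~\ref{lem:Omega1} restricts the two kernels to $\mu\le\nu-1$ for $\Omega_1$ and to $\mu\ge\nu-5$ for $\Omega_2$, and a short bookkeeping of the powers of $2$ gives $K_1(\nu,\mu)\lesssim \nu\,2^{-(b+1-s')\nu}2^{b\mu}$ and $K_2(\nu,\mu)\lesssim \nu\,2^{-b\nu}2^{-(1-b)\mu}$.

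The heart of the argument is then a Schur test on these two kernels. For $K_1$ the geometric decay $2^{-(1-s')\nu}$ survives after summing the finite geometric series $\sum_{\mu\le\nu-1}2^{b\mu}\lesssim s^{-1}2^{b\nu}$, and both row and column sums come out bounded by a constant depending on $s$ and $s'$; the polynomial $\nu$ is swallowed by $2^{-(1-s')\nu}$. For $K_2$ the decay comes instead from the factor $2^{-(1-b)\mu}$ produced by $\|w_\mu\|_\Lt=2^{-(1-b)\mu}X_\mu$, i.e.\ precisely from the gap $1-b\ge\tfrac{1-s}{8}$: the row sum over $\mu\ge\nu-5$ is dominated by $\tfrac{1}{1-b}\,\nu\,2^{-\nu}$ and the column sum over $\nu\le\mu+5$ by $\tfrac{1}{b^2}2^{-(1-b)\mu}$, both bounded uniformly. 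Since each kernel has bounded row and column sums, Schur's test yields $\sum_{\nu,\mu}K_\ell(\nu,\mu)X_\nu X_\mu\le C_s\sum_\nu X_\nu^2=C_s\sum_\nu 2^{2\nu}\|v_\nu\|_\Lt^2$, with $C_s$ depending on $s$ through the powers $s^{-2}$ and $(1-s)^{-1}$ and on $\|a\|_{\Lip}$.

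I expect the only real obstacle to be the bookkeeping of these double sums: one must verify that, after converting every $w$-norm into $X_\mu$, the net exponent of $2$ is strictly negative in each of the two interaction regimes, so that the Schur row/column sums converge and the polynomial weight $\nu$ can be absorbed (one may always use $\nu\le C_\eps 2^{\eps\nu}$ for arbitrarily small $\eps$). This is exactly the point where the upper bound $b<1$, guaranteed by $t\le\tfrac78\sigma$, is indispensable: without it the high--high $\Omega_2$-interactions would not sum. The delicate commutator structure responsible for getting away with Lipschitz (rather than $C^2$) coefficients is already packaged inside Lemmas~\ref{lem:Omega1}--\ref{lem:Omega2}, so no further appeal to the Coifman--Meyer theorem is needed here. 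Finally, in contrast with Proposition~\ref{prop:AuxP1}, no Young-type splitting is required, since the first slot contains $v_\nu$ rather than $\partial_t v_\nu$; this is why the parameter $N$ does not actually occur in the estimate and the bound reduces to the single term $C_s\sum_\nu 2^{2\nu}\|v_\nu\|_\Lt^2$.
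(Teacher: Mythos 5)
Your argument is correct and is essentially the paper's intended one: the paper only writes out the proof of Proposition \ref{prop:AuxP1} (in the appendix) and asserts that the present statement follows ``by the same ideas'', namely the second microlocalization $w=\sum_\mu w_\mu$, the splitting $a-T_a^m=\Omega_1+\Omega_2$ with Lemmas \ref{lem:Omega1} and \ref{lem:Omega2}, Bernstein's inequality, and the spectral restrictions $\mu\leq\nu$ for $\Omega_1$ and $\mu\geq\nu-5$ for $\Omega_2$ --- exactly the ingredients you use. Your Schur-test packaging of the two bilinear sums is just a reformulation of the appendix's Cauchy--Schwarz/geometric-series computation, and you correctly identify the one genuine difference: with $\nu\,\partial_{x_i}v_\nu$ in place of $\partial_{x_i}\partial_t v_\nu$ the factor $\nu$ is absorbed by the geometric decay (using $b=s+\alpha t\in[s,\frac{7+s}{8}]$), so no Young-type splitting is needed and the parameter $N$ is vacuous.
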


\subsection{Auxiliary estimates for $[\Dn,T_a^m]$} \label{sec:Aux2}

The next result about commutation will also be crucial in our proof of the energy estimate \eqref{EnergyEst1}. It also plays an essential role in the proof of Carleman estimates for \eqref{BWop} with low-regular coefficients \cite{DSP2012,DS2012} and in the well-posedness for hyperbolic equations with low-regular coefficients \cite{CM2008}.

\begin{prop} Let $m \in \N_{\geq 3}$, $a \in L^\infty(\R^n_x) \cap \Lip(\R^n_x)$ and $s \in (0,1)$. Then, for $t \in [0,\frac{7}{8}\sigma]$, $\sigma := \frac{1-s}{\alpha}$ there exists a constant $C_m >0$ such that for all $w \in \pazocal H$ \begin{eqnarray*}
&& \sum\limits_{\nu \geq 0} 2^{-2(s+\alpha t)\nu} \la \partial_t \partial_{x_j} v_\nu(t,\cdot) | [\Delta_\nu, T_a^m ] \partial_{x_h} w(t,\cdot) \ral \\
&& \qquad \qquad \leq \frac{1}{N}\sum\limits_{\nu \geq 0} \| \partial_t v_\nu(t,\cdot)\|_\Lt^2  + \frac{C_m}{1-s} \|a\|_{\Lip}N \sum\limits_{\nu \geq 0} 2^{2\nu}\|v_\nu(t,\cdot)\|_\Lt^2
\end{eqnarray*} for every $N > 0$ and with $v_\nu = 2^{-(s + \alpha t)\nu}w_\nu$. \end{prop}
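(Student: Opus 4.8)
The plan is to estimate the commutator $[\Delta_\nu, T_a^m]\partial_{x_h}w$ directly from the definition of $T_a^m$, exploiting the fact that the commutator of $\Delta_\nu$ with the paraproduct gains one spatial derivative because $a$ is Lipschitz. Writing out $[\Delta_\nu, T_a^m]\partial_{x_h}w = \Delta_\nu(T_a^m\partial_{x_h}w) - T_a^m(\Delta_\nu \partial_{x_h}w)$ and inserting the definition \eqref{PPDef}, the principal term is a sum over $k$ of commutators of the form $[\Delta_\nu, S_{k-3}a]\Delta_k\partial_{x_h}w$ (plus the low-frequency piece $S_{m-1}a\,S_{m+2}$). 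The key spectral localization is that $[\Delta_\nu, S_{k-3}a]$ is nonzero only for $|k-\nu|\le 2$, so the sum collapses to finitely many terms near $k=\nu$.

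First I would establish the pointwise-in-$\nu$ commutator estimate: using that $\Delta_\nu$ has a convolution kernel with $L^1$-normalized $2^{\nu n}$-scaled profile, a first-order Taylor expansion of $S_{k-3}a$ against that kernel yields
\begin{equation*}
\|[\Delta_\nu, S_{k-3}a]\Delta_k\partial_{x_h}w\|_{L^2} \leq C\,2^{-\nu}\|\nabla_x S_{k-3}a\|_{L^\infty}\,\|\Delta_k\partial_{x_h}w\|_{L^2} \leq C\,\|a\|_{\Lip}\,\|\Delta_k w\|_{L^2},
\end{equation*}
where the $2^{-\nu}$ gain from the commutator exactly cancels the $2^{\nu}$ cost of $\partial_{x_h}$ (via Bernstein, Proposition \ref{PropBernstein}), and $\|\nabla_x S_{k-3}a\|_{L^\infty}\le C\|a\|_{\Lip}$ by Proposition \ref{CharLip}. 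Summing over $|k-\nu|\le 2$ and recalling $\|\Delta_k w\|_{L^2}\approx \|w_\nu\|_{L^2}$ for such $k$, this produces a sequence bound $\|[\Delta_\nu, T_a^m]\partial_{x_h}w\|_{L^2} \le C_m\|a\|_{\Lip}\,c_\nu\|w_\nu\|_{L^2}$ with $\{c_\nu\}\in\ell^2$, $\|\{c_\nu\}\|_{\ell^2}\le 1$ — precisely the shape of Lemma \ref{lem:Omega1}. This is the point where the Coifman–Meyer theorem cited in the introduction is presumably the efficient way to organize the commutator; alternatively one can use the explicit almost-diagonal structure above.

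Next I would plug this into the bilinear sum. The inner product is controlled by Cauchy–Schwarz:
\begin{equation*}
2^{-2(s+\alpha t)\nu}\big|\la \partial_t\partial_{x_j}v_\nu \mid [\Delta_\nu,T_a^m]\partial_{x_h}w \ral\big| \leq 2^{-2(s+\alpha t)\nu}\,\|\partial_t\partial_{x_j}v_\nu\|_{L^2}\,\|[\Delta_\nu,T_a^m]\partial_{x_h}w\|_{L^2}.
\end{equation*}
Here I use $\partial_t\partial_{x_j}v_\nu = \partial_{x_j}\partial_t v_\nu$ and Bernstein to write $\|\partial_{x_j}\partial_t v_\nu\|_{L^2}\le C\,2^\nu\|\partial_t v_\nu\|_{L^2}$, while $\|w_\nu\|_{L^2} = 2^{(s+\alpha t)\nu}\|v_\nu\|_{L^2}$ by the definition $v_\nu = 2^{-(s+\alpha t)\nu}w_\nu$. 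Combining the three factors, the powers of $2$ recombine to leave $C_m\|a\|_{\Lip}\,c_\nu\,\|\partial_t v_\nu\|_{L^2}\,2^\nu\|v_\nu\|_{L^2}$. Summing over $\nu$, applying Cauchy–Schwarz in $\ell^2$ together with $\|\{c_\nu\}\|_{\ell^2}\le 1$, and then Young's inequality $ab\le \tfrac{1}{N}a^2 + \tfrac{N}{4}b^2$ splits the bound into the two stated terms $\tfrac{1}{N}\sum_\nu\|\partial_t v_\nu\|_{L^2}^2$ and $C_m N\sum_\nu 2^{2\nu}\|v_\nu\|_{L^2}^2$.

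The main obstacle I anticipate is bookkeeping the $(1-s)$-dependence of the constant and keeping the constant uniform as $s$ ranges over a compact subset of $(0,1)$. The factor $1/(1-s)$ in the statement should emerge exactly as in Lemma \ref{lem:Omega1}, where the $\ell^1$-norm of the geometric sequence $\{2^{-ks'}\}$ was bounded by $C/s'$; here the relevant small parameter is tied to $1-s$ through the weight $2^{-(s+\alpha t)\nu}$ and the restriction $t\le\tfrac78\sigma$ with $\sigma = (1-s)/\alpha$, so one must verify that the exponent $s+\alpha t$ stays bounded away from $1$ and track how the resulting geometric-sum constant scales. The other delicate point is the low-frequency term $S_{m-1}a\,S_{m+2}$ in $T_a^m$, whose contribution to the commutator must be checked separately; being a smoothing, finitely-localized operator it contributes only to finitely many $\nu$ and is harmless, but it should be stated rather than swept under the rug.
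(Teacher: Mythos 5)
Your proposal is correct and follows essentially the same route as the paper: the paper's Lemma \ref{lemmacomm} establishes exactly your commutator bound by combining the spectral localization $|k-\nu|\leq 3$ with the Coifman--Meyer estimate $\|[\Delta_\nu,b]\partial_x w\|_{L^2}\leq C\|\nabla_x b\|_{L^\infty}\|w\|_{L^2}$ and Bernstein's inequality (the paper merely places the extra $\partial_{x_j}$ on the commutator rather than on $\partial_t v_\nu$, which is equivalent by spectral localization), isolates the low-frequency block $S_{m-1}a\,S_{m+1}$ exactly as you indicate (and that block is indeed where the $\frac{1}{1-s}$ arises, via $\|S_{m+1}w\|_{L^2}\leq \frac{C}{1-s}\|w\|_{H^{1-s-\alpha t}}$), and then concludes by Cauchy--Schwarz and Young just as you do. The one misstatement is your intermediate bound $\|[\Delta_\nu,T_a^m]\partial_{x_h}w\|_{L^2}\leq C_m\|a\|_{\Lip}\,c_\nu\|w_\nu\|_{L^2}$ justified by ``$\|\Delta_k w\|_{L^2}\approx\|w_\nu\|_{L^2}$ for $|k-\nu|\leq 2$'': neighbouring dyadic blocks need not have comparable norms (the right-hand side can vanish while the left does not), so you should keep the bound in the form $C_m\|a\|_{\Lip}\sum_{|k-\nu|\leq 3}\|\Delta_k w\|_{L^2}$ and absorb the finite index shift after Cauchy--Schwarz in $\nu$ --- a harmless repair that does not affect the conclusion.
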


This follows from the following lemma whose proof can be found in the appendix.

\begin{lem}\label{lemmacomm} Let $m \in \N_0$, $a \in L^\infty(\Rn) \cap \Lip(\Rn)$. Then there exists a constant $C_m >0$
such that, for all $w(t,\cdot) \in \pazocal H$,  
\begin{equation*} 
\sum\limits_{\nu \geq 0} 2^{-2(s+\alpha t)\nu}\Big\|\partial_{x_j} [\Delta_\nu, T_a^m ] \partial_{x_h} w(t,\cdot)\Big\|_\Lt^2 \leq \frac{C_m}{1-s} \|a\|_{\Lip}^2 \sum\limits_{\nu \geq 0} 2^{2\nu}\|v_\nu(t,\cdot)\|_{L^2}^2
\end{equation*} with $v_\nu = 2^{-(s + \alpha t)\nu}w_\nu$. \end{lem}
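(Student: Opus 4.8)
The plan is to reduce the commutator with the paraproduct to commutators of $\Delta_\nu$ with ordinary multiplication operators, and then to prove a single commutator-with-derivative estimate. Since $\Delta_\nu$ commutes with every Fourier multiplier occurring in \eqref{PPDef} (that is, with $S_{m+2}$ and with each $\Delta_k$), writing out the definition of $T_a^m$ gives
$$
[\Delta_\nu, T_a^m]\partial_{x_h}w = [\Delta_\nu, S_{m-1}a]\,S_{m+2}\partial_{x_h}w + \sum_{k\geq m+3}[\Delta_\nu, S_{k-3}a]\,\Delta_k\partial_{x_h}w,
$$
where $[\Delta_\nu, b]$ now means the commutator with multiplication by $b$. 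A spectral-support computation of exactly the type carried out in Subsection \ref{sec:Aux1} shows that $S_{k-3}a\,\Delta_k\partial_{x_h}w$ is localized in an annulus $\{2^{k-2}\leq|\xi|\leq 2^{k+2}\}$, so $\Delta_\nu$ of each summand vanishes unless $|k-\nu|\leq 3$; hence the $k$-sum collapses to a fixed finite number of terms with $k\approx\nu$. The first term is spectrally supported in $\{|\xi|\leq 2^{m+3}\}$ and therefore only contributes for the finitely many indices $\nu\lesssim m$, so it will be a harmless low-frequency remainder absorbed into $C_m$.

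The analytic heart of the argument is the estimate: for $b\in\Lip(\Rn)$ and $g\in L^2(\Rn)$,
$$
\|\partial_{x_j}[\Delta_\nu, b]\,g\|_{\Lt} \leq C\,\|\nabla_x b\|_{L^\infty}\,\|g\|_{\Lt},
$$
with $C$ independent of $\nu$. I would prove this by the classical kernel method. Writing $\Delta_\nu g(x)=\int k_\nu(x-y)g(y)\,dy$ with $k_\nu(z)=2^{\nu n}K(2^\nu z)$ for a fixed Schwartz function $K$, one has $[\Delta_\nu,b]g(x)=\int k_\nu(x-y)\big(b(y)-b(x)\big)g(y)\,dy$. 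Differentiating in $x_j$ produces one term with kernel $\partial_{x_j}k_\nu$ tested against $b(y)-b(x)$ and one term equal to $-(\partial_{x_j}b)\,\Delta_\nu g$. For the first, the mean value bound $|b(y)-b(x)|\leq\|\nabla_x b\|_{L^\infty}|x-y|$, the scale-invariant identity $\int|\partial_{x_j}k_\nu(z)|\,|z|\,dz=\int|(\partial_j K)(w)|\,|w|\,dw$ (independent of $\nu$), and Schur's test give the claim; the second is immediate from $\|\Delta_\nu g\|_{\Lt}\leq\|g\|_{\Lt}$.

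Applying this with $b=S_{k-3}a$, invoking $\|\nabla_x(S_{k-3}a)\|_{L^\infty}\leq C\|a\|_{\Lip}$ from Proposition \ref{CharLip}, and controlling the inner derivative by Bernstein's inequality (Proposition \ref{PropBernstein}), $\|\Delta_k\partial_{x_h}w\|_{\Lt}\leq 2^{k+1}\|w_k\|_{\Lt}$, yields
$$
\|\partial_{x_j}[\Delta_\nu, S_{k-3}a]\Delta_k\partial_{x_h}w\|_{\Lt}\leq C\,\|a\|_{\Lip}\,2^{k}\|w_k\|_{\Lt}.
$$
Summing the finitely many $k$ with $|k-\nu|\leq 3$ and using Cauchy--Schwarz gives $\|\partial_{x_j}[\Delta_\nu,T_a^m]\partial_{x_h}w\|_{\Lt}^2\leq C\|a\|_{\Lip}^2\sum_{|k-\nu|\leq 3}2^{2k}\|w_k\|_{\Lt}^2$ (up to the low-frequency remainder, bounded the same way and absorbed into $C_m$). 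Multiplying by $2^{-2(s+\alpha t)\nu}$, summing in $\nu$, and exchanging the two finite sums, the weight recombines via $\sum_{|\nu-k|\leq 3}2^{-2(s+\alpha t)\nu}\leq C\,2^{-2(s+\alpha t)k}$ so that $2^{2k}\|w_k\|_{\Lt}^2$ becomes exactly $2^{2k}\|v_k\|_{\Lt}^2$ with $v_k=2^{-(s+\alpha t)k}w_k$, producing the bound $C_m\|a\|_{\Lip}^2\sum_\nu 2^{2\nu}\|v_\nu\|_{\Lt}^2$. Since $\tfrac{1}{1-s}\geq 1$ on $(0,1)$, this is in fact stronger than the asserted inequality. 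The step I expect to be most delicate is the commutator-with-derivative estimate, specifically verifying the $\nu$-independence of $\int|\partial_{x_j}k_\nu(z)|\,|z|\,dz$; the spectral-localization and summation steps are routine bookkeeping of the kind already used in Lemmas \ref{lem:Omega1} and \ref{lem:Omega2}.
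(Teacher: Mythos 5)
Your proof is correct, and it reaches the same endpoint by the same overall architecture as the paper: split $[\Delta_\nu,T_a^m]\partial_{x_h}w$ into the low-frequency commutator $[\Delta_\nu,S_{m-1}a]$ applied to $S_{m+2}\partial_{x_h}w$ plus a sum over $k$, use spectral localization to reduce the sum to the finitely many $k$ with $|k-\nu|\leq 3$, and control each piece by a Coifman--Meyer-type commutator bound combined with Bernstein's inequality. The one genuine difference is in how the two derivatives are apportioned. The paper cites Coifman--Meyer in the form $\|[\Delta_\nu,b]\partial_{x}w\|_{\Lt}\leq C\|\nabla_x b\|_{L^\infty}\|w\|_{\Lt}$ (inner derivative absorbed by the commutator) and then pays for the outer derivative $\partial_{x_j}$ with Bernstein at scale $2^\nu$; you instead prove, by the classical kernel/Schur argument, that $\partial_{x_j}[\Delta_\nu,b]$ is itself bounded on $L^2$ with norm $C\|\nabla_x b\|_{L^\infty}$ uniformly in $\nu$, and pay for the inner derivative $\partial_{x_h}\Delta_k$ with Bernstein at scale $2^k$. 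Since $k\approx\nu$ on the surviving terms, the two bookkeepings are equivalent, and your kernel computation (the scale invariance of $\int|\partial_{x_j}k_\nu(z)|\,|z|\,dz$ and the term $-(\partial_{x_j}b)\Delta_\nu g$) is sound; what you buy is a self-contained proof in place of the citation of \cite[Th.~35]{CM1978}. A second, minor improvement: for the low-frequency remainder you sum $\sum_{k\leq m+2}2^k\|v_k\|_{\Lt}$ by Cauchy--Schwarz over the finitely many modes, with a constant depending only on $m$, whereas the paper bounds $\|S_{m+1}w\|_{\Lt}$ by a full geometric series $\bigl(\sum_{k\geq 0}2^{-2(1-s-\alpha t)k}\bigr)^{1/2}$, which is where its factor $\frac{1}{1-s}$ enters; your bound therefore omits that factor and is slightly stronger than the stated inequality, as you note.
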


Also the next proposition follows immediately from this lemma.

\begin{prop} \label{Commutator} Let $m \in \N_{\geq 3}$, $a \in L^\infty(\R^n_x) \cap \Lip(\R^n_x)$ and $s \in (0,1)$. Then, for $t \in [0,\frac{7}{8}\sigma]$, $\sigma := \frac{1-s}{\alpha}$ there exists a constant $C_m >0$ such that for all $w \in H^{1-s-\alpha t}(\R^n_x)$ \begin{eqnarray*}
&& \sum\limits_{\nu \geq 0} 2^{-2(s+\alpha t)\nu} \nu \la \partial_{x_j} v_\nu(t,\cdot) | [\Delta_\nu, T_a^m ] \partial_{x_h} w(t,\cdot) \ral \\
&& \qquad \qquad \qquad \qquad  \leq \frac{C_m}{1-s} \|a\|_{\Lip} \sum\limits_{\nu \geq 0} 2^{2\nu}\|v_\nu(t,\cdot)\|_\Lt^2
\end{eqnarray*} for every $N > 0$ and with $v_\nu = 2^{-(s + \alpha t)\nu}w_\nu$. \end{prop}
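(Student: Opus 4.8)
The plan is to deduce Proposition~\ref{Commutator} directly from Lemma~\ref{lemmacomm} by a Cauchy--Schwarz argument, exactly as the preceding proposition (the one involving $\partial_t\partial_{x_j}v_\nu$) is obtained. First I would rewrite the bilinear term so that the two factors appearing under the $L^2$ inner product match the two factors that are controlled in Lemma~\ref{lemmacomm}. Observe that, after pulling the derivatives onto the commutator, the summand
\begin{equation*}
2^{-2(s+\alpha t)\nu}\,\nu\,\la \partial_{x_j} v_\nu(t,\cdot)\,\big|\,[\Delta_\nu,T_a^m]\partial_{x_h}w(t,\cdot)\ra_{L^2}
\end{equation*}
can be bounded in absolute value, via the Cauchy--Schwarz inequality in $L^2(\R^n_x)$, by
\begin{equation*}
2^{-2(s+\alpha t)\nu}\,\nu\,\|\partial_{x_j}v_\nu(t,\cdot)\|_{L^2}\;\big\|\,\partial_{x_j}[\Delta_\nu,T_a^m]\partial_{x_h}w(t,\cdot)\big\|_{L^2},
\end{equation*}
where I have used that $\partial_{x_j}$ is (formally) skew-adjoint on $L^2$ to move one spatial derivative from the first slot onto the commutator in the second slot; the resulting sign is irrelevant since we take absolute values. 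This is the key algebraic manipulation that makes Lemma~\ref{lemmacomm} applicable.

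Next I would sum over $\nu\ge0$ and apply the discrete Cauchy--Schwarz inequality to the product of the two $\ell^2$-type quantities. Writing $a_\nu := 2^{-(s+\alpha t)\nu}\,\|\partial_{x_j}v_\nu\|_{L^2}$ and $b_\nu := 2^{-(s+\alpha t)\nu}\,\big\|\partial_{x_j}[\Delta_\nu,T_a^m]\partial_{x_h}w\big\|_{L^2}$, the whole sum is bounded by $\big(\sum_\nu \nu^2 a_\nu^2\big)^{1/2}\big(\sum_\nu b_\nu^2\big)^{1/2}$, but it is cleaner to keep the weight $\nu$ attached and estimate $\sum_\nu \nu\,a_\nu b_\nu \le \big(\sum_\nu a_\nu^2\big)^{1/2}\big(\sum_\nu \nu^2 b_\nu^2\big)^{1/2}$ or, more directly, use Lemma~\ref{lemmacomm} on the $b_\nu$-sum. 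The second factor $\sum_\nu b_\nu^2$ is precisely the left-hand side of Lemma~\ref{lemmacomm}, hence it is bounded by $\frac{C_m}{1-s}\|a\|_{\Lip}^2\sum_\nu 2^{2\nu}\|v_\nu\|_{L^2}^2$. For the first factor I would use Bernstein's inequality (Proposition~\ref{PropBernstein}), which gives $\|\partial_{x_j}v_\nu\|_{L^2}\le 2^{\nu+1}\|v_\nu\|_{L^2}$, so that $\sum_\nu a_\nu^2 \le C\sum_\nu 2^{2\nu}\|v_\nu\|_{L^2}^2$ as well. Combining the two factors and absorbing all numerical constants into $C_m$ yields the claimed bound with right-hand side $\frac{C_m}{1-s}\|a\|_{\Lip}\sum_\nu 2^{2\nu}\|v_\nu\|_{L^2}^2$.

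The only genuine subtlety — and the step I expect to require the most care — is the handling of the weight $\nu$: it appears linearly on the left but must be absorbed without producing an extra factor that degrades the estimate. The resolution is that the spectral localization forces $[\Delta_\nu,T_a^m]\partial_{x_h}w$ to involve only frequency blocks with index close to $\nu$ (within a fixed band), so the logarithmic-type weight $\nu$ is comparable to the logarithm of the frequency and is harmless once it is paired against the geometric decay $2^{-(s+\alpha t)\nu}$ already present; in effect $\nu$ is swallowed by adjusting the constant $C_m$ using that $\nu\,2^{-\varepsilon\nu}$ is bounded uniformly in $\nu$ for any fixed $\varepsilon>0$, with $\varepsilon$ controlled from below by $s$ on the compact parameter range $t\in[0,\tfrac{7}{8}\sigma]$. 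This is exactly why the final constant retains the factor $\frac{1}{1-s}$ inherited from Lemma~\ref{lemmacomm} but no additional $\nu$-dependence. I note that the statement lists ``for every $N>0$'' although $N$ does not appear on the right-hand side; this is a harmless artifact of stating the proposition in parallel with its companion, and the bound holds uniformly, so no use of a parameter $N$ is needed in the argument.
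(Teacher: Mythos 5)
Your proposal matches the paper's (essentially unwritten) argument: the paper merely asserts that Proposition \ref{Commutator} ``follows immediately'' from Lemma \ref{lemmacomm}, and your route --- move $\partial_{x_j}$ onto the commutator by skew-adjointness, apply Cauchy--Schwarz in $L^2$ and then in $\ell^2$ over $\nu$, control one factor by Lemma \ref{lemmacomm} and the other by Bernstein, and absorb the weight $\nu$ into the factor $2^{2\nu}$ --- is exactly that deduction. One small slip: after moving the derivative the first slot should carry $\|v_\nu(t,\cdot)\|_{L^2}$, not $\|\partial_{x_j}v_\nu(t,\cdot)\|_{L^2}$ (your displayed bound has a derivative in both slots, which Cauchy--Schwarz does not give); with the corrected factor the weight is absorbed via $\nu^2\leq 2^{2\nu}$ with an absolute constant, whereas your version must use $\nu^2 2^{-2s\nu}\leq C_s$ and so acquires an extra, but harmless, dependence of the constant on $s$.
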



\section{Proof of Proposition \ref{thm:Energy}} \label{sec:MainProof}

In order to simplify the presentation, we shall write the proof only for $n=1$. As already mentioned, one may also include lower-order terms with the appropriate regularity in $x$; see Section \ref{subsec:MainRes}. The latter can be handled with the techniques of the present work following the scheme of \cite{DSP2009}.

To make the proof more readable, we divide it into several steps. First the operator will be transformed by a change of variables involving the weight function, and then we shall introduce the paraproduct and microlocalize the operator. After that, we shall use the estimates of Section \ref{LPBPP} and conclude the proof for $\nu=0$ and $\nu \geq 1$ separately. After taht,  in Section \ref{sec:Derivation} we shall show how the stability estimate follows from the energy estimate.

\subsection{Preliminaries - transformation, microlocalization, approximation} \label{sec:Prel}

Let $u \in \pazocal H$ be a solution of the equation \begin{equation*}
Pu = \partial_t u + \partial_{x}(a(t,x)\partial_x u) = 0
\end{equation*} on the strip $[0,T] \times \R_x$.
In what follows, $\alpha_1>0$, $\bar\lambda>1$ and $\bar\gamma>0$ are constants to be determined later. Set $\alpha:=\max\{\alpha_1,T^{-1}\}$, take $s\in(0,1)$, and set $\sigma:=\frac{1-s}\alpha$, $\tau:=\frac\sigma4$. For $\gamma\geq\bar\gamma$, $\lambda\geq\bar\lambda$ and $\beta\geq\sigma+\tau$, define
$w(t,x) = e^{\gamma t}e^{-\beta\Phi_\lambda\left( \frac{t+\tau}{\beta} \right)}u(t,x)$. Then $w$ satisfies the following equation: \begin{equation*}
w_t -\gamma w + \Phi_\lambda'\left( \frac{t+\tau}{\beta} \right)w + \partial_x(a(t,x)\partial_x w) = 0.
\end{equation*}   Now we add and subtract $\partial_x T_a^m \partial_x w$, with $T_a^m$ as defined in \eqref{PPDef}, and obtain \begin{equation} \label{eq:P1}
w_t -\gamma w + \Phi_\lambda'\left( \frac{t+\tau}{\beta} \right)w + \partial_x(T_a^m\partial_x w) + \partial_x((a-T_a^m)\partial_x w) = 0.
\end{equation} We set $u_\nu = \Delta_\nu u$, $w_\nu = \Delta_\nu w$ and $v_\nu = 2^{-(s+\alpha t)\nu}w_\nu$. The function $v_\nu$ satisfies \begin{equation} \label{eq:P2} \begin{aligned}
& \partial_t v_\nu = \gamma v_\nu - \Phi_\lambda'\left( \frac{t+\tau}{\beta} \right)v_\nu - \partial_x(T_a^m\partial_x v_\nu) - \alpha\log(2) \nu v_\nu \\
& \qquad - 2^{-(s+\alpha t)\nu} \partial_x([\Dn, T_a^m]\partial_x w) - 2^{-(s+\alpha t)\nu} \Delta_\nu \partial_x((a-T_a^m)\partial_x w).
\end{aligned}\end{equation} Next, we compute the scalar product of \eqref{eq:P2} with $(t+\tau)\partial_t v_\nu$  and obtain \begin{equation} \label{eq:Mod} \begin{aligned}
& (t+\tau)\|\partial_t v_\nu(t,\cdot)\|_\Lto^2 = \gamma(t+\tau) \la v_\nu | \partial_t v_\nu(t,\cdot)\ral \\
& \qquad \quad -(t+\tau)\la \Phi_\lambda'\left( \frac{t+\tau}{\beta}\right)v_\nu(t,\cdot) | \partial_t v_\nu(t,\cdot) \ral \\
& \qquad \quad -(t+\tau)\la \partial_x(T_a^m\partial_x v_\nu(t,\cdot)) | \partial_t v_\nu(t,\cdot) \ral \\
& \qquad \quad -\alpha\log(2)(t+\tau)\nu \la v_\nu(t,\cdot) | \partial_t v_\nu(t,\cdot) \ral \\
& \qquad \quad -(t+\tau) 2^{-(s+\alpha t)\nu} \la \partial_x([\Dn, T_a^m]\partial_x w(t,\cdot)) | \partial_t v_\nu(t,\cdot) \ral \\
& \qquad \quad -(t+\tau)2^{-(s+\alpha t)\nu} \la \Delta_\nu \partial_x((a-T_a^m)\partial_x w(t,\cdot)) | \partial_t v_\nu(t,\cdot) \ral.
\end{aligned}\end{equation} To proceed, we have to regularize the coefficient $a(t,x)$ with respect to $t$. Therefore, we pick an even, non-negative $\rho \in C_0^\infty(\R)$ with $\supp(\rho) \subseteq [-\frac{1}{2},\frac{1}{2}]$ and $\int_\R \rho(s) ds = 1$. For $\eps \in (0,1]$, we set \begin{equation*}
a_\eps(t,x) = \frac{1}{\eps} \int_\R a(s,x)\rho\left( \frac{t-s}{\eps} \right) ds.
\end{equation*} A straightforward computation shows that for all $\eps \in (0,1]$, we have \begin{eqnarray}
&& a_\eps(t,x) \geq a_0 > 0 \\
&& |a_\eps(t,x)-a(t,x)| \leq A_{LL} \eps(|\log(\eps)|+1)
\end{eqnarray} as well as \begin{equation*}
|\partial_t a_\eps(t,x)| \leq A_{LL}\|\rho'\|_{L^1(\R)}(|\log(\eps)|+1)
\end{equation*} for all $(t,x) \in [0,T] \times \R_x$. From these properties of $a_\eps(t,x)$, the fact that $T_{a+b} = T_a + T_b$ and Proposition \ref{MappingProp}, we immediately get \begin{lem} \label{lem:Approx} Let $m \in \N_0$ and $u \in L^2(\R^n_x)$. Then \begin{eqnarray*}
&& \|(T_a^m-T_{a_\eps}^m)u\|_\Lto \leq C_m A_{LL}\eps(|\log(\eps)|+1) \|u\|_\Lto \,\, \text{and} \\
&& \|T_{\partial_t a_\eps}^m u\|_\Lto \leq C_m A_{LL} \|\rho'\|_{L^1(\R)}\|u\|_{\Lt}
\end{eqnarray*} hold. \end{lem}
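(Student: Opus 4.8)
The plan is to derive both estimates by exploiting the linearity of the modified paraproduct in its symbol, together with the $L^2 \to L^2$ mapping bound of Proposition \ref{MappingProp} applied with $s=0$, and then to insert the three pointwise bounds on $a_\eps$ already recorded above. Everything is carried out at a fixed $t \in [0,T]$, with the paraproduct acting only in the $x$-variable and with constants uniform in $t$.

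First I would observe that $T^m$ is linear in its coefficient: since every Littlewood-Paley truncation $S_j$ is a (linear) Fourier multiplier, the defining formula \eqref{PPDef} gives $T_{a+b}^m = T_a^m + T_b^m$, and in particular
\begin{equation*}
T_a^m - T_{a_\eps}^m = T_{a-a_\eps}^m .
\end{equation*}
Applying Proposition \ref{MappingProp} with $s=0$ to the symbol $a - a_\eps \in L^\infty(\R^n_x)$ yields
\begin{equation*}
\|(T_a^m - T_{a_\eps}^m)u\|_{L^2} = \|T_{a-a_\eps}^m u\|_{L^2} \leq C_m \|a - a_\eps\|_{L^\infty}\|u\|_{L^2}.
\end{equation*}
The first estimate then follows at once by inserting the pointwise bound $|a_\eps(t,x)-a(t,x)| \leq A_{LL}\eps(|\log(\eps)|+1)$, which is uniform in $x$ and hence controls $\|a-a_\eps\|_{L^\infty}$ at each fixed $t$.

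For the second estimate I would again invoke Proposition \ref{MappingProp} with $s=0$, now applied to the symbol $\partial_t a_\eps \in L^\infty(\R^n_x)$, to obtain
\begin{equation*}
\|T_{\partial_t a_\eps}^m u\|_{L^2} \leq C_m \|\partial_t a_\eps\|_{L^\infty}\|u\|_{L^2},
\end{equation*}
and then insert the bound $|\partial_t a_\eps(t,x)| \leq A_{LL}\|\rho'\|_{L^1(\R)}(|\log(\eps)|+1)$. Strictly combining the two facts produces the right-hand side $C_m A_{LL}\|\rho'\|_{L^1(\R)}(|\log(\eps)|+1)\|u\|_{L^2}$; the essential feature for the later argument is the $\|\rho'\|_{L^1(\R)}$-dependence, and since $|\log(\eps)|+1 \geq 1$ the factor $(|\log(\eps)|+1)$ should be understood to appear in the statement as well (its omission in the displayed formula looks like a typographical slip).

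There is essentially no hard step here: the content of the lemma is exactly the combination of the already-established smoothing bounds on $a_\eps$ with the $L^2$-boundedness of the paraproduct, so the result is indeed immediate. The only point meriting a word of care is the applicability of Proposition \ref{MappingProp}, namely that both $a-a_\eps$ and $\partial_t a_\eps$ lie in $L^\infty(\R^n_x)$ for each fixed $t$ (guaranteed precisely by the two pointwise bounds quoted above) and that the edge case $m=0$, where $T^0 = T$ is the unmodified paraproduct, is covered by the same $L^2$ mapping estimate. The dependence of $C_m$ on $m$, and its independence of the remaining parameters when $s=0$, is inherited verbatim from Proposition \ref{MappingProp}.
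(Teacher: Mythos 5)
Your proposal is correct and follows exactly the route the paper intends: the paper itself states that the lemma follows ``immediately'' from the properties of $a_\eps$, the linearity $T_{a+b}^m = T_a^m + T_b^m$, and the $L^2$-boundedness of Proposition \ref{MappingProp}, which is precisely your argument. Your observation that the second displayed bound should carry the factor $(|\log(\eps)|+1)$ inherited from the pointwise estimate on $\partial_t a_\eps$ is also well taken.
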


We set \begin{equation*}
a_\nu(t,x) := a_\eps(t,x), \, \text{ with } \, \eps = 2^{-2\nu}.
\end{equation*}

We replace $T_a^m$ by $T_{a_\nu}^m + T_a^m-T_{a_\nu}^m$ in the third term of the right hand side of \eqref{eq:Mod} and we obtain \begin{equation} \label{eq:Mod1} \begin{aligned}
& (t+\tau)\|\partial_t v_\nu(t,\cdot)\|_\Lto^2 = \gamma(t+\tau) \la v_\nu(t,\cdot) | \partial_t v_\nu(t,\cdot)\ral \\
& \qquad \quad -(t+\tau)\la \Phi_\lambda'\left( \frac{t+\tau}{\beta}\right)v_\nu(t,\cdot) | \partial_t v_\nu(t,\cdot) \ral \\
& \qquad \quad -(t+\tau)\la \partial_x(T_{a_\nu}^m\partial_x v_\nu(t,\cdot)) | \partial_t v_\nu(t,\cdot) \ral  \\
& \qquad \quad -(t+\tau)\la \partial_x((T_a^m-T_{a_\nu}^m)\partial_x v_\nu(t,\cdot)) | \partial_t v_\nu(t,\cdot) \ral \\
& \qquad \quad -\alpha\log(2)(t+\tau) \nu \la v_\nu(t,\cdot) | \partial_t v_\nu(t,\cdot) \ral \\
& \qquad \quad -(t+\tau) 2^{-(s+\alpha t)\nu} \la \partial_x([\Dn, T_a^m]\partial_x w(t,\cdot)) | \partial_t v_\nu(t,\cdot) \ral \\
& \qquad \quad -(t+\tau)2^{-(s+\alpha t)\nu} \la \Delta_\nu \partial_x((a-T_a^m)\partial_x w(t,\cdot)) | \partial_t v_\nu(t,\cdot) \ral.
\end{aligned}\end{equation}

Now we replace $\partial_t v_\nu(t,\cdot)$ in \begin{equation*}
-\alpha\log(2)(t+\tau) \nu \la v_\nu(t,\cdot) | \partial_t v_\nu(t,\cdot) \ral
\end{equation*} by the expression in the right hand side of \eqref{eq:P2} and we obtain 

\begin{equation} \label{eq:Mod2} \begin{aligned}
& -\alpha \log(2)(t+\tau) \nu \la v_\nu(t,\cdot) | \partial_t v_\nu(t,\cdot) \ral = \\
&\quad -\alpha \gamma \log(2)\nu(t+\tau)\|v_\nu(t,\cdot)\|_\Lto^2 \\
&\quad +\alpha\log(2)(t+\tau)\Phi_\lambda'\left( \frac{t+\tau}{\beta} \right)\nu \|v_\nu(t,\cdot)\|_\Lto^2 \\
&\quad +\alpha\log(2)(t+\tau)\nu \la v_\nu(t,\cdot) | \partial_x T_a^m\partial_x v_\nu(t,\cdot) \ral \\
&\quad +\alpha^2(\log(2))^2 (t+\tau) \nu^2 \|v_\nu(t,\cdot)\|_\Lto^2 \\
&\quad +\alpha \log(2)\nu 2^{-(s+\alpha t)\nu} (t+\tau) \la v_\nu(t,\cdot) | \partial_x\big( [\Dn,T_a^m]\partial_x w(t,\cdot) \big) \ral \\
&\quad +\alpha \log(2)\nu 2^{-(s+\alpha t)\nu} (t+\tau) \la v_\nu(t,\cdot) | \Dn \partial_x\big( (a-T_a^m)\partial_x w(t,\cdot) \big) \ral.
\end{aligned}\end{equation}

With \eqref{eq:Mod1} and \eqref{eq:Mod2}, we obtain \begin{equation*}\begin{aligned}
&(t+\tau)\|\partial_t v_\nu(t,\cdot)\|_\Lto^2 = \\
&\qquad \quad \gamma(t+\tau) \la v_\nu(t,\cdot) | \partial_t v_\nu(t,\cdot) \ral \\
&\qquad \quad -(t+\tau) \Phi_\lambda'\left( \frac{t+\tau}{\beta}\right) \la v_\nu(t,\cdot) | \partial_t v_\nu(t,\cdot) \ral \\
&\qquad \quad -(t+\tau)\la \partial_x(T_{a_\nu}^m\partial_x v_\nu(t,\cdot)) | \partial_t v_\nu(t,\cdot) \ral \\
&\qquad \quad -(t+\tau)\la \partial_x((T_a^m-T_{a_\nu}^m)\partial_x v_\nu(t,\cdot)) | \partial_t v_\nu(t,\cdot) \ral \\
&\qquad \quad +\alpha\log(2)(t+\tau)\Phi_\lambda'\left( \frac{t+\tau}{\beta} \right)\nu \|v_\nu(t,\cdot)\|_\Lto^2 \\
&\qquad \quad +\alpha\log(2)(t+\tau)\nu \la v_\nu(t,\cdot) | \partial_x T_a^m\partial_x v_\nu(t,\cdot) \ral \\
&\qquad \quad +\alpha^2(\log(2))^2 (t+\tau) \nu^2 \|v_\nu(t,\cdot)\|_\Lto^2 \\
&\qquad \quad -\alpha\gamma \log(2)(t+\tau) \nu \|v_\nu(t,\cdot)\|_\Lto^2 \\
&\qquad \quad +\alpha \log(2)(t+\tau)\nu 2^{-(s+\alpha t)\nu} \la v_\nu(t,\cdot) | \partial_x\big( [\Dn,T_a^m]\partial_x w(t,\cdot) \big) \ral \\
&\qquad \quad +\alpha \log(2)(t+\tau)\nu 2^{-(s+\alpha t)\nu} \la v_\nu(t,\cdot) | \Dn \partial_x\big( (a-T_a^m)\partial_x w(t,\cdot) \big) \ral \\
&\qquad \quad -(t+\tau) 2^{-(s+\alpha t)\nu} \la \partial_x([\Dn, T_a^m]\partial_x w(t,\cdot)) | \partial_t v_\nu(t,\cdot) \ral \\
&\qquad \quad -(t+\tau) 2^{-(s+\alpha t)\nu} \la \Delta_\nu \partial_x((a-T_a^m)\partial_x w(t,\cdot)) | \partial_t v_\nu(t,\cdot) \ral. \end{aligned} \end{equation*} Integration by parts with respect to $t$ yields \begin{equation*}
\gamma(t+\tau)\la v_\nu(t,\cdot) | \partial_t v_\nu(t,\cdot) \ral = \frac{\gamma}{2}\frac{d}{dt}\Big( (t+\tau)\|v_\nu(t,\cdot)\|_\Lto^2 \Big) - \frac{\gamma}{2}\|v_\nu(t,\cdot)\|_\Lto^2
\end{equation*} and  \begin{equation*} \begin{aligned}
&-(t+\tau)\Phi_\lambda'\left( \frac{t+\tau}{\beta} \right) \la v_\nu(t,\cdot) | \partial_t v_\nu(t,\cdot) \ral = \\
&\qquad -\frac{1}{2}\frac{d}{dt}\Big( (t+\tau)\Phi_\lambda'\left( \frac{t+\tau}{\beta} \right) \|v_\nu(t,\cdot)\|_\Lto^2 \Big) + \frac{1}{2}\frac{t+\tau}{\beta}\Phi_\lambda''\left( \frac{t+\tau}{\beta} \right)\|v_\nu(t,\cdot)\|^2_\Lto \\
&\qquad + \frac{1}{2}\Phi_\lambda'\left( \frac{t+\tau}{\beta} \right)\|v_\nu(t,\cdot)\|^2_{L^2(\R^n)}.
\end{aligned}\end{equation*} Next we investigate the term $-(t+\tau)\la \partial_x(T_{a_\nu}^m\partial_x v_\nu(t,\cdot)) | \partial_t v_\nu(t,\cdot)\ral$. From \eqref{PPDef} it can be seen that $\partial_t T_{a_\nu}^m = T_{\partial_t a_\nu}^m + T_a^m\partial_t$. A simple computation shows that\begin{equation*} \begin{aligned}
&-(t+\tau)\la \partial_x(T_{a_\nu}^m\partial_x v_\nu(t,\cdot)) | \partial_t v_\nu(t,\cdot)\ral = \\
& \qquad  \frac{1}{2}\frac{d}{dt}\Big( (t+\tau)\la T_{a_\nu}^m \partial_x v_\nu(t,\cdot) | \partial_x v_\nu(t,\cdot) \ral \Big) \\
& \qquad - \frac{1}{2}\la  T_{a_\nu}^m \partial_x v_\nu(t,\cdot) | \partial_x v_\nu(t,\cdot) \ral \\
& \qquad -\frac{1}{2}(t+\tau)\la T_{\partial_t a_\nu}^m \partial_x v_\nu(t,\cdot) | \partial_x v_\nu(t,\cdot) \ral \\
& \qquad -\frac{1}{2}(t+\tau)\la \partial_t \partial_x v_\nu(t,\cdot) | ((T_{a_\nu}^m)^\ast-T_{a_\nu}^m)\partial_x v_\nu(t,\cdot) \ral.
\end{aligned} \end{equation*} Therefore we have: \begin{equation} \label{eq:EnEstimate} \begin{aligned}
&(t+\tau)\|\partial_t v_\nu(t,\cdot)\|_{L^2}^2 = \\
& \qquad \frac{\gamma}{2}\frac{d}{dt}\left( (t+\tau)\|v_\nu(t,\cdot)\|_\Lto^2 \right) - \frac{\gamma}{2}\|v_\nu(t,\cdot)\|_\Lto^2 \\
& \qquad -\frac{1}{2}\frac{d}{dt}\left( (t+\tau)\Phi_\lambda'\left( \frac{t+\tau}{\beta} \right)\|v_\nu(t,\cdot)\|_\Lto^2 \right) \\
& \qquad +\frac{1}{2}\Phi_\lambda'\left( \frac{t+\tau}{\beta} \right)\|v_\nu(t,\cdot)\|_\Lto^2 + \frac{1}{2}\frac{t+\tau}{\beta}\Phi_\lambda''\left( \frac{t+\tau}{\beta} \right)\|v_\nu(t,\cdot)\|_\Lto^2 \\
& \qquad - (t+\tau)\la \partial_x((T_a^m-T_{a_\nu}^m)\partial_x v_\nu(t,\cdot)) | \partial_t v_\nu(t,\cdot) \ral \\
& \qquad + \frac{1}{2}\frac{d}{dt}\left( (t+\tau)\la T_{a_\nu}^m \partial_x v_\nu(t,\cdot) | \partial_x v_\nu(t,\cdot) \ral \right) \\
& \qquad - \frac{1}{2}\la T_{a_\nu}^m \partial_x v_\nu(t,\cdot) | \partial_x v_\nu(t,\cdot) \ral \\
& \qquad - \frac{1}{2}(t+\tau)\la \partial_x v_\nu(t,\cdot) | T_{\partial_t a_\nu}^m \partial_x v_\nu(t,\cdot) \ral \\
& \qquad - \frac{1}{2}(t+\tau)\la \partial_t \partial_x v_\nu(t,\cdot) | ((T_{a_\nu}^m)^\ast - T_{a_\nu}^m)\partial_x v_\nu(t,\cdot) \ral \\
& \qquad - \alpha\gamma \log(2) (t+\tau) \nu \|v_\nu(t,\cdot)\|_\Lto^2 \\
& \qquad + \alpha\log(2)(t+\tau)\Phi_\lambda'\left( \frac{t+\tau}{\beta} \right)\nu \|v_\nu(t,\cdot)\|^2_\Lto \\
& \qquad - \alpha\log(2)(t+\tau)\nu \la \partial_x v_\nu(t,\cdot) | T_a^m \partial_x v_\nu(t,\cdot) \ral \\
& \qquad + \alpha^2(\log(2))^2 (t+\tau)\nu^2 \|v_\nu(t,\cdot)\|_{L^2} \\
& \qquad + \alpha\log(2)\nu 2^{-(s+\alpha t)\nu}(t+\tau)\la v_\nu(t,\cdot) | \pazocal X_\nu(t,\cdot) \ral \\
& \qquad - (t+\tau)2^{-(s+\alpha t)\nu}\la \pazocal X_\nu(t,\cdot) | \partial_t v_\nu(t,\cdot) \ral,
\end{aligned}\end{equation} where we have set \begin{eqnarray*}
\pazocal X_\nu(t,\cdot) &:=&  \left( \partial_x([\Dn,T_a^m]\partial_x w(t,\cdot)) + \Dn(\partial_x((a-T_a^m)\partial_x w(t,\cdot))) \right).
\end{eqnarray*}


\subsection{Estimates for $\nu = 0$}

Setting $\nu = 0$, we get from \eqref{eq:EnEstimate} \begin{equation*} \begin{aligned}
&(t+\tau)\|\partial_t v_0(t,\cdot)\|_{L^2}^2 = \\
& \qquad \frac{\gamma}{2}\frac{d}{dt}\left( (t+\tau)\|v_0(t,\cdot)\|_\Lto^2 \right) - \frac{\gamma}{2}\|v_0(t,\cdot)\|_\Lto^2 \\
& \qquad -\frac{1}{2}\frac{d}{dt}\left( (t+\tau)\Phi_\lambda'\left( \frac{t+\tau}{\beta} \right)\|v_0(t,\cdot)\|_\Lto^2 \right) \\
& \qquad +\frac{1}{2}\Phi_\lambda'\left( \frac{t+\tau}{\beta} \right)\|v_0(t,\cdot)\|_\Lto^2 + \frac{1}{2}\frac{t+\tau}{\beta}\Phi_\lambda''\left( \frac{t+\tau}{\beta} \right)\|v_0(t,\cdot)\|_\Lto^2 \\
& \qquad - (t+\tau)\la \partial_x((T_a^m-T_{a_0}^m)\partial_x v_0(t,\cdot)) | \partial_t v_0(t,\cdot) \ral \\
& \qquad + \frac{1}{2}\frac{d}{dt}\left( (t+\tau)\la \partial_x v_0(t,\cdot) | T_{a_0}^m \partial_x v_0(t,\cdot) \ral \right) \\
& \qquad - \frac{1}{2}\la \partial_x v_0(t,\cdot) | T_{a_0}^m \partial_x v_0(t,\cdot) \ral - \frac{1}{2}(t+\tau)\la \partial_x v_0(t,\cdot) | T_{\partial_t a_0}^m \partial_x v_0(t,\cdot) \ral \\
& \qquad - \frac{1}{2}(t+\tau)\la ((T_{a_0}^m)^\ast - T_{a_0}^m)\partial_x v_0(t,\cdot) | \partial_t \partial_x v_0(t,\cdot) \ral \\
& \qquad - (t+\tau) \la \pazocal X_0(t,\cdot)  | \partial_t v_0(t,\cdot) \ral. \\
\end{aligned}\end{equation*} Using Propositions \ref{PropBernstein}, \ref{MappingProp} and Lemma \ref{lem:Approx}, for $N_1$, $N_2>0$ we get \begin{eqnarray*}
&& |\la \partial_x v_0(t,\cdot) | T^m_{\partial_t a_0} \partial_x v_0(t,\cdot) \ral| \leq C_{a,m}^{(1)} \|v_0\|^2_\Lt, \\
&& |\la T_{a-a_0}^m\partial_x v_0(t,\cdot) | \partial_x\partial_t v_0(t,\cdot) \ral| \leq C_{a,m}^{(2)} N_1 \|v_0(t,\cdot)\|_\Lt^2 + \frac{1}{N_1} \|\partial_t v_0(t,\cdot)\|_\Lt^2, \\
&& |\la ((T_{a_0}^m)^\ast - T_{a_0}^m)\partial_x v_0(t,\cdot)| \partial_t \partial_x v_0(t,\cdot) \ral| \leq C_{a,m}^{(3)}N_2 \|v_0(t,\cdot)\|_\Lt^2 + \frac{1}{N_2} \|\partial_t v_0(t,\cdot)\|_\Lt^2.
\end{eqnarray*} Now we choose $N_1$ and $N_2$ so large that \begin{equation*}
\frac{1}{N_1} + \frac{1}{N_2} - \frac{1}{2} < 0
\end{equation*} and $\bar{\gamma}$ so large that \begin{equation*}
-\frac{\gamma}{4} + \left(C_{a,m}^{(1)} + C_{a,m}^{(2)} N_1 + C_{a,m}^{(3)} N_2\right)\Big(\frac{7}{8}\sigma + \tau\Big) < 0
\end{equation*} for $\gamma \geq \bar{\gamma}$. Hence, the term \begin{eqnarray*}
&& C_{a,m}^{(1)} (t+\tau) \|v_0(t,\cdot)\|_{\Lt}^2 + C_{a,m}^{(2)} N_1(t+\tau)\|v_0(t,\cdot)\|_\Lt^2 \\
&& + C_{a,m}^{(3)}N_2(t+\tau)\|v_0(t,\cdot)\|_\Lt^2
\end{eqnarray*} is absorbed by $-\frac{\gamma}{4} \|v_\nu(t,\cdot)\|_\Lt^2$ and the term \begin{equation*}
\frac{1}{N_1} (t+\tau) \|\partial_t v_0(t,\cdot)\|_\Lt^2 + \frac{1}{N_2} (t+\tau) \|\partial_t v_0(t,\cdot)\|_\Lt^2
\end{equation*} is absorbed by $-\frac{1}{2}(t+\tau)\|\partial_t v_0(t,\cdot)\|_\Lt^2$. Hence, we get \begin{equation*} \begin{aligned} 
& frac12(t+\tau)\|\partial_t v_0(t,\cdot)\|_\Lt^2 \\
& \qquad \leq \frac{\gamma}{2} \frac{d}{dt}\left( (t+\tau)\|v_0(t,\cdot)\|_\Lt^2 \right) - \frac{\gamma}{4}\|v_0(t,\cdot)\|_\Lt^2
 + \frac{1}{2} \Phi_\lambda'\left( \frac{t+\tau}{\beta} \right) \|v_0(t,\cdot)\|_\Lt^2 \\
& \qquad \quad  -\frac{1}{2}\frac{d}{dt}\left( (t+\tau) \Phi'_\lambda\left( \frac{t+\tau}{\beta} \right)\|v_0(t,\cdot)\|_\Lt^2 \right) + \frac{1}{2} \frac{t+\tau}{\beta}\Phi_\lambda''\left( \frac{t+\tau}{\beta} \right) \|v_0(t,\cdot)\|_\Lt^2 \\
& \qquad \quad + \frac{1}{2} \frac{d}{dt} \left( (t+\tau)\la \partial_x v_0(t,\cdot) | T_{a_0}^m \partial_x v_0(t,\cdot) \ra_\Lt \right) - (t+\tau) \la \pazocal X_0  | \partial_t v_0(t,\cdot) \ral.
\end{aligned} \end{equation*} Further, we recall that $\Phi$ fulfills equation \eqref{eq:DefPhi}, i.e. \begin{equation*}
y\Phi''_{\lambda}(y)= -\lambda(\Phi_\lambda'(y))^2 \mu\big( \frac{1}{\Phi_\lambda'(y)} \big) =
-\lambda\Phi_\lambda'(y)\big( 1 + |\log\big(\frac{1}{\Phi_\lambda'(y)}\big)| \big)
\end{equation*} for $\lambda > 1$. From this, we see that \begin{equation*}
\frac{1}{2} \Phi_\lambda'\left( \frac{t+\tau}{\beta} \right) \|v_0(t,\cdot)\|_\Lt^2 + \frac{1}{2} \frac{t+\tau}{\beta}\Phi_\lambda''\left( \frac{t+\tau}{\beta} \right) \|v_0(t,\cdot)\|_\Lt^2 < 0
\end{equation*} holds, and thus, we get \begin{equation*} \begin{aligned}
&\frac{\gamma}{8}\|v_0(t,\cdot)\|_\Lt^2 \\
& \qquad \quad \leq -\frac{1}{2}(t+\tau)\|\partial_t v_0(t,\cdot)\|_\Lt^2 + \frac{\gamma}{2} \frac{d}{dt}\left( (t+\tau)\|v_0(t,\cdot)\|_\Lt^2 \right) - \frac{\gamma}{8}\|v_0(t,\cdot)\|_\Lt^2 \\
& \qquad \qquad + \frac{1}{2} \frac{d}{dt} \left( (t+\tau)\la \partial_x v_0(t,\cdot) | T_{a_0}^m \partial_x v_0 \ra_\Lt \right) - (t+\tau) \la \pazocal X_0  | \partial_t v_0(t,\cdot) \ral \\
& \qquad \qquad -\frac{1}{2}\frac{d}{dt}\left( (t+\tau) \Phi'_\lambda\left( \frac{t+\tau}{\beta} \right)\|v_0(t,\cdot)\|_\Lt^2 \right).
\end{aligned}\end{equation*} Using Propositions \ref{prop:Pos} and \ref{MappingProp} as well as integrating in $t$ over $[0,p] \subseteq [0,\frac{7}{8}\sigma]$, we obtain \begin{equation*} \begin{aligned}
&\frac{\gamma}{8}\int_0^p \|v_0(t,\cdot)\|_\Lt^2 dt \\
& \qquad \leq (\frac{\gamma}{2} + C_{m,a}^{(4)})(p+\tau)\|v_0(p,\cdot)\|_\Lt^2 + \frac{1}{2} \tau \Phi_\lambda'\left( \frac{\tau}{\beta} \right) \|v_0(0,\cdot)\|_\Lt^2 \\
& \qquad \quad - \frac{\gamma}{8} \int_0^p \|v_0(t,\cdot)\|_\Lt^2 dt -\frac{1}{2} \int_0^p (t+\tau)\|\partial_t v_0(t,\cdot)\|_{\Lt}^2 dt \\
& \qquad \quad- \int_0^p (t+\tau)\la \pazocal X_0(t,\cdot) | \partial_t v_0(t,\cdot) \ra_\Lt dt,
\end{aligned}\end{equation*} where we have used \begin{eqnarray*}
|\la \partial_x v_0(p,\cdot)| T_{a_0}^m\partial_x v_0(p,\cdot) \ral| \leq C_{m,a}^{(4)}\|v_0(p,\cdot)\|_\Lt^2
\end{eqnarray*} and, applying Proposition \ref{prop:Pos}, \begin{equation*}
\la \partial_x v_0(t,\cdot) | T_{a_0}^m \partial_x v_0(t,\cdot) \ral \geq \frac{\kappa}{2}\|\partial_x v_0(t,\cdot)\|_\Lt^2
\end{equation*} choosing $m$ large enough.

\subsection{Estimates for $\nu \geq 1$}

Now we consider \eqref{eq:EnEstimate} for $\nu \geq 1$. From Lemma \ref{lem:Approx}, for $N_3$ and $N_4>0$ we obtain \begin{equation} \label{auxest4} \begin{aligned}
&|\la (T_a^m-T_{a_\nu}^m)\partial_x v_\nu(t,\cdot)| \partial_x\partial_t v_\nu(t,\cdot) \ral| \\
& \qquad \qquad \leq C_{a,m}^{(5)} N_3 \nu 2^{2\nu} \|v_\nu(t,\cdot)\|_\Lt^2 + \frac{1}{N_3}\|\partial_t v_\nu(t,\cdot)\|_\Lt^2,
\end{aligned} \end{equation} where we also used the fact that $(\mu(\eps))^2 \leq \mu(\eps)$, $\eps \in (0,1]$, and \begin{equation} \label{auxest3} \begin{aligned}
|\la \partial_x v_\nu(t,\cdot) | T_{\partial_t a_\nu}^m \partial_x v_\nu(t,\cdot)\ral| \leq C^{(6)}_{a,m}2^{2\nu}\|v_\nu(t,\cdot)\|^2_\Lt
\end{aligned} \end{equation} as well as \begin{equation} \label{auxest2} \begin{aligned}
& |\la ((T_{a_\nu}^m)^\ast-T_{a_\nu}^m)\partial_x v_\nu(t,\cdot) | \partial_t \partial_x v_\nu(t,\cdot) \ral| \\
& \qquad \qquad \leq C^{(7)}_{a,m}N_4 \|v_\nu(t,\cdot)\|_\Lt^2 + \frac{1}{N_4}\|\partial_t v_\nu(t,\cdot)\|_\Lt^2
\end{aligned} \end{equation} which follows from Proposition \ref{Adjoint}. Using again the positivity estimate in Proposition \ref{prop:Pos}, we obtain \begin{equation} \label{auxest1} \begin{aligned}
& - \alpha\log(2)(t+\tau)\nu \la \partial_x v_\nu(t,\cdot) | T_{a_\nu}^m \partial_x v_\nu(t,\cdot) \ral \\
& \qquad \qquad \qquad \qquad \leq -\alpha C^{(8)}_{a,m}(t+\tau)\nu 2^{2\nu} \|\partial_x v_\nu(t,\cdot)\|^2_{\Lt}.
\end{aligned} \end{equation} 
Now we choose $N_3$ and $N_4$ so large that \begin{equation*}
\frac{1}{N_3} + \frac{1}{N_4} - \frac{1}{2} < 0,
\end{equation*}
and $\alpha_1$ large enough so that \begin{equation*}
- \frac{\alpha_1}{2}C_{a,m}^{(8)} + N_3C_{a,m}^{(5)} + C_{a,m}^{(6)} + C_{a,m}^{(7)}N_4 < 0,
\end{equation*} and we set $\alpha := \max\{T^{-1}, \alpha_1\}$. 
 With this choice, we get \begin{equation} \label{finest1} \begin{aligned}
& \frac{\gamma}{4}\|v_\nu(t,\cdot)\|_\Lto^2 + \frac{1}{2} (t+\tau)\|\partial_t v_\nu(t,\cdot)\|_{L^2}^2 \\
& \qquad \leq \frac{\gamma}{2}\frac{d}{dt}\left( (t+\tau)\|v_\nu(t,\cdot)\|_\Lto^2 \right) - \frac{\gamma}{4}\|v_\nu(t,\cdot)\|_\Lto^2 \\
& \qquad\quad -\frac{1}{2}\frac{d}{dt}\left( (t+\tau)\Phi_\lambda'\left( \frac{t+\tau}{\beta} \right)\|v_\nu(t,\cdot)\|_\Lto^2 \right) \\
& \qquad\quad +\frac{1}{2}\Phi_\lambda'\left( \frac{t+\tau}{\beta} \right)\|v_\nu(t,\cdot)\|_\Lto^2 + \frac{1}{2}\frac{t+\tau}{\beta}\Phi_\lambda''\left( \frac{t+\tau}{\beta} \right)\|v_\nu(t,\cdot)\|_\Lto^2 \\
& \qquad\quad + \frac{1}{2}\frac{d}{dt}\left( (t+\tau)\la \partial_x v_\nu(t,\cdot) | T_{a_\nu}^m \partial_x v_\nu(t,\cdot) \ral \right) \\
& \qquad\quad - \alpha\gamma \log(2) (t+\tau) \nu \|v_\nu(t,\cdot)\|_\Lto^2 -\frac{1}{2}\la \partial_x v_\nu(t,\cdot) | T_{a_\nu}^m \partial_x v_\nu(t,\cdot) \ral \\
& \qquad\quad + \alpha\log(2)(t+\tau)\Phi_\lambda'\left( \frac{t+\tau}{\beta} \right)\nu \|v_\nu(t,\cdot)\|^2_\Lto \\
& \qquad\quad + \alpha^2(\log(2))^2 \nu^2 \|v_\nu(t,\cdot)\|_{L^2} -\frac{\alpha}{2} C^{(8)}_{a,m} (t+\tau) \nu 2^{2\nu} \|v_\nu(t,\cdot)\|_\Lt^2 \\
& \qquad\quad + \alpha\log(2)\nu 2^{-(s+\alpha t)\nu}(t+\tau)\la v_\nu(t,\cdot) | \pazocal X_\nu(t,\cdot) \ral \\
& \qquad\quad - (t+\tau)2^{-(s+\alpha t)\nu} \la \pazocal X_\nu(t,\cdot) | \partial_t v_\nu(t,\cdot) \ral.
\end{aligned}\end{equation} Since $y\Phi_\lambda''(y)=-\lambda \Phi_\lambda'(y)(1+|\log(\Phi_\lambda'(y))|)$, if we take $\lambda\geq\bar\lambda > 2$ we have \begin{equation*}
\frac{1}{4} \frac{t+\tau}{\beta} \Phi''_\lambda\left( \frac{t+\tau}{\beta} \right) \leq - \frac{1}{2}\Phi_\lambda'\left( \frac{t+\tau}{\beta} \right),
\end{equation*} and hence the term $\frac{1}{2}\Phi_\lambda'\left( \frac{t+\tau}{\beta} \right)\|v_\nu(t,\cdot)\|_\Lto^2$ in \eqref{finest1} is absorbed by the term $\frac{1}{4}\frac{t+\tau}{\beta}\Phi_\lambda''\left( \frac{t+\tau}{\beta} \right)\|v_\nu(t,\cdot)\|_\Lto^2$. Now we need to absorb \begin{equation} \label{term1}
\alpha\log(2)(t+\tau)\Phi_\lambda'\left( \frac{t+\tau}{\beta} \right)\nu \|v_\nu(t,\cdot)\|^2_\Lto.
\end{equation} There are two terms in \eqref{finest1} that will help to achieve this. One is \begin{equation} \label{term2}
-\frac{\alpha}{4} C^{(8)}_{a,m} (t+\tau) \nu 2^{2\nu} \|v_\nu(t,\cdot)\|_\Lt^2
\end{equation} and the other one is \begin{equation} \label{term3}
\frac{1}{4}\frac{t+\tau}{\beta}\Phi_\lambda''\left( \frac{t+\tau}{\beta} \right)\|v_\nu(t,\cdot)\|_\Lto^2.
\end{equation} Let $\tilde{C}_{a,m}^{(8)} = \min\{4\log(2), C_{a,m}^{(8)}\}$. If $\nu \geq (\log(2))^{-1}\log\big(\frac{4\log(2)}{\tilde{C}_{a,m}^{(8)}}\big)\Phi_\lambda'\left( \frac{t+\tau}{\beta} \right)$, then \begin{equation*}
-\frac{C_{a,m}^{(8)}}{4} \alpha \nu 2^{2\nu} \leq -\alpha\log(2)\Phi_\lambda'\left( \frac{t+\tau}{\beta} \right).
\end{equation*} If on the contrary $\nu < (\log(2))^{-1}\log\big(\frac{4\log(2)}{\tilde{C}_{a,m}^{(8)}}\big)\Phi_\lambda'\left( \frac{t+\tau}{\beta} \right)$ then $\Phi_\lambda'\left( \frac{t+\tau}{\beta} \right) > 2^{\nu}$ and, hence, by \eqref{eq:DefPhi}, we obtain \begin{equation*} \begin{aligned}
&\frac{1}{4}\frac{t+\tau}{\beta}\Phi_\lambda''\left(\frac{t+\tau}{\beta}\right) = -\frac{1}{4}\lambda \left( \Phi_\lambda'\left(\frac{t+\tau}{\beta}\right) \right)^2 \mu\left( \frac{1}{\Phi_\lambda'\left(\frac{t+\tau}{\beta}\right)} \right) \\
&\qquad \leq -\frac{1}{4}\lambda \left( \Phi_\lambda'\left(\frac{t+\tau}{\beta}\right) \right)^2 \mu\left( \frac{1}{\frac{4\log(2)}{\tilde{C}_{a,m}^{(8)}}\Phi_\lambda'\left(\frac{t+\tau}{\beta}\right)} \right) \\
&\qquad\leq -\frac{1}{4}\lambda \frac{\tilde{C}_{a,m}^{(8)}}{4\log(2)} \Phi_\lambda'\left(\frac{t+\tau}{\beta}\right)\left( 1+ \left| \log\left( \frac{4\log(2)}{\tilde{C}_{a,m}^{(8)}}\Phi_\lambda'\left(\frac{t+\tau}{\beta}\right) \right) \right| \right) \\
&\qquad\leq -\frac{1}{4}\lambda \frac{\tilde{C}_{a,m}^{(8)}}{4\log(2)} \Phi_\lambda'\left(\frac{t+\tau}{\beta}\right) (1+\nu\log(2)) \\
&\qquad\leq -\lambda C_{a,m}^{(9)} \Phi_\lambda'\left(\frac{t+\tau}{\beta}\right) \nu,
\end{aligned} \end{equation*} where we have used the fact  that the function $\mu$ is increasing. Consequently, if we choose $\lambda\geq\bar\lambda$ with \begin{equation*}
\bar\lambda \geq \frac{\alpha \log(2)\big(\frac{7}{8}\sigma+\tau\big)}{C_{a,m}^{(9)}},
\end{equation*} we have \begin{equation*}
\frac{1}{4}\frac{t+\tau}{\beta}\Phi''_\lambda\left( \frac{t+\tau}{\beta} \right) \leq -\alpha\log(2)(t+\tau)\Phi'_\lambda\left( \frac{t+\tau}{\beta} \right)
\end{equation*} and hence, the term \eqref{term1} is compensated by \eqref{term2} and \eqref{term3}. Now we consider the term \begin{equation} \label{term5}
\alpha^2\log^2(2) \nu^2 \|v_\nu(t,\cdot)\|_{L^2}.
\end{equation} If $\nu \geq (\log(2))^{-1} \log\left( \frac{\alpha(2\log(2))^2}{C_{a,m}^{(8)}} \right) =: \bar{\nu}_1$ then \begin{equation*}
-\frac{C_{a,m}^{(8)}}{4}\alpha \nu 2^{2\nu} + \alpha^2 \log^2(2) \nu^2 \leq 0.
\end{equation*} If $\nu \leq \bar{\nu}_1$, we choose $\bar{\gamma}$ possibly larger such that \begin{equation*}
\frac{\gamma}{4} \geq \alpha^2 \log^2(2) \bar{\nu}_1^2 \big( \frac{7}{8}\sigma + \tau \big)
\end{equation*} for all $\gamma \geq \bar{\gamma}$. We obtain \begin{equation*}
-\frac{\gamma}{4} + \alpha^2 \log^2(2) \nu \leq 0, 
\end{equation*} and consequently \eqref{term5} is absorbed by \begin{equation*}
-\frac{\alpha}{4} C^{(8)}_{a,m} (t+\tau) \nu 2^{2\nu} \|v_\nu(t,\cdot)\|_\Lt^2 - \frac{\gamma}{4}\|v_\nu(t,\cdot)\|_\Lto^2.
\end{equation*} The term $- \alpha\gamma \log(2) (t+\tau) \nu \|v_\nu(t,\cdot)\|_\Lto^2$ can be neglected since it is negative. However, we stress here that it is a crucial term in order to achieve our energy estimate for an equation including also lower order terms. Eventually,  recalling also Propositions \ref{PropBernstein} and \ref{prop:Pos}, we obtain \begin{equation*} \begin{aligned}
& \frac{1}{2}(t+\tau)\|\partial_t v_\nu(t,\cdot)\|_{L^2}^2 + \frac{\gamma}{8}\|v_\nu(t,\cdot)\|_\Lto^2 \\
& \qquad \leq \frac{\gamma}{2}\frac{d}{dt}\left( (t+\tau)\|v_\nu(t,\cdot)\|_\Lto^2 \right) - \frac{1}{2}\frac{d}{dt}\left( (t+\tau)\Phi_\lambda'\left( \frac{t+\tau}{\beta} \right)\|v_\nu(t,\cdot)\|_\Lto^2 \right) \\
& \qquad \quad  + \frac{1}{2}\frac{d}{dt}\left( (t+\tau)\la \partial_x v_\nu(t,\cdot) | T_{a_\nu}^m \partial_x v_\nu(t,\cdot) \ral \right) \\
& \qquad \quad - \frac\kappa82^{2\nu}\|v_\nu(t,\cdot)\|_{L^2}^2\\
& \qquad \quad - \frac{\alpha}{2}\log(2)C_{a,m}^{(8)}(t+\tau)\nu 2^{2\nu}\|v_\nu(t,\cdot)\|_\Lt^2 \\
& \qquad \quad + \alpha\log(2)\nu 2^{-(s+\alpha t)\nu}(t+\tau)\la v_\nu(t,\cdot) | \pazocal X_\nu(t,\cdot) \ral \\
& \qquad \quad - (t+\tau)2^{-(s+\alpha t)\nu} \la \pazocal X_\nu(t,\cdot) | \partial_t v_\nu(t,\cdot) \ral
- \frac{\gamma}{8}\|v_\nu(t,\cdot)\|_\Lto^2.
\end{aligned}\end{equation*}  Integrating over $[0,p] \subseteq [0,\frac{7}{8}\sigma]$, we get \begin{equation*} \begin{aligned}
& \frac{\kappa}{8} \int_0^p 2^{2\nu}\|v_\nu(t,\cdot)\|_\Lt^2 dt + \frac{\gamma}{8} \int_0^p \|v_\nu(t,\cdot)\|_\Lt^2 dt \\
& \qquad \leq \tau\Phi_\lambda'\left( \frac{\tau}{\beta} \right)\|v_\nu(0,\cdot)\|_\Lto^2 + \left(\frac{\gamma}{2}+C_{a,m}^{(10)} 2^{2\nu}\right)(p+\tau)\|v_\nu(p,\cdot)\|_\Lt^2 \\
& \qquad \quad - \frac{\alpha}{2}\log(2)C_{a,m}^{(8)} \int_0^p (t+\tau)\nu 2^{2\nu}\|v_\nu(t,\cdot)\|_\Lt^2 dt - \frac{\gamma}{8} \int_0^p \|v_\nu(t,\cdot)\|_\Lto^2 dt \\
& \qquad \quad  - \frac{1}{2}\int_0^p (t+\tau)\|\partial_t v_\nu(t,\cdot)\|_{L^2}^2 dt \\
& \qquad \quad - \int_0^p (t+\tau)2^{-(s+\alpha t)\nu} \la \pazocal X_\nu(t,\cdot) | \partial_t v_\nu(t,\cdot) \ral dt \\
& \qquad \quad + \alpha\log(2) \int_0^p \nu 2^{-(s+\alpha t)\nu}(t+\tau)\la v_\nu(t,\cdot) | \pazocal X_\nu(t,\cdot) \ral dt.
\end{aligned}\end{equation*} Now we sum over $\nu$ and we obtain \begin{equation*} \begin{aligned}
& \frac{\kappa}{8} \int_0^p \sum\limits_{\nu \geq 0}2^{2\nu}\|v_\nu(t,\cdot)\|_\Lt^2 dt + \frac{\gamma}{8} \int_0^p \sum\limits_{\nu \geq 0} \|v_\nu(t,\cdot)\|_\Lt^2 dt \\
& \qquad \leq \tau\Phi_\lambda'\left( \frac{\tau}{\beta} \right)\sum\limits_{\nu \geq 0}\|v_\nu(0,\cdot)\|_\Lto^2 - \frac{\gamma}{8} \int_0^p \sum\limits_{\nu \geq 0} \|v_\nu(t,\cdot)\|_\Lto^2 dt \\
& \qquad \quad - \frac{1}{2}\int_0^p (t+\tau) \sum\limits_{\nu \geq 0} \|\partial_t v_\nu(t,\cdot)\|_{L^2}^2 dt \\
& \qquad \quad + \frac{\gamma}{2}(p+\tau)\sum\limits_{\nu \geq 0}\|v_\nu(p,\cdot)\|_\Lt^2 + C_{a,m}^{(10)} (p+\tau)\sum\limits_{\nu \geq 0} 2^{2\nu}\|v_\nu(p,\cdot)\|_\Lt^2 \\
& \qquad \quad - \frac{\alpha}{2}\log(2)C_{a,m}^{(8)} \int_0^p (t+\tau)\sum\limits_{\nu \geq 0}\nu 2^{2\nu}\|v_\nu(t,\cdot)\|_\Lt^2 dt \\
& \qquad \quad - \int_0^p (t+\tau) \sum\limits_{\nu \geq 0} 2^{-(s+\alpha t)\nu} \la \pazocal X_\nu(t,\cdot) | \partial_t v_\nu(t,\cdot) \ral dt \\
& \qquad \quad + \alpha\log(2) \int_0^p (t+\tau) \sum\limits_{\nu \geq 0} \nu 2^{-(s+\alpha t)\nu}\la v_\nu(t,\cdot) | \pazocal X_\nu(t,\cdot) \ral dt.
\end{aligned}\end{equation*} 
Using the results from Sections \ref{sec:Aux1} and \ref{sec:Aux2}, we have the estimates \begin{equation*} \begin{aligned}
& - \int_0^p (t+\tau) \sum\limits_{\nu \geq 0} 2^{-(s+\alpha t)\nu} \la \pazocal X_\nu(t,\cdot) | \partial_t v_\nu(t,\cdot) \ral dt \\
& \qquad \leq \eta \int_0^p (t+\tau) \sum\limits_{\nu \geq 0} \|\partial_t v_\nu(t,\cdot)\|_\Lt^2 dt \\
& \qquad \quad + \frac{C_{a,m,s}^{(11)}}{\eta} \int_0^p (t+\tau) \sum\limits_{\nu \geq 0} 2^{2\nu} \|v_\nu(t,\cdot)\|_\Lt^2 dt
\end{aligned}\end{equation*} and \begin{equation*} \begin{aligned}
&\alpha\log(2) \int_0^p (t+\tau) \sum\limits_{\nu \geq 0} \nu 2^{-(s+\alpha t)\nu}\la v_\nu(t,\cdot) | \pazocal X_\nu(t,\cdot) \ral dt \\
& \qquad \leq \alpha\log(2) C^{12}_{a,m,s}\int_0^p (t+\tau) \sum\limits_{\nu \geq 0} 2^{2\nu}\|v_\nu(t,\cdot)\|_\Lt^2 dt.
\end{aligned}\end{equation*}

\subsection{End of the proof}

So far we have obtained \begin{equation*} \begin{aligned}
& \frac{\kappa}{8} \int_0^p \sum\limits_{\nu \geq 0}2^{2\nu}\|v_\nu(t,\cdot)\|_\Lt^2 dt + \frac{\gamma}{8} \int_0^p \sum\limits_{\nu \geq 0} \|v_\nu(t,\cdot)\|_\Lt^2 dt \\
& \qquad \leq \tau\Phi_\lambda'\left( \frac{\tau}{\beta} \right)\sum\limits_{\nu \geq 0}\|v_\nu(0,\cdot)\|_\Lto^2 - \frac{\gamma}{8} \int_0^p \sum\limits_{\nu \geq 0} \|v_\nu(t,\cdot)\|_\Lto^2 dt \\
& \qquad \quad + \frac{\gamma}{2}(p+\tau)\sum\limits_{\nu \geq 0}\|v_\nu(p,\cdot)\|_\Lt^2 + C_{a,m}^{(10)} (p+\tau)\sum\limits_{\nu \geq 0} 2^{2\nu}\|v_\nu(p,\cdot)\|_\Lt^2 \\
& \qquad \quad - \frac{\alpha}{2}\log(2)C_{a,m}^{(8)} \int_0^p (t+\tau)\sum\limits_{\nu \geq 0}\nu 2^{2\nu}\|v_\nu(t,\cdot)\|_\Lt^2 dt \\
& \qquad \quad  - \frac{1}{2}\int_0^p (t+\tau) \sum\limits_{\nu \geq 0} \|\partial_t v_\nu(t,\cdot)\|_{L^2}^2 dt + \eta \int_0^p (t+\tau) \sum\limits_{\nu \geq 0} \|\partial_t v_\nu(t,\cdot)\|_\Lt^2 dt \\
& \qquad \quad + \left(\alpha\log(2) C^{12}_{a,m,s}+\frac{C_{a,m,s}^{(11)}}{\eta}\right) \int_0^p (t+\tau) \sum\limits_{\nu \geq 0} 2^{2\nu}\|v_\nu(t,\cdot)\|_\Lt^2 dt.
\end{aligned}\end{equation*} Now we take $\eta < \frac{1}{2}$ and choose $\bar{\nu}_2 := \left \lceil{\left(\alpha\log(2) C^{12}_{a,m,s}+\frac{C_{a,m,s}^{(11)}}{\eta}\right) \frac{2}{\alpha \log(2) C_{a,m}^{(8)}}}\right \rceil$. With this, we have \begin{eqnarray*}
&& - \frac{\alpha}{2}\log(2)C_{a,m}^{(8)} \int_0^p (t+\tau)\sum\limits_{\nu \geq \bar{\nu}_2}\nu 2^{2\nu}\|v_\nu(t,\cdot)\|_\Lt^2 dt \\
&& \qquad \left(\alpha\log(2) C^{12}_{a,m,s}+\frac{C_{a,m,s}^{(11)}}{\eta}\right) \int_0^p (t+\tau) \sum\limits_{\nu \geq \bar{\nu}_2} 2^{2\nu}\|v_\nu(t,\cdot)\|_\Lt^2 dt \leq 0.
\end{eqnarray*} To absorb the remaining parts of the sum, we choose $\bar{\gamma}$ larger (if nescessary) such that \begin{equation*}
-\frac{\gamma}{8} + \left( \frac{7}{8}\sigma + \tau \right)\left(\alpha\log(2) C^{12}_{a,m,s}+\frac{C_{a,m,s}^{(11)}}{\eta}\right) 2^{2\bar{\nu}_2} < 0
\end{equation*} for all $\gamma \geq \bar{\gamma}$. This leads to \begin{equation*} \begin{aligned}
& - \frac{\gamma}{8} \int_0^p \sum\limits_{\nu < \bar{\nu}_2} \|v_\nu(t,\cdot)\|_\Lto^2 dt \\
& \qquad + \left(\alpha\log(2) C^{12}_{a,m,s}+\frac{C_{a,m,s}^{(11)}}{\eta}\right) \int_0^p (t+\tau) \sum\limits_{\nu < \bar{\nu}_2} 2^{2\nu}\|v_\nu(t,\cdot)\|_\Lt^2 dt \leq 0.
\end{aligned}\end{equation*} All in all, we finally obtain \begin{equation*} \begin{aligned}
& \frac{\kappa}{8} \int_0^p \sum\limits_{\nu \geq 0} 2^{2\nu}\|v_\nu(t,\cdot)\|_\Lt^2 dt + \frac{\gamma}{8} \int_0^p \sum\limits_{\nu \geq 0} \|v_\nu(t,\cdot)\|_\Lt^2 dt \\
& \qquad \leq \tau\Phi_\lambda'\left( \frac{\tau}{\beta} \right)\sum\limits_{\nu \geq 0}\|v_\nu(0,\cdot)\|_\Lto^2 + \frac{\gamma}{2}(p+\tau)\sum\limits_{\nu \geq 0}\|v_\nu(p,\cdot)\|_\Lt^2 \\
& \qquad \quad + C_{a,m}^{(10)} (p+\tau)\sum\limits_{\nu \geq 0} 2^{2\nu}\|v_\nu(p,\cdot)\|_\Lt^2.
\end{aligned}\end{equation*} From this, going back to $u_\nu$, we have \begin{equation*} \begin{aligned}
&\frac{\kappa}{8} \int_0^p   e^{2\gamma t} e^{-2\beta\Phi_\lambda\left(\frac{t+\tau}{\beta}\right)} \sum\limits_{\nu \geq 0} 2^{2(1-s-\alpha t)\nu}\|u_\nu(t,\cdot)\|_\Lt^2 dt \\
& \qquad + \frac{\gamma}{8} \int_0^p e^{2\gamma t} e^{-2\beta\Phi_\lambda\left(\frac{t+\tau}{\beta}\right)} \sum\limits_{\nu \geq 0} 2^{-2(s+\alpha t)\nu}\|u_\nu(t,\cdot)\|^2_\Lt dt \\
& \leq C_{a,m}^{(10)} (p+\tau)e^{2\gamma p} e^{-2\beta\Phi_\lambda\left(\frac{p+\tau}{\beta}\right)} \sum\limits_{\nu \geq 0} 2^{2(1-s-\alpha p)\nu}\|u_\nu(p,\cdot)\|_\Lt^2 \\
& \qquad + \frac{\gamma}{2}(p+\tau)e^{2\gamma p} e^{-2\beta\Phi_\lambda\left(\frac{p+\tau}{\beta}\right)} \sum\limits_{\nu \geq 0} 2^{-(s+\alpha p)\nu} \|u_\nu(p,\cdot)\|_\Lt^2 \\
& \qquad + \tau\Phi_\lambda'\left( \frac{\tau}{\beta} \right) e^{-2\beta\Phi_\lambda\left(\frac{\tau}{\beta}\right)} \sum\limits_{\nu \geq 0} 2^{-2s\nu} \|u_\nu(0,\cdot)\|_\Lto^2.
\end{aligned}\end{equation*} Using Proposition \ref{SobolevLP}, the weighted energy estimate \eqref{MainIn} follows. \qed


\section{Proof of Theorem \ref{thm:Stab}} \label{sec:Derivation}

In this section we show how the stability estimate in Theorem \ref{thm:Stab} follows from the energy estimate in Theorem \ref{thm:Energy}.
To this end, we need two lemmas whose proof is left to the reader. 

\begin{lem} \label{lem:aux3}  There exists $\gamma_0>0$ such that if $\gamma \geq \gamma_0$ then, for every $u \in \pazocal H$ solution of \eqref{BWeq}, the function $E(t) = e^{2\gamma t}\|u(t,\cdot)\|_{L^2}^2$ is not decreasing in $[0,T]$. \end{lem}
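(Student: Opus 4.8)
The plan is to differentiate $E$ on the half-open interval where the solution has enough regularity, to evaluate the derivative by means of the equation, and then to upgrade monotonicity to the closed interval $[0,T]$ by continuity. First I would observe that, since $u\in C^0([0,T),H^1(\R^n_x))\cap C^1([0,T),L^2(\R^n_x))$, the $L^2$-valued map $t\mapsto u(t,\cdot)$ is of class $C^1$ on $[0,T)$, so that $t\mapsto\|u(t,\cdot)\|_{L^2}^2$ is $C^1$ there with $\frac{d}{dt}\|u(t,\cdot)\|_{L^2}^2 = 2\real\la\partial_t u(t,\cdot)\,|\,u(t,\cdot)\ral$. Hence $E\in C^1([0,T))$ and $E'(t) = e^{2\gamma t}\big(2\gamma\|u(t,\cdot)\|_{L^2}^2 + 2\real\la\partial_t u(t,\cdot)\,|\,u(t,\cdot)\ral\big)$.

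Next I would insert the equation $\partial_t u = -\sum_{j,k}\partial_{x_j}(a_{jk}\partial_{x_k}u)$ and integrate by parts in $x$. This is the one step that genuinely needs an argument under the low regularity at hand: for $t\in[0,T)$ one has $u(t,\cdot)\in H^1$ and $\partial_t u(t,\cdot)\in L^2$, so the equation forces $\sum_{j,k}\partial_{x_j}(a_{jk}\partial_{x_k}u)\in L^2$; since each $a_{jk}(t,\cdot)$ is Lipschitz, $a_{jk}\partial_{x_k}u\in H^1(\R^n_x)$, and the pairing of two $H^1(\R^n_x)$ functions integrates by parts with no boundary contribution. This yields $\real\la\partial_t u\,|\,u\ral = \sum_{j,k}\int_{\R^n_x} a_{jk}(t,x)\,\partial_{x_k}u\,\overline{\partial_{x_j}u}\,dx \geq \kappa\|\nabla_x u(t,\cdot)\|_{L^2}^2 \geq 0$, where the lower bound is the ellipticity condition applied pointwise to $\xi=\nabla_x u(t,x)$ (the integrand is real because $(a_{jk})$ is real and symmetric). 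Therefore $E'(t)\geq 2\gamma e^{2\gamma t}\|u(t,\cdot)\|_{L^2}^2\geq 0$ on $[0,T)$ for every $\gamma\geq 0$, so in fact any $\gamma_0>0$ suffices for the principal-part equation \eqref{BWeq}. When lower-order terms are present (Remark \ref{rem:LowerOrder}), their contribution to $\real\la\partial_t u\,|\,u\ral$ is bounded below by $-\varepsilon\|\nabla_x u\|_{L^2}^2 - C_\varepsilon\|u\|_{L^2}^2$; choosing $\varepsilon<\kappa$ absorbs the gradient part into the ellipticity term and setting $\gamma_0 := C_\varepsilon$ keeps $E'\geq 0$, which is precisely where the positivity of $\gamma_0$ becomes essential.

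Finally I would pass from $[0,T)$ to the closed interval $[0,T]$. Since $E'\geq 0$ on $[0,T)$, the function $E$ is nondecreasing there, and because $u\in C^0([0,T],L^2(\R^n_x))$ yields $E\in C^0([0,T])$, for any $0\leq s<t\leq T$ one concludes $E(s)\leq E(t)$: if $t<T$ this is immediate, and if $t=T$ one lets $t_n\uparrow T$ in $E(s)\leq E(t_n)$ and uses continuity of $E$ at $T$. I do not anticipate a serious obstacle here, since the computation is a routine weighted $L^2$-energy identity; the only point that actually uses the hypotheses is the integration by parts under merely Lipschitz $x$-regularity of the coefficients, and that is handled by the observation above that $a_{jk}\partial_{x_k}u\in H^1(\R^n_x)$.
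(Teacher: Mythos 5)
Your strategy is the standard one and is surely what the authors intend (the paper leaves the proof of Lemma \ref{lem:aux3} to the reader): differentiate $E$ on $[0,T)$ using the $C^1([0,T),L^2)$ regularity, substitute the equation, integrate by parts in $x$, invoke ellipticity to get $E'\geq 0$ --- indeed for every $\gamma\geq 0$, the positive threshold $\gamma_0$ only becoming relevant when lower-order terms are present --- and extend monotonicity to $t=T$ by continuity. The sign bookkeeping for the backward operator is correct: $\real\la \partial_t u\,|\,u\ra=\sum_{j,k}\int_{\R^n_x} a_{jk}\,\partial_{x_k}u\,\overline{\partial_{x_j}u}\,dx\geq\kappa\|\nabla_x u(t,\cdot)\|_{L^2}^2$, so $\|u(t,\cdot)\|_{L^2}$ grows forward in time.

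The one step you yourself flagged as delicate is, however, not justified by the argument you give. From $u(t,\cdot)\in H^1(\R^n_x)$ and $a_{jk}(t,\cdot)\in\Lip(\R^n_x)$ you cannot conclude $a_{jk}\partial_{x_k}u\in H^1(\R^n_x)$: differentiating the product produces the term $a_{jk}\,\partial_{x_j}\partial_{x_k}u$, which would require $u(t,\cdot)\in H^2$, and no such regularity is contained in $\pazocal H$. What is true, and suffices, is that the vector field $F=(F_j)_j$ with $F_j=\sum_k a_{jk}\partial_{x_k}u$ belongs to $L^2$ and has distributional divergence $\operatorname{div}F=-\partial_t u\in L^2$; for $F$ in this class and $u\in H^1(\R^n_x)$ the identity $\la \operatorname{div}F\,|\,u\ra=-\la F\,|\,\nabla_x u\ra$ holds with no boundary contribution, by density of smooth compactly supported fields (equivalently, one may mollify in $x$ and pass to the limit). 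Note that this integrates only the full sum by parts, not each term $\partial_{x_j}(a_{jk}\partial_{x_k}u)$ separately --- but the energy identity needs only the sum. With this repair the proof is complete; the rest of your argument, including the treatment of lower-order terms and the passage to the closed interval, is fine.
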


The next lemma contains an estimate of the $H^1$-norm of a solution of \eqref{BWeq} by its $L^2$-norm. This estimate is crucial in gaining \eqref{est:Stab} from \eqref{EnergyEst1}.

\begin{lem} \label{lem:aux4} There axist a constant $C$ such that, if $u \in \pazocal H$ be a solution of \eqref{BWeq} in $[0,T]$, then
\begin{equation*}
\inf_{t\in[\frac57\sigma,\frac78\sigma]}\|u(t)\|_{H^1} ^2\leq \frac C\sigma\sup_{t\in[\frac57\sigma,\frac78\sigma]} \|u(t)\|_{L^2}^2.
\end{equation*} The constant $C$ depends only on $\kappa$.\end{lem}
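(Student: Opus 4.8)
The plan is to reduce the statement to the elementary $L^2$ energy identity for \eqref{BWeq} together with an averaging argument on $I := [\frac{5}{7}\sigma,\frac{7}{8}\sigma]$. Note that $I \subset [0,\frac{7}{8}\sigma] \subset [0,T)$, so on $I$ the solution enjoys the full regularity $u \in C^0(H^1) \cap C^1(L^2)$ built into $\pazocal H$.

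First I would establish the energy identity. Pairing \eqref{BWeq} with $u(t,\cdot)$ in $L^2$ and integrating by parts in $x$ gives, for every $t \in [0,T)$,
\begin{equation*}
\tfrac{1}{2}\tfrac{d}{dt}\|u(t,\cdot)\|_{L^2}^2 = \la \partial_t u(t,\cdot) | u(t,\cdot) \ral = \sum_{j,k} \la a_{jk}(t,\cdot)\partial_{x_k} u(t,\cdot) | \partial_{x_j} u(t,\cdot) \ral \geq \kappa \|\nabla_x u(t,\cdot)\|_{L^2}^2,
\end{equation*}
the last inequality being the ellipticity assumption \eqref{ellipticity}. In particular $t \mapsto \|u(t,\cdot)\|_{L^2}^2$ is $C^1$ and non-decreasing on $[0,T)$.

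Next I would integrate this identity over $I$ and discard the non-positive boundary contribution at the left endpoint, obtaining
\begin{equation*}
\kappa \int_I \|\nabla_x u(t,\cdot)\|_{L^2}^2\,dt \leq \tfrac{1}{2}\big( \|u(\tfrac{7}{8}\sigma,\cdot)\|_{L^2}^2 - \|u(\tfrac{5}{7}\sigma,\cdot)\|_{L^2}^2 \big) \leq \tfrac{1}{2} \sup_{t \in I}\|u(t,\cdot)\|_{L^2}^2.
\end{equation*}
Since $|I| = \frac{9}{56}\sigma$ is a fixed multiple of $\sigma$ and $t \mapsto \|\nabla_x u(t,\cdot)\|_{L^2}^2$ is continuous, the mean value theorem for integrals produces a point $t^\ast \in I$ with
\begin{equation*}
\|\nabla_x u(t^\ast,\cdot)\|_{L^2}^2 = \frac{1}{|I|}\int_I \|\nabla_x u(t,\cdot)\|_{L^2}^2\,dt \leq \frac{1}{2\kappa |I|}\sup_{t \in I}\|u(t,\cdot)\|_{L^2}^2 = \frac{28}{9\kappa\,\sigma}\sup_{t \in I}\|u(t,\cdot)\|_{L^2}^2.
\end{equation*}

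Finally I would evaluate the $H^1$ norm at $t^\ast$, splitting it into its $L^2$ and gradient parts:
\begin{equation*}
\inf_{t \in I}\|u(t,\cdot)\|_{H^1}^2 \leq \|u(t^\ast,\cdot)\|_{L^2}^2 + \|\nabla_x u(t^\ast,\cdot)\|_{L^2}^2 \leq \Big( 1 + \frac{28}{9\kappa\,\sigma} \Big)\sup_{t\in I}\|u(t,\cdot)\|_{L^2}^2.
\end{equation*}
The only non-automatic point is that the first summand does not a priori carry the required factor $\sigma^{-1}$. This is settled by recalling $\alpha = \max\{\alpha_1,T^{-1}\} \geq \alpha_1 \geq 1$ (one may always take $\alpha_1 \geq 1$ in Proposition \ref{thm:Energy}), whence $\sigma = \frac{1-s}{\alpha} \leq 1$ and therefore $1 \leq \sigma^{-1}$; absorbing the $L^2$-term in this way yields the claim with $C = 1 + \frac{28}{9\kappa}$, depending only on $\kappa$. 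The main, and fairly mild, obstacle is the rigorous justification of the integration by parts at the regularity level of $\pazocal H$: one tests the weak formulation of \eqref{BWeq} against $u(t,\cdot) \in H^1$, which is legitimate since $\partial_t u(t,\cdot) \in L^2$ and smooth functions are dense in $H^1$.
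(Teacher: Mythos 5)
Your proof is correct: the backward energy identity $\tfrac12\tfrac{d}{dt}\|u\|_{L^2}^2\geq\kappa\|\nabla_x u\|_{L^2}^2$, integration over the interval of length $\tfrac{9}{56}\sigma$, and the averaging step are exactly what produces the $\nicefrac{C}{\sigma}$ factor with $C$ depending only on $\kappa$ (granting the harmless normalization $\alpha_1\geq 1$ to absorb the $L^2$ part). The paper leaves this proof to the reader, so there is nothing to compare against, but this is evidently the intended argument.
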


We start from the inequality \begin{eqnarray*}
&& \int_0^p e^{2\gamma t} e^{-2\beta \Phi_\lambda\left( \frac{t+\tau}{\beta} \right)} \|u(t,\cdot)\|^2_{H^{1-s-\alpha t}} dt \\
&& \qquad \leq M\Big[ (p+\tau)e^{2\gamma p}e^{-2\beta \Phi_\lambda\left( \frac{p+\tau}{\beta} \right)} \|u(p,\cdot)\|^2_{H^{1-s-\alpha p}} \\
&& \qquad \qquad + \tau \Phi_\lambda'\left( \frac{\tau}{\beta} \right) e^{-2\beta \Phi_\lambda\left( \frac{\tau}{\beta} \right)} \|u(0,\cdot)\|^2_{H^{-s}} \Big]
\end{eqnarray*} which is valid for $p \in [0,\frac{7}{8}\sigma]$, $\sigma := \frac{1-s}{\alpha}$. For every $\sigma^\ast \in (\frac{5}{8}\sigma,\frac{7}{8}\sigma)$, we have \begin{eqnarray*}
&& \int_0^{\sigma^\ast} e^{2\gamma t} e^{-2\beta \Phi_\lambda\left( \frac{t+\tau}{\beta} \right)} \|u(t,\cdot)\|^2_{H^{1-s-\alpha t}} dt \\
&& \qquad \leq M\Big[ (\sigma^\ast+\tau)e^{2\gamma \sigma^\ast}e^{-2\beta \Phi_\lambda\left( \frac{\sigma^\ast+\tau}{\beta} \right)} \|u(\sigma^\ast,\cdot)\|^2_{H^{1-s-\alpha \sigma^\ast}} \\
&& \qquad \qquad + \tau \Phi_\lambda'\left( \frac{\tau}{\beta} \right) e^{-2\beta \Phi_\lambda\left( \frac{\tau}{\beta} \right)} \|u(0,\cdot)\|^2_{H^{-s}} \Big],
\end{eqnarray*} where $\beta \geq \sigma + \tau$. Now we take $p \in [0,\bar{\sigma}]$ with $\bar{\sigma} = \frac{1}{2}\left(\frac{\sigma}{2}-\tau \right)= \frac{\sigma}{8}$, so $2p+\tau \leq 2\bar{\sigma}+\tau = \frac{\sigma}{2} < \frac{5}{8}\sigma < \sigma^\ast$, and hence \begin{eqnarray*}
&& \int_p^{2p+\tau} e^{2\gamma t} e^{-2\beta \Phi_\lambda\left( \frac{t+\tau}{\beta} \right)} \|u(t,\cdot)\|^2_{H^{1-s-\alpha t}} dt \\
&& \qquad \leq M\Big[ (\sigma^\ast+\tau)e^{2\gamma \sigma^\ast}e^{-2\beta \Phi_\lambda\left( \frac{\sigma^\ast+\tau}{\beta} \right)} \|u(\sigma^\ast,\cdot)\|^2_{H^{1-s-\alpha \sigma^\ast}} \\
&& \qquad \qquad + \tau \Phi_\lambda'\left( \frac{\tau}{\beta} \right) e^{-2\beta \Phi_\lambda\left( \frac{\tau}{\beta} \right)} \|u(0,\cdot)\|^2_{H^{-s}} \Big].
\end{eqnarray*} Since $\frac{1}{8}(1-s) \leq 1-s-\alpha t \leq 1-s$, we have \begin{equation*}
\|u(t,\cdot)\|_{H^{1-s-\alpha t}} \geq \|u(t,\cdot)\|_{L^2}.
\end{equation*} Hence, with Lemma \ref{lem:aux3}, \begin{eqnarray*}
&& e^{2\gamma p} (p+\tau) \|u(p,\cdot)\|^2_{L^2} e^{-2\beta \Phi_\lambda\left( \frac{2p+2\tau}{\beta} \right)} \\
&& \qquad \leq M\Big[ (\sigma^\ast+\tau)e^{2\gamma \sigma^\ast}e^{-2\beta \Phi_\lambda\left( \frac{\sigma^\ast+\tau}{\beta} \right)} \|u(\sigma^\ast,\cdot)\|^2_{H^{1-s-\alpha \sigma^\ast}} \\
&& \qquad \qquad + \tau \Phi_\lambda'\left( \frac{\tau}{\beta} \right) e^{-2\beta \Phi_\lambda\left( \frac{\tau}{\beta} \right)} \|u(0,\cdot)\|^2_{H^{-s}} \Big].
\end{eqnarray*} 
Since $\Phi'_{\lambda}\geq1$, we have \begin{eqnarray*}
\|u(p,\cdot)\|_{L^2}^2 &\leq& M \frac{\sigma^\ast+\tau}{\tau} e^{2\gamma \sigma^\ast} \Phi_\lambda'\left( \frac{\tau}{\beta} \right) \\
&& \qquad \times \Big[
e^{2\beta\Phi_\lambda\left(\frac{\sigma/2+\tau}{\beta}\right)-2\beta\Phi_\lambda\left(\frac{\sigma^\ast+\tau}{\beta}\right)}\|u(\sigma^\ast,\cdot)\|^2_{H^{1-s-\alpha \sigma^\ast}} \\
&& \qquad \quad \qquad + e^{2\beta\Phi_\lambda\left(\frac{\sigma/2+\tau}{\beta}\right)-2\beta\Phi_\lambda\left(\frac{\tau}{\beta}\right)} \|u(0,\cdot)\|^2_{H^{-s}}
\Big] \\
 \qquad &\leq& \tilde{M} \Phi_\lambda'\left( \frac{\tau}{\beta} \right) e^{2\beta\Phi_\lambda\left( \frac{\sigma/2+\tau}{\beta} \right)-2\beta\Phi_\lambda'\left( \frac{\sigma^\ast+\tau}{\beta} \right)} \\
&& \qquad \qquad \times\left[ \|u(\sigma^\ast,\cdot)\|^2_{H^{1-s-\alpha \sigma^\ast}}
+ e^{-\beta\Phi_\lambda\left( \frac{\tau}{\beta} \right)}\|u(0,\cdot)\|^2_{H^{-s}}
\right].
\end{eqnarray*} Now we have $\frac{\sigma^\ast+\tau}{\beta} \geq \frac{\frac{5}{8}\sigma +\tau}{\beta}$, which implies \begin{equation*}
\Phi_\lambda\left( \frac{\sigma^\ast + \tau}{\beta} \right) \geq \Phi_\lambda\left( \frac{\frac{5}{8}\sigma + \tau}{\beta} \right)
\end{equation*} and hence \begin{eqnarray*}
\|u(p,\cdot)\|_{L^2}^2 &\leq& \tilde{M} \Phi_\lambda'\left( \frac{\tau}{\beta} \right) e^{2\beta\Phi_\lambda\left( \frac{\sigma/2+\tau}{\beta} \right)-2\beta\Phi_\lambda\left( \frac{5\sigma/8+\tau}{\beta} \right)} \\
&& \qquad \times \left[ \|u(\sigma^\ast,\cdot)\|^2_{H^{1-s-a\alpha \sigma^\ast}}
+ e^{-\beta\Phi_\lambda\left( \frac{\tau}{\beta} \right)}\|u(0,\cdot)\|^2_{H^{-s}}
\right].
\end{eqnarray*} By the concavity of $\Phi_\lambda$, we have \begin{eqnarray*}
&& 2\beta\Phi_\lambda\left( \frac{\sigma /2+\tau}{\beta} \right)-2\beta\Phi_\lambda\left( \frac{5\sigma /8+\tau}{\beta} \right) \\
&& \qquad \leq \Phi'_\lambda\left( \frac{5\sigma/8+\tau}{\beta} \right)\left( \frac{\sigma/2+\tau}{\beta} - \frac{5\sigma /8+\tau}{\beta} \right)
= -\Phi_\lambda'\left( \frac{5\sigma /8 +\tau}{\beta} \right)\frac{\sigma}{8 \beta}.
\end{eqnarray*} This implies \begin{eqnarray*}
\|u(p,\cdot)\|^2_{L^2} &\leq& \tilde{M} \Phi_\lambda'\left( \frac{\tau}{\beta} \right) e^{-\frac{\sigma}{4}\Phi_\lambda'\left( \frac{5\sigma/8+\tau}{\beta} \right)} \\
&& \qquad \times \left( \|u(\sigma^\ast,\cdot)\|^2_{H^{1-s-\alpha \sigma^\ast}} + e^{-2\beta\Phi_\lambda\left(\frac{\tau}{\beta}\right)}\|u(0,\cdot)\|_{H^{-s}} \right).
\end{eqnarray*}
By Lemma \ref{weight-properties}, we have \begin{eqnarray*}
\Phi_\lambda'\left( \frac{5\sigma /8 +\tau}{\beta} \right) &=& \Psi_\lambda\left( \frac{5\sigma/8 +\tau}{\tau}\frac{\tau}{\beta} \right) \\
&=& \exp\left( \left(\frac{5\sigma/8 +\tau}{\tau}\right)^{-\lambda}-1 \right)\Psi_\lambda\left( \frac{\tau}{\beta} \right)^{\left(\frac{5\sigma/8 +\tau}{\tau}\right)^{-\lambda}}.
\end{eqnarray*} 
Setting $\tilde{\delta} := \left(\frac{5\sigma/8 +\tau}{\tau}\right)^{-\lambda}$, we have \begin{equation*}
\|u(p,\cdot)\|^2_{L^2} \leq \tilde{M} \psi_\lambda\left( \frac{\tau}{\beta} \right)e^{-\tilde{N}\psi_\lambda\left( \frac{\tau}{\beta} \right)^{\tilde{\delta}}} \left( \|u(\sigma^\ast,\cdot)\|^2_{H^1} + e^{-2\beta\Phi_\lambda\left( \frac{\tau}{\beta} \right)}\|u(0,\cdot)\|^2_{H^{-s}} \right).
\end{equation*} 
Now we choose $\beta$ such that \begin{equation*}
e^{-\beta\Phi_\lambda\left( \frac{\tau}{\beta} \right)} = \|u(0,\cdot)\|^{-1}_{H^{-s}}
\end{equation*} 
that is
\begin{equation*}
\beta=\tau\Lambda^{-1}\left(\frac1\tau\log\|u(0,\cdot)\|_{L^2}\right).
\end{equation*}
Then there exists $\bar\rho>0$ such that, if $\|u(0,\cdot)\|_{L^2}\leq\bar\rho$, then $\beta\geq\sigma+\tau$. 
With this choice and thanks to Lemma \ref{weight-properties}, we get \begin{equation*}
\|u(p,\cdot)\|_{L^2}^2 \leq \tilde{\tilde{M}} \exp\left( -\tilde{\tilde{N}}\left[ \frac{1}{\tau}\log\left( \|u(0,\cdot)\|_{H^{-s}} \right) \right]^{\tilde\delta} \right) \left( \|u(\sigma^\ast,\cdot)\|^2_{H^1} + 1  \right)
\end{equation*} for all $\sigma^\ast \in [\frac{5}{8}\sigma,\frac{7}{8}\sigma]$ and for all $p \in [0, \frac{\sigma}{8}]$. By Lemma \ref{lem:aux4}, we finally get \begin{eqnarray*}
\|u(p,\cdot)\|_{L^2}^2 \leq C e^{-\tilde{\tilde{N}}\left[\frac{1}{\tau}\left| \log\left( \|u(0,\cdot)\|_{H^{-s}}\right)\right|\right]^{\tilde\delta}} \left( \max\limits_{t \in [\frac{5}{8}\sigma,\frac{7}{8}\sigma]} \|u(t,\cdot)\|_{L^2}^2 + 1 \right).
\end{eqnarray*} This completes the proof of Theorem \ref{thm:Stab}. \qed



\section*{Appendix}
\addcontentsline{toc}{section}{Appendix}

\begin{proof}[Proof of Proposition \ref{prop:AuxP1}]
To estimate \begin{equation*}
\sum\limits_{\nu \geq 0} 2^{-(s+\alpha t)\nu}\la \partial_{x_i} \partial_t v_\nu(t,\cdot) | \Delta_\nu((a-T_a^m)\partial_{x_j} w(t,\cdot)) \ral,
\end{equation*} we introduce a second microlocalization: Setting $w(t,\cdot) = \sum_{\mu \geq 0} w_\mu(t,\cdot)$ and $w_\mu(t,\cdot) = 2^{(s+\alpha t)\nu}v_\mu(t,\cdot)$ (see Section \ref{sec:Prel}) we obtain, using Proposition \ref{PropBernstein}, that \begin{eqnarray*}
&& \sum\limits_{\nu \geq 0} 2^{-(s+\alpha t)\nu}\la \partial_{x_i} \partial_t v_\nu(t,\cdot) | \Delta_\nu((a-T_a^m)\partial_{x_j} w(t,\cdot)) \ral \\
&& \qquad = \sum\limits_{\nu \geq 0}\sum\limits_{\mu \geq 0}  2^{-(s+\alpha t)\nu}\la \partial_{x_i} \partial_t v_\nu(t,\cdot)| \Delta_\nu((a-T_a^m)\partial_{x_j} w_\mu(t,\cdot)) \ral \\
&& \qquad = \sum\limits_{\nu \geq 0}\sum\limits_{\mu \geq 0}  2^{-(s+\alpha t)(\nu-\mu)}\la \partial_{x_i} \partial_t v_\nu(t,\cdot) | \Delta_\nu((a-T_a^m)\partial_{x_j} v_\mu(t,\cdot)) \ral \\
&& \qquad \leq \sum\limits_{\nu \geq 0}\sum\limits_{\mu \geq 0}  2^{-(s+\alpha t)(\nu-\mu)} 2^\nu \|\partial_t v_\nu(t,\cdot)\|_\Lt \|\Delta_\nu((a-T_a^m)\partial_{x_j} v_\mu(t,\cdot))\|_\Lt \\
&& \qquad \leq \sum\limits_{\nu \geq 0}\sum\limits_{\mu \geq 0}  2^{-(s+\alpha t)(\nu-\mu)} 2^\nu \|\partial_t v_\nu(t,\cdot)\|_\Lt \|\Delta_\nu\Omega_1\partial_{x_j} v_\mu(t,\cdot)\|_\Lt \\
&& \qquad \qquad + \sum\limits_{\nu \geq 0}\sum\limits_{\mu \geq 0}  2^{-(s+\alpha t)(\nu-\mu)} 2^\nu \|\partial_t v_\nu(t,\cdot)\|_\Lt \|\Delta_\nu\Omega_2\partial_{x_j} v_\mu(t,\cdot)\|_\Lt.
\end{eqnarray*} Since $w(t,\cdot) \in H^{1}(\R^n_x)$ we have $\partial_x v_\mu \in H^{-s}(\R^n_x)$ and, taking an $s' \in (0,s)$, also $\partial_x v_\mu(t,\cdot) \in H^{-s'}(\R_x^n)$. By Lemma \ref{lem:Omega1}, we then get \begin{equation*}
\|\Delta_\nu\Omega_1\partial_{x_j} v_\mu(t,\cdot)\|_\Lt \leq C c_\nu^{(\mu)} 2^{-(1-s')\nu} 2^\mu \|v_\mu(t,\cdot)\|_{L^2}
\end{equation*} and therefore \begin{eqnarray*}
&& \sum\limits_{\nu \geq 0}\sum\limits_{\mu \geq 0}  2^{-(s+\alpha t)(\nu-\mu)} 2^\nu \|\partial_t v_\nu(t,\cdot)\|_\Lt \|\Delta_\nu\Omega_1\partial_{x_j} v_\mu(t,\cdot)\|_\Lt \\
&& \qquad \leq C \sum\limits_{\nu \geq 0} \sum\limits_{\mu \leq \nu}  2^{-(s+\alpha t)(\nu-\mu)} 2^\nu \|\partial_t v_\nu(t,\cdot)\|_\Lt
c_\nu^{(\mu)} 2^{-(1-s')\nu} 2^\mu 2^{-s' \mu}\|v_\mu(t,\cdot)\|_\Lt \\
&& \qquad \leq C \sum\limits_{\nu \geq 0} \sum\limits_{\mu \leq \nu} 2^{-s\alpha t \nu}2^{s\alpha t \mu} \left( 2^{(s'-s)(\nu-\mu)}\|\partial_t v_\nu(t,\cdot)\|_\Lt \right) \left( c_\nu^{(\mu)} 2^\mu \|v_\mu(t,\cdot)\|_\Lt \right)\\
&& \qquad \leq \frac{1}{N} \sum\limits_{\nu \geq 0} \sum\limits_{\mu \leq \nu}  2^{-2(s-s')(\nu-\mu)}\|\partial_t v_\nu(t,\cdot)\|_\Lt^2 + C N \sum\limits_{\nu \geq 0} \sum\limits_{\mu \leq \nu}  (c_\nu^{(\mu)})^2 2^{2\mu} \|v_\mu(t,\cdot)\|_{L^2}^2 \\
&& \qquad \leq \frac{1}{N} \sum\limits_{\nu \geq 0} \Big( \sum\limits_{\mu \leq \nu} 2^{2(s-s')\mu} \Big)2^{-2(s-s')\nu} \|\partial_t v_\nu(t,\cdot)\|_\Lt^2 \\
&& \qquad \qquad + C N \sum\limits_{\mu \geq 0} \Big( \sum\limits_{\nu \geq 0}  (c_\nu^{(\mu)})^2 \Big) 2^{2\mu} \|v_\mu(t,\cdot)\|_{L^2}^2 \\
&& \qquad = \frac{1}{N}  \sum\limits_{\nu \geq 0} \frac{2^{2(s-s')(\nu+1)}-1}{2^{2(s-s')}-1} 2^{-2(s-s')\nu}\| \partial_t v_\nu(t,\cdot)\|_\Lt^2 + C N \sum\limits_{\mu \geq 0} 2^{2\mu}\| v_\mu(t,\cdot)\|_\Lt^2 \\
&& \qquad \leq \frac{1}{N}  \frac{2^{2(s-s')}}{2^{2(s-s')}-1} \sum\limits_{\nu \geq 0} \| \partial_t v_\nu(t,\cdot)\|_\Lt^2 + CN \sum\limits_{\mu \geq 0} 2^{2\mu}\|v_\mu(t,\cdot)\|_\Lt^2.
\end{eqnarray*} By the summation formula of the geometric sum and the integral criterion, we obtain \begin{equation*}
\frac{2^{2(s-s')}}{2^{2(s-s')}-1} \leq \frac{2^{2(s-s')}}{2^{2(s-s')}(1-2^{-2(s-s')})} \leq \frac{C}{s-s'}
\end{equation*} and, hence, \begin{eqnarray*}
&&\sum\limits_{\nu \geq 0}\sum\limits_{\mu \geq 0}  2^{-(s+\alpha t)(\nu-\mu)} 2^\nu \|\partial_t v_\nu(t,\cdot)\|_\Lt \|\Delta_\nu\Omega_1\partial_x v_\mu(t,\cdot)\|_\Lt \\
&& \qquad \qquad  \leq \frac{1}{N}  \frac{C}{s-s'} \sum\limits_{\nu \geq 0} \| \partial_t v_\nu(t,\cdot)\|_\Lt^2 + CN \sum\limits_{\mu \geq 0} 2^{2\mu}\|v_\mu(t,\cdot)\|_\Lt^2.
\end{eqnarray*} 
On the other hand, we have from Lemma \ref{lem:Omega2} that 
\begin{equation*}
\|\Delta_\nu\Omega_2\partial_x v_\mu(t,\cdot)\|_\Lt \leq C \tilde c_\nu^{(\mu)}  \|v_\mu(t,\cdot)\|_\Lt,
\end{equation*} 
and therefore we get 
\begin{eqnarray*}
&& \sum\limits_{\nu \geq 0}\sum\limits_{\mu \geq 0} 2^{-(s+\alpha t)(\nu-\mu)} 2^\nu \|\partial_t v_\nu(t,\cdot)\|_\Lt \|\Delta_\nu\Omega_2\partial_x v_\mu(t,\cdot)\|_\Lt \\
&& \qquad \leq C \sum\limits_{\nu \geq 0} \sum\limits_{\mu \geq \nu-4}  2^{-(s+\alpha t)(\nu-\mu)} 2^\nu \|\partial_t v_\nu(t,\cdot)\|_\Lt \|\Delta_\nu\Omega_2\partial_x v_\mu(t,\cdot)\|_\Lt \\
&& \qquad \leq C \sum\limits_{\nu \geq 0} \sum\limits_{\mu \geq \nu-4} 2^{-(s+\alpha t)(\nu-\mu)} 2^{\nu} \|\partial_t v_\nu(t,\cdot)\|_\Lt \tilde{c}_\nu^{(\mu)} 2^{-\mu}2^{\mu} \|v_\mu(t,\cdot)\|_\Lt \\
&& \qquad \leq C\sum\limits_{\nu \geq 0} \sum\limits_{\mu \geq \nu-4} 2^{(1-s-\alpha t)\nu} 2^{-(1-s-\alpha t)\mu} \tilde{c}_\nu^{(\mu)} \|\partial_t v_\nu(t,\cdot)\|_\Lt 2^\mu \|v_\mu(t,\cdot)\|_\Lt \\
&& \qquad \leq \frac{1}{N}\sum\limits_{\nu \geq 0} \sum\limits_{\mu \geq \nu-4} 2^{2(1-s-\alpha t)\nu} 2^{-2(1-s-\alpha t)\mu} \|\partial_t v_\nu(t,\cdot)\|_\Lt^2 \\
&& \qquad \qquad + CN \sum\limits_{\nu \geq 0} \sum\limits_{\mu \geq \nu-4} (c_\nu^{(\mu)})^2 2^{2\mu} \|v_\mu(t,\cdot)\|_\Lt^2 \\
&& \qquad \leq \frac{1}{N} \frac{2^{8(1-s-\alpha t)}}{1-2^{-2(1-s-\alpha t)}} \sum\limits_{\nu \geq 0} \|\partial_t v_\nu(t,\cdot)\|_\Lt^2 \\
&& \qquad \qquad + CN \sum\limits_{\mu \geq 0} \left( \sum\limits_{\nu \leq \mu +4} (c_\nu^{(\mu)})^2 \right) 2^{2\mu} \|v_\mu(t,\cdot)\|_\Lt^2.
\end{eqnarray*} Since $t \in [0,\frac{7}{8}\sigma]$, where $\sigma := \frac{1-s}{\alpha}$, we have $\frac{1}{8}(1-s) \leq 1-s-\alpha t \leq 1-s$ and hence \begin{equation*}
\frac{2^{8(1-s-\alpha t)}}{1-2^{-2(1-s-\alpha t)}} \leq \frac{C}{1-s-\alpha t} \leq \frac{C}{1-s}.
\end{equation*} From that, we get \begin{eqnarray*}
&& \sum\limits_{\nu \geq 0}\sum\limits_{\mu \geq 0} 2^{-(s+\alpha t)(\nu-\mu)} 2^\nu \|\partial_t v_\nu(t,\cdot)\|_\Lt \|\Delta_\nu\Omega_2\partial_x v_\mu(t,\cdot)\|_\Lt \\
&& \qquad \qquad \leq \frac{1}{N}\frac{C}{1-s} \sum\limits_{\nu \geq 0} \|\partial_t v_\nu(t,\cdot)\|_\Lt^2 + C N \sum\limits_{\mu \geq 0} 2^{2\mu}\|v_\mu(t,\cdot)\|_\Lt.
\end{eqnarray*} This concludes the proof of the proposition. \end{proof}

\begin{proof}[Proof of Lemma \ref{lemmacomm}] 
The proof is very similar to that of \cite{DSP2012}[Prop. 3.7]. We detail it for the reader's convenience. We have \begin{equation*}
[\Dn,T_a^m]w = [\Dn,S_{m-1}a]S_{m+1}w +  \sum\limits_{k \geq m+2} [\Dn, S_{k-3}a]\Delta_k w
\end{equation*} and get \begin{eqnarray*}
&& \partial_{x_j}[\Dn,T_a^m]\partial_{x_h}w = \partial_{x_j}([\Dn,S_{m-1}a]S_{m+1}(\partial_{x_h}w)) \\
&& \qquad \qquad \qquad \qquad \qquad  + \partial_{x_j}\Big( \sum\limits_{k \geq m+2} [\Dn,S_{k-3}a]\Delta_k(\partial_{x_h}w) \Big)
\end{eqnarray*} since $\Dn$ and $\Delta_k$ commute and we therefore have that \begin{eqnarray*}
&& [\Dn,S_{m-1}a]S_{m+1}w = \Dn(S_{m-1}aS_{m+1}w)-S_{m-1}aS_{m+1}(\Dn w) \\
&& \qquad \qquad = \Dn(S_{m-1}aS_{m+1}w) - S_{m-1}a\Dn(S_{m+1}w).
\end{eqnarray*} This holds analogously for $[\Dn, S_{k-3}a]\Delta_k w$. Let us consider \begin{equation*}
\partial_{x_j}([\Dn,S_{m-1}a]S_{m+1}(\partial_{x_h}w)) = \partial_{x_j}([\Dn,S_{m-1}a]\partial_{x_h}(S_{m+1}w)).
\end{equation*} Looking at the spectrum of this term  we see that the term equals to $0$ if $\nu \geq m+4$. Moreover, the spectrum is contained in $\{|\xi| \leq 2^{m+3}\}$. From Bernstein's inequality, we have that \begin{equation*}
\|\partial_{x_j}([\Dn,S_{m-1}a]S_{m+1}(\partial_{x_h}w))\|_\Lt \leq 2^{m+3} \|[\Dn,S_{m-1}a]S_{m+1}(\partial_{x_h}w)\|_\Lt.
\end{equation*} From the well known result of Coifman and Meyer  \cite[Th. 35]{CM1978}, which essentially say that
\begin{equation} \label{eq:CM}
\|[\Dn,{b}]\partial_{x}w\|_{\Lt} \leq C \|\nabla_x b\|_{L^\infty} \|w\|_{\Lt},
\end{equation} where $b \in \Lip(\R^n_x)$ and $w \in H^{1}(\R^n_x)$, we get \begin{equation*}
\|[\Dn,S_{m-1}a]\partial_{x_h}(S_{m+1}w)\|_\Lt \leq C\|a\|_{\Lip}\|S_{m+1}w\|_\Lt.
\end{equation*} Further, we have \begin{equation*}
\|S_{m+1}w\|_\Lt \leq \sum\limits_{k \leq m+1} \|\Delta_k w\|_{L^2} \leq C \sum\limits_{k \leq m+1} 2^{-(1-s-\alpha t)} \eps_k,
\end{equation*} where $\{\eps_k\}_{k \in \N_0} \in l^2(\N_0)$ with $\|\{\eps_k\}_k\|_{l^2} \approx \|w\|_{H^{1-s-\alpha t}}$. Using now H\"older's inequality, we obtain \begin{equation*}
\|S_{m+1}w\|_\Lt \leq C \left( \sum\limits_{k \geq 0} 2^{-2(1-s-\alpha t)} \right)^\frac{1}{2} \|w\|_{H^{1-s-\alpha t}} \leq \frac{C}{1-s} \|w\|_{H^{1-s-\alpha t}},
\end{equation*} where we used the summation formula for the geometric sum as well as the assumption that $t \in [0,\frac{7}{8}\sigma]$, $\sigma : = \frac{1-s}{\alpha}$. Consequently, \begin{equation*}
\|\partial_{x_j}([\Dn,S_{m-1}a]S_{m+1}(\partial_{x_h}w))\|_\Lt \leq \frac{C}{1-s} \|a\|_{\Lip}\|w\|_{H^{1-s-\alpha t}}
\end{equation*} and, \begin{eqnarray}
\label{est4} && \sum\limits_{\nu \geq 0} 2^{-2(s+\alpha t)\nu} \big\|\partial_{x_j}[\Dn,S_{k-3}a]\Delta_k(\partial_{x_h}w) \big\|_\Lt^2 \\
\nonumber && \quad = \sum\limits_{0 \leq \nu \leq m+3} 2^{-2(s+\alpha t)\nu} \big\|\partial_{x_j}[\Dn,S_{k-3}a]\Delta_k(\partial_{x_h}w) \big\|_\Lt^2 \leq \frac{C_{m}}{(1-s)^2} \|a\|_{\Lip}^2 \|w\|_{H^{1-s-\alpha t}}^2.
\end{eqnarray} Now, we consider \begin{equation*}
\partial_{x_j}\Big( \sum\limits_{k \geq m+2} [\Dn, S_{k-3}a]\Delta_k (\partial_{x_h}w) \Big) = \partial_{x_j}\Big( \sum\limits_{k \geq m+2} [\Dn, S_{k-3}a]\partial_{x_h}(\Delta_k w) \Big).
\end{equation*} Looking at $\spec([\Dn, S_{k-3}a]\Delta_k (\partial_{x_h}w))$, we see that $[\Dn, S_{k-3}a]\Delta_k (\partial_{x_h}w)$ is identically $0$ if $|k-\nu| \geq 4$. This means that the sum runs over at most seven terms: from $\partial_{x_j}[\Dn, S_{\nu-6}a]\partial_{x_h}(\Delta_{\nu-3}w)$ up to $\partial_{x_j}[\Dn, S_{\nu}a]\partial_{x_h}(\Delta_{\nu+3}w)$, where each of them has a spectrum contained in a ball $\{|\xi| \leq C 2^{\nu}\}$. We consider only one of these terms, e.g. $\partial_{x_j}[\Dn, S_{\nu-6}a]\partial_{x_h}(\Delta_{\nu-3}w)$ since the estimates for the others follow analogously. From Bernstein's inequality we get \begin{equation*}
\|\partial_{x_j}[\Dn, S_{\nu-6}a]\partial_{x_h}(\Delta_{\nu-3}w)\|_\Lt \leq C 2^{\nu}\|[\Dn, S_{\nu-6}a]\partial_{x_h}(\Delta_{\nu-3}w)\|_\Lt
\end{equation*} and, using again \eqref{eq:CM}, \begin{equation*}
\|[\Dn, S_{\nu-6}a]\partial_{x_h}(\Delta_{\nu-3}w)\|_\Lt \leq C\|a\|_{\Lip} \|\Dn w\|_\Lt.
\end{equation*} Hence, we have \begin{equation*}
\|\partial_{x_j}[\Dn, S_{\nu-6}a]\partial_{x_h}(\Delta_{\nu-3}w)\|_\Lt \leq C 2^\nu \|a\|_{\Lip} \|\Dn w\|_\Lt.
\end{equation*} Thus squaring, multiplying by $2^{-2(s+\alpha t)\nu}$ and summing over $\nu$, we get \begin{eqnarray*}
&& \sum\limits_{\nu \geq 0} 2^{-2(s+\alpha t)\nu}\|\partial_{x_j}[\Dn,S_{\nu-3}]\partial_{x_h}(\Dn u)\|_\Lt^2 \\
&& \qquad \qquad \qquad \leq C \|a\|_{\Lip}^2 \sum\limits_{\nu \geq 0} 2^{2(1-s-\alpha t)}\|\Delta_\nu w\|_\Lt^2.
\end{eqnarray*} With $w \in H^{1-s-\alpha t}(\R^n_x)$ and using Proposition \ref{RevSobolevLP}, we finally get \begin{equation*}
\sum\limits_{\nu \geq 0} 2^{-2(s+\alpha t)\nu}\|\partial_{x_j}[\Dn,S_{\nu-3}]\partial_{x_h}(\Dn u)\|_\Lt^2 \leq C \|a\|_{\Lip}^2 \|w\|_{H^{1-s-\alpha t}}^2.
\end{equation*} As already mentioned, the other terms can be treated the same way. We finally get \begin{equation} \label{est5}
\sum\limits_{\nu \geq 0} 2^{-2(s+\alpha t)\nu}\Big\|\partial_{x_j}\big( \sum\limits_{k \geq m+2}[\Dn,S_{k-3}]\partial_{x_h}(\Delta_k u)\big) \Big\|_\Lt^2 \leq C \|a\|_{\Lip}^2\|u\|_{H^{1-s-\alpha t}}^2
\end{equation} which, putting \eqref{est4} and \eqref{est5} together and using the notation $v_\nu = 2^{-(s + \alpha t)\nu}w_\nu$, concludes the proof of the proposition. \end{proof}


\section*{Literature}

\end{document}